\DeclareMathAlphabet{\mathpzc}{OT1}{pzc}{m}{it}
\newcommand{\Newt}[1]{\operatorname{\mathpzc{Newt}}\left(#1\right)}
\NewDocumentCommand{\definealphabet}{mmmm}
 {%
  \int_step_inline:nnn { `#3 } { `#4 }
   {
    \cs_new_protected:cpx { #1 \char_generate:nn { ##1 }{ 11 } }
     {
      \exp_not:N #2 { \char_generate:nn { ##1 } { 11 } }
     }
   }
 }
\newcommand{\Supp}[1]{\mathrm{Supp}{\left(#1\right)}}
\newcommand{\zetax}{\zeta_{2(p-1)}}
\DeclareRobustCommand{\stirling}{\genfrac\{\}{0pt}{}}
\newcommand{\newerr}{O\left(p^{1+\frac{2}{p-1}}\right)}
\Crefname{conjecture}{Conjecture}{Conjectures}
\Crefname{lemma}{Lemma}{Lemmas}
\Crefname{definition}{Definition}{Definitions}
\Crefname{remark}{Remark}{Remarks}
\Crefname{proposition}{Proposition}{Propositions}
\Crefname{corollary}{Corollary}{Corollarys}
\Crefname{equation}{}{}
\Crefname{item}{}{}
\Crefname{algorithm}{Algorithm}{Algorithms}
\newtheorem{theorem}{Theorem}[section]
\newtheorem*{theorem*}{Theorem}
\newtheorem{lemma}[theorem]{Lemma}
\newtheorem{remark}[theorem]{Remark}
\newtheorem{proposition}[theorem]{Proposition}
\newtheorem*{proposition*}{Proposition}
\newtheorem{definition}[theorem]{Definition}
\newtheorem{corollary}[theorem]{Corollary}
\numberwithin{equation}{section}
\numberwithin{figure}{section}
\algnewcommand\algorithmicinput{\textbf{INPUT:}}
\algnewcommand\INPUT{\item[\algorithmicinput]}
\algnewcommand\algorithmicoutput{\textbf{OUTPUT:}}
\algnewcommand\OUTPUT{\item[\algorithmicoutput]}
\newlist{propenum}{enumerate}{1}
\setlist[propenum]{label=(\arabic*), ref=\theproposition~(\arabic*)}
\newlist{lemenum}{enumerate}{1}
\setlist[lemenum]{label=(\arabic*), ref=\thelemma~(\arabic*)}
\newlist{thmenum}{enumerate}{1}
\setlist[thmenum]{label=(\arabic*), ref=\thetheorem~(\arabic*)}
\newcommand{\subalign}[1]{%
  \vcenter{%
    \Let@ \restore@math@cr \default@tag
    \baselineskip\fontdimen10 \scriptfont\tw@
    \advance\baselineskip\fontdimen12 \scriptfont\tw@
    \lineskip\thr@@\fontdimen8 \scriptfont\thr@@
    \lineskiplimit\lineskip
    \ialign{\hfil$\m@th\scriptstyle##$&$\m@th\scriptstyle{}##$\hfil\crcr
      #1\crcr
    }%
  }%
}
\title[Truncated expansion of $\zeta_{p^n}$ in the $p$-adic Mal'cev-Neumann field]{Truncated expansion of $\zeta_{p^n}$ in the $p$-adic Mal'cev-Neumann field}
\author{Shanwen Wang}
\address{School of Mathematics, Renmin University of China\\ No. 59 Zhongguancun Street, Haidian District\\
    Beijing, 100872, China}
\email{s\_wang@ruc.edu.cn}
\thanks{Shanwen Wang is supported by the Fundamental Research Funds for the Central Universities, the Research Funds of Renmin University of China No.2020030251 and The National Natural Science Foundation of China (Grant No.11971035).}
\author{Yijun Yuan\orcidlink{0000-0001-6571-6980}}
\address{Yau Mathematical Sciences Center, Tsinghua University, Haidian District\\
    Beijing, 100084, China}
\email{941201yuan@gmail.com}
\date{}
\subjclass[2020]{Primary 11S05; Secondary 11Y40, 11P83, 05A10, 41A58}
\keywords{Harmonic number identity, Transfinite Newton algorithm, $p$-adic Mal'cev-Neumann field}
\begin{document}

\begin{abstract}
    Fix an odd prime $p$. In this article, we provide a $\mathrm{mod}\  p$  harmonic number identity, which appears naturally in the canonical expansion of a root $\zeta_{p^n}$ of the $p^n$-th cyclotomic polynomial $\Phi_{p^n}(T)$ in the $p$-adic Mal'cev-Neumann field $\bbL_p$.  We establish a $\frac{2}{(p-1)p^{n-2}}$-truncated expansion of $\zeta_{p^n}$ via a variant of the transfinite Newton algorithm, which gives the first $\aleph_0^2$ terms of the canonical expansion of $\zeta_{p^n}$. The harmonic number identity simplifies the expression of this expansion.
\end{abstract}

\maketitle

\tableofcontents

\section{Introduction}
Fix an odd prime $p$. Let $\Phi_{p^n}(T)$ be the $p^n$-th cyclotomic polynomial. The harmonic number $H_k$ is the special value of the partial sum $\sum_{n=1}^{k}\frac{1}{n^s}$ of the Riemann $\zeta$ series at $s=1$. It serves as a bridge between the theory of combinatorics and number theory.  In this article, we provide an instant of these phenomenons. More precisely, on one hand, we provide a $\mathrm{mod}\  p$  harmonic number identity using the $r$-restricted Stirling number of the second kind, which also encodes the congruence properites of the $r$-restricted Stirling numbers of the second kind; on the other hand, this harmonic number identity appears naturally in the truncated expansion of $\zeta_{p^n}$ in the $p$-adic Mal'cev-Neumann field $\bbL_p$, and we use it to establish the $\frac{2}{p^{n-2}(p-1)}$-truncated expansion of $\zeta_{p^n}$, which will be used to construct the uniformizer of the false-Tate curve extension of $\bbQ_p$ in \cite{WYInPre}.

\subsection{A $\mathrm{mod}\  p$ identity for harmonic numbers}\label{intro1}
The Bell polynomials are used to study set partitions in combinatorial mathematics. Let $\alpha_l = (j_1, j_2, \cdots, j_l)\in \bbN^{l}$ be a multi-index. We denote its norm by $\vert\alpha_l\vert=j_1 + j_2 + \cdots + j_l$ and its factorial by
$\alpha_l! = \prod_{k=1}^l j_k!$. Let $\bm{x}=(x_1,\cdots, x_l)$ be a $l$-tuple of formal variables. The power of a multi-index $\alpha_l$ of $\bm{x}$ is defined by
$$ \bm{x}^{\alpha_l}:= \prod_{i=1}^lx_i^{j_i}.$$

\begin{definition}
    For integer numbers $n\geq k\geq 0$, the \textbf{incomplete exponential Bell polynomial} $B_{n,k}(x_{1},x_{2},\dots ,x_{n-k+1})$ with parameter $(n,k)$ is a polynomial given by the sum
    \begin{align*}
        \sum_{\substack{\alpha_{n-k+1}=(j_1,\cdots,j_{n-k+1})\in\bbN^{n-k+1} \\\vert\alpha_{n-k+1}\vert=k, \sum_{i=1}^{n-k+1}ij_i=n}} \frac{n!}{\alpha_{n-k+1}!}\left(\frac{x_1}{1!},\cdots, \frac{x_{n-k+1}}{(n-k+1)!}\right)^{\alpha_{n-k+1}}.
    \end{align*}

\end{definition}
With multinomial theorem, the incomplete exponential Bell polynomial can also be defined in terms of its generating function (cf. \cite[P.134 Theorem A]{Comtet1974}):
\begin{equation*}
    \frac{1}{k!}\left(\sum_{m\geq 1}x_m\frac{t^m}{m!}\right)^k=\sum_{n\geq k}B_{n,k}(x_1,\cdots,x_{n-k+1})\frac{t^n}{n!},\ k=0,1,2,\cdots .
\end{equation*}

The special values of the incomplete exponential Bell polynomial at the points $(1,\cdots, 1)$ and $(\overbrace{1,\cdots,1}^{r},0,\cdots,0),$ called Stirling numbers of the second kind and $r$-restricted Stirling numbers of the second kind (cf. \cite{Komatsu2016,Mezo2014}) respectively. More precisely, we have the following definition:
\begin{definition}\leavevmode
    \begin{enumerate}
        \item For integer numbers $n\geq k\geq 0$, the \textbf{Stirling number of the second kind} is defined by
              $$\stirling{n}{k}= B_{n,k}(1,1,\cdots,1) ;$$
        \item For integer numbers $n\geq k\geq 0$ and positive integer $r$, the \textbf{$r$-restricted Stirling number of the second kind} is defined by
              $$\stirling{n}{k}_{\leq r}=
                  \begin{cases}
                      \stirling{n}{k},                                & \text{ if } n-k+1\leq r; \\
                      B_{n,k}(\overbrace{1,\cdots,1}^{r},0,\cdots,0), & \text{ otherwise.}
                  \end{cases}$$
    \end{enumerate}
\end{definition}
Our first result is the following $\mathrm{mod}\  p$ harmonic number identity, which is a translation of the congruence property of the $r$-restricted Stirling numbers of the second kind (cf. \Cref{lem:35904}):
\begin{theorem*}[cf. \Cref{maintheorm1}]
    For an odd prime $p$ and an integer $1\leq k\leq p-1 $, we have
    $$\sum_{i=1}^k\frac{1}{(k-i)!}\left(\sum_{m=1}^p\frac{(p-1)!}{(p-m)!(i+p)!}\stirling{i+p}{m}_{\leq p-1}\right)-\frac{1}{(k-1)!}\equiv -\frac{1}{k!}H_k \bmod{p},$$
    where $H_k$ is the harmonic number.
\end{theorem*}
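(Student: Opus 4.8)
The plan is to collapse the double sum into a single power-series coefficient, evaluate it, and finish with elementary binomial cancellations. Throughout, the hypothesis $1\le k\le p-1$ is what keeps the factorials $(k-i)!$, $i!$, $l!$ (all with argument $<p$) units in $\bbZ_{(p)}$, keeps $H_l\in\bbZ_{(p)}$ for $l\le k$, and forces $v_p((p+l)!)=1$ for $0\le l\le i$.

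\emph{Reduction to a coefficient of $e_{p-1}(t)^p$.} Set $E(t)=\sum_{j=1}^{p-1}t^j/j!=e_{p-1}(t)-1$, where $e_{p-1}(t)=\sum_{j=0}^{p-1}t^j/j!$. Specializing the generating function for the incomplete Bell polynomials with $x_j=1$ for $1\le j\le p-1$ and $x_j=0$ otherwise gives $\frac1{m!}E(t)^m=\sum_{n\ge m}\stirling nm_{\le p-1}\frac{t^n}{n!}$, hence $m!\,\stirling{i+p}m_{\le p-1}/(i+p)!=[t^{i+p}]E(t)^m$. Since $\frac{(p-1)!}{(p-m)!\,m!}=\frac1p\binom pm$, the inner sum $S_i$ in the statement equals $\frac1p[t^{i+p}]\sum_{m=1}^{p}\binom pm E(t)^m=\frac1p[t^{i+p}]\bigl((1+E(t))^p-1\bigr)=\frac1p[t^{i+p}]e_{p-1}(t)^p$, the last step because $i+p\ge1$. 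Now write $e_{p-1}(t)=e^t-R(t)$ with $R(t)=\sum_{j\ge p}t^j/j!$ and expand $e_{p-1}(t)^p=\sum_{s=0}^{p}\binom ps e^{(p-s)t}(-R(t))^s$. For $1\le i\le p-1$ the target degree $i+p$ lies in $[p+1,2p-1]$, so only $s=0$ (contributing $e^{pt}$) and $s=1$ survive, because $R(t)^s$ has order $sp\ge 2p$ for $s\ge 2$. Extracting $[t^{i+p}]$ from the $s=0,1$ terms and reindexing the $s=1$ sum by $l=j-p$ gives
\[
S_i=\frac{p^{i+p-1}}{(i+p)!}-\sum_{l=0}^{i}\frac{(p-1)^{i-l}}{(p+l)!\,(i-l)!}.
\]

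\emph{The crux: reducing $S_i$ modulo $p$.} This is the hard part: each summand above has $p$-adic valuation $-1$, so a priori $S_i\in p^{-1}\bbZ_{(p)}$ and one must see the pole cancel. I would factor $(p+l)!=p\,(p-1)!\prod_{s=1}^{l}(p+s)$ and expand modulo $p^2$, using $\prod_{s=1}^{l}(p+s)\equiv l!\,(1+pH_l)$ — this is exactly where the harmonic number enters, via $\prod_{s=1}^{l}(1+p/s)\equiv 1+pH_l$ — together with $(p-1)^{i-l}\equiv(-1)^{i-l}(1-(i-l)p)$ and Wilson's congruence $(p-1)!\equiv-1$. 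The part constant in $p$ of the resulting sum is $\frac1{i!}\sum_{l=0}^{i}\binom il(-1)^{i-l}=0$ for $i\ge1$, which simultaneously confirms $S_i\in\bbZ_{(p)}$ and yields $S_i\equiv-\sum_{l=0}^{i}\frac{(-1)^{i-l}(i-l+H_l)}{l!\,(i-l)!}\bmod p$ (the term $p^{i+p-1}/(i+p)!$ drops, its valuation being $i+p-2\ge1$). Splitting off the $(i-l)$-part, which telescopes to $-\bbone_{i=1}$ by the same binomial identity applied to $i-1$, leaves
\[
S_i\equiv\bbone_{i=1}-\frac1{i!}\sum_{l=1}^{i}\binom il(-1)^{i-l}H_l\bmod p.
\]

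\emph{Conclusion.} Substitute this into $L:=\sum_{i=1}^{k}\frac1{(k-i)!}S_i-\frac1{(k-1)!}$. The $\bbone_{i=1}$ term contributes precisely $\frac1{(k-1)!}$, cancelling the subtracted term, so after interchanging the order of summation $L\equiv-\sum_{l=1}^{k}\frac{H_l}{l!}\sum_{i=l}^{k}\frac{(-1)^{i-l}}{(k-i)!\,(i-l)!}\bmod p$. The inner sum equals $\frac{(1-1)^{k-l}}{(k-l)!}$, which vanishes unless $l=k$, where it is $1$; hence $L\equiv-\frac{H_k}{k!}\bmod p$, as claimed. The only genuine obstacle is the $\bmod\ p^2$ bookkeeping in the crux step — seeing the apparent pole cancel and pulling $H_l$ out of $\prod_{s=1}^{l}(p+s)$; everything else is the Bell-polynomial generating function and repeated use of $\sum_{j}(-1)^{j}\binom nj=0$.
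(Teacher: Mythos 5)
Your proof is correct, and it takes a genuinely different route from the paper's. The paper first establishes the closed form $\sum_{m=1}^p\frac{(p-1)!}{(p-m)!(i+p)!}\stirling{i+p}{m}_{\leq p-1}\equiv \frac{(-1)^i}{i!\,i}$ (and $0$ for $i=1$) as a standalone congruence (\Cref{lem:35904}); this in turn rests on showing $p\mid\stirling{i+p}{m}_{\leq p-1}$, on a $\bmod\ p^2$ evaluation of $\sum_{m}(-1)^{m-1}(m-1)!\stirling{i+p}{m}_{\leq p-1}$ obtained by inverting the complete Bell polynomial map on the sequence $(\delta_j)$ together with Riordan's inverse relations, and on a tail estimate using a Stirling congruence of S\'anchez-Peregrino; the theorem then follows from the known identity $\sum_{i=1}^k\binom{k}{i}\frac{(-1)^i}{i}=-H_k$. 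You instead recognize the entire inner sum as $\frac1p[t^{i+p}]e_{p-1}(t)^p$, compute that coefficient exactly via $e_{p-1}=e^t-R$ (only $s=0,1$ surviving in degree $\leq 2p-1$), and let the harmonic numbers emerge from $\prod_{s=1}^{l}(1+p/s)\equiv 1+pH_l \bmod p^2$; the apparent $1/p$ pole cancels because $\sum_l(-1)^{i-l}\binom il=0$, which also sidesteps any need for the Wilson quotient since only the residue of $(p-1)!$ mod $p$ enters the surviving term. Notably you never pass through the closed form $\frac{(-1)^i}{i!\,i}$ (equivalently, the identity $\sum_{l=1}^i(-1)^l\binom il H_l=-\frac1i$): you carry the $H_l$'s along and collapse them at the end with a Vandermonde convolution. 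Your argument is shorter and more self-contained for this theorem; the paper's heavier intermediate machinery is justified by the fact that \Cref{lem:35904} and the Bell-inversion congruence are reused later (e.g.\ in \Cref{lem:43810}), which your route would not supply.
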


\subsection{Truncated expansion of $\zeta_{p^2}$ in the $p$-adic Mal'cev-Neumann field}
\subsubsection{The $p$-adic Mal'cev-Neumann field }
Let $\calO_{\breve{\bbQ}_p}=W(\bar{\bbF}_p)$ be the ring of Witt vectors over $\bar{\bbF}_p$ and let $\bbL_p$ be the $p$-adic Mal'cev-Neumann field $\calO_{\breve{\bbQ}_p}((p^{\bbQ}))$ (cf. \cite[Section 4]{Poonen1993}).

Every element $\alpha$ of $\bbL_p$ can be uniquely written as
\begin{equation}\label{expansioncan}\sum_{x\in \bbQ}[\alpha_x]p^x, \text{ where } [\cdot]: \bar{\bbF}_p\rightarrow W(\bar{\bbF}_p) \text{ is the Teichmuller character.}\end{equation}
For any $\alpha=\sum_{x\in \bbQ}[\alpha_x]p^x\in \bbL_p$, we set $ \mathrm{Supp}(\alpha)=\{x\in \bbQ:  \alpha_x\neq 0\}$, which is well-orderd by the definition of $\bbL_p$.
Thus, we can define the $p$-adic valuation $v_p$ by the formula: $$v_p(\alpha)=\begin{cases} \inf \mathrm{Supp}(\alpha), & \text{ if } \alpha\neq 0; \\ \infty, & \text{if } \alpha=0.\end{cases} $$
The field $\bbL_p$ is complete for the $p$-adic topology and it is also algebraically closed. Moreover, it is the maximal complete immediate extension\footnote{A valued field extension $(E,w)$ of  $(F,v)$ is an immediate extension, if  $(E,w)$ and $(F,v)$ have the same residue field. A valued field  $(E,w)$  is maximally complete if it has no immediate extensions other than $(F,v)$ itself.} of $\overline{\bbQ}_p$. The field $\bbL_p$ is spherical complete\footnote{A valued field is said to be spherical complete, if the intersection of every decreasing sequence of closed balls is nonempty.}, and the field $\bbC_p$ of $p$-adic complex numbers is not spherical complete, which can be continuously embedded into $\bbL_p$.
\begin{definition}\leavevmode
    \begin{enumerate}
        \item For any $\alpha\in \bbL_p$, we call the unique expression \Cref{expansioncan} of $\alpha$, the \textbf{canonical expansion} of $\alpha$.
        \item Let $r\in \bbQ$ and $\alpha\in \bbL_p$, we rewrite the canonical expansion of $\alpha$ in the following way
              \[\alpha=\sum_{x\in \bbQ, x< r  }[\alpha_x]p^x+ O(p^r),\]
              and we call this expression the $r$-\textbf{truncated canonical expansion} of $\alpha$.
        \item If $\sum_{x\in \bbQ, x< r  }\beta_xp^x+ O(p^r)$ is another element in $\bbL_p$ with $\beta_x\in \calO_{\breve{\bbQ}_p}$ such that
              \[\sum_{x\in \bbQ, x< r  }\beta_xp^x\equiv \alpha \bmod{p^r},\] then we call $\sum_{x\in \bbQ, x< r  }\beta_xp^x+ O(p^r)$ a $r$-\textbf{truncated expansion} of $\alpha$.
    \end{enumerate}
\end{definition}

Given $\alpha\in \bbL_p$, for $x\in\bbQ$, we denote the coefficient of $p^x$ in the expansion of $\alpha$ by $[C_x(\alpha)]\in \calO_{ \breve{\bbQ}_p}$. Then $C_x(\alpha)$ is the coefficient of $p^x$ modulo $p$. This gives a map
$$ C: \bbQ\times\bbL_p\rightarrow  \bar{\bbF}_p;  (x, \alpha)\mapsto C_x(\alpha) .$$

\subsubsection{Transfinite Newton algorithm}
Let $\Phi_{p^n}(T)=\sum_{k=0}^{p-1}T^{p^{n-1}k}\in\bbQ_p[T]$ be the $p^n$-th cyclotomic polynomial. In \cite{WangYuan2021}, we expanded a result of Kedlaya (cf. \cite[Proposition 1]{Kedlaya2001}) into a transfinite Newton algorithm to study the canonical expansion in $\bbL_p$ of a root of a polynomial $\Phi(T)\in \bbL_p[T]$.

Let $\Lambda_{p-1}= \sum_{k=0}^{p-1}\frac{1}{[k!]}\zetax^k p^{\frac{k}{p(p-1)}}$ and $\sigma_2= \sum_{k=2}^\infty p^{-1/p^k}$.
We have the following explicit formula for the first $\aleph_0$ terms of a root $\zeta_{p^2}$ of the $p^2$-th cyclotomic polynomial $\Phi_{p^2}(T)$, which is a special case of the \cite[Theorem 3.3]{WangYuan2021}.
\begin{theorem}
    Let \(\zeta_{p^2}^{(i)}\) be the \(i\)-th approximation of \(\zeta_{p^2}\) in the transfinite Newton algorithm. Then we have
    \[\zeta_{p^2}^{(i)}=
        \begin{cases}
            \sum_{k=0}^i \frac{1}{[k!]}\zetax^kp^{\frac{k}{p(p-1)}},                 & \text{ for } 0\leq i\leq p-1, \\
            \Lambda_{p-1}+ \sum_{l=2}^{i-p+2}\zetax p^{\frac{1}{p-1}-\frac{1}{p^l}}, & \text{ for }  i\geq p.
        \end{cases}
    \]
    In other words, $\zeta_{p^2}=\Lambda_{p-1}+\zetax p^{\frac{1}{p-1}}\sigma_2+O\left(p^{\frac{1}{p-1}}\right)$.
\end{theorem}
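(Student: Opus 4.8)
The goal is to verify the claimed closed form for the approximations $\zeta_{p^2}^{(i)}$ produced by the transfinite Newton algorithm applied to $\Phi_{p^2}(T)$, and then to read off the limit $\zeta_{p^2}=\Lambda_{p-1}+\zetax p^{1/(p-1)}\sigma_2+O(p^{1/(p-1)})$. Since the statement is advertised as a special case of \cite[Theorem 3.3]{WangYuan2021}, the natural approach is to run the algorithm directly on $\Phi_{p^2}$ and track valuations. I would proceed by strong induction on $i$, splitting into the two regimes $0\le i\le p-1$ and $i\ge p$.

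**The first regime.** For $i=0$ the algorithm starts from a root of the reduction, and since $\Phi_{p^2}(1+x)\equiv$ (a unit times) $x^{p(p-1)}$-type behaviour governs the initial Newton slope, one expects $\zeta_{p^2}^{(0)}=1=\frac{1}{[0!]}\zetax^0 p^0$. The key computation is to show that at step $i$, writing $\zeta_{p^2}^{(i)}=\sum_{k=0}^i\frac{1}{[k!]}\zetax^k p^{k/(p(p-1))}$, the value $\Phi_{p^2}(\zeta_{p^2}^{(i)})$ has $p$-adic valuation exactly $\frac{i+1}{p(p-1)}+1$ (or whatever the bookkeeping forces), so that the next Newton correction term is $\frac{1}{[(i+1)!]}\zetax^{i+1}p^{(i+1)/(p(p-1))}$. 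This should follow from the binomial/multinomial expansion of $\Phi_{p^2}(1+u)=\sum_{k=0}^{p-1}(1+u)^{p k}$ with $u=\zeta_{p^2}^{(i)}-1$: the point is that $\sum_{k=0}^{p-1}\binom{pk}{j}$ controls the coefficients, and mod $p$ one gets the exponential-generating-function pattern $\sum \frac{u^k}{k!}$, explaining the $\frac{1}{[k!]}$ factors. I would make this precise by comparing with $\frac{(1+u)^{p^2}-1}{(1+u)^p-1}$ and using that $v_p(u^{p(p-1)})$ is the relevant threshold where the slope changes.

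**The second regime and the transition.** At $i=p-1$ we have reached $\Lambda_{p-1}$, and the slope of the Newton polygon changes from $\frac{1}{p(p-1)}$ to something with denominator a power of $p$: the exponents $\frac{1}{p-1}-\frac{1}{p^l}$ appearing for $l\ge 2$ are exactly the partial sums $\sum_{k=2}^{l}(p-1)p^{-k}\cdot\frac{1}{?}$—more precisely $\frac{1}{p-1}-\frac1{p^l}=\sum_{k=2}^{l} (p-1)p^{-k}$ divided appropriately, wait, $\sum_{k=2}^{\infty}(p-1)p^{-k}=p^{-1}$, so in fact $\frac1{p-1}-\frac1{p^l}$ telescopes as $\sum_{k=2}^{l}\bigl(\frac1{p-1}\cdot\frac{?}{}\bigr)$; in any case the recursive structure $p^{-1/p^{l+1}}=\bigl(p^{-1/p^l}\bigr)^{1/p}$ is what drives the geometric-type accumulation, and the coefficient stabilises at $\zetax$. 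I would verify that once $\zeta_{p^2}^{(p)}=\Lambda_{p-1}+\zetax p^{1/(p-1)-1/p^2}$, each further Newton step appends exactly one term $\zetax p^{1/(p-1)-1/p^{l+1}}$, by showing $\Phi_{p^2}$ evaluated at the current approximation has valuation jumping by the right amount — this is where the structure of $\Phi_{p^2}$ as $\frac{T^{p^2}-1}{T^p-1}$ and the fact that $\zeta_{p^2}^p$ should be approaching a primitive $p$-th root of unity becomes essential. Finally, since $\sigma_2=\sum_{k\ge2}p^{-1/p^k}$ and $\frac1{p-1}-\frac1{p^l}\to\frac1{p-1}$, passing to the limit $i\to\infty$ gives $\zeta_{p^2}=\Lambda_{p-1}+\zetax p^{1/(p-1)}\sigma_2+O(p^{1/(p-1)})$, the error term absorbing everything of valuation $\ge\frac1{p-1}$.

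**Main obstacle.** The hard part is the exact valuation bookkeeping at each Newton step — in particular proving that no cancellation occurs that would make $v_p(\Phi_{p^2}(\zeta_{p^2}^{(i)}))$ larger than predicted (which would cause the algorithm to skip exponents) and that the Teichmüller coefficient of the leading term is precisely $\frac{1}{[k!]}\zetax^k$ resp. $\zetax$, not some other representative. Concretely, one must compute $\Phi_{p^2}(\zeta_{p^2}^{(i)}) \bmod p^{(\text{next exponent})+1+\varepsilon}$ and extract its leading Teichmüller digit; the combinatorics of $\sum_{k=0}^{p-1}\binom{pk}{j}$ modulo $p$, together with the identity that turns these into reciprocal factorials, is the crux and is presumably where the harmonic-number identity of \Cref{intro1} ultimately enters in the higher $\zeta_{p^n}$ analysis, though for $\zeta_{p^2}$ itself one can likely get away with the cleaner generating-function identity $\sum_{k=0}^{p-1}(1+u)^{pk}\equiv \frac{(1+u)^{p^2}-1}{(1+u)^p-1}$ and direct estimation. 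Since all of this is carried out in \cite{WangYuan2021}, I would cite that theorem for the general recursion and only reproduce the specialization $n=2$ in detail.
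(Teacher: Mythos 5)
Your proposal is correct and matches the paper's treatment: the paper gives no independent proof of this statement, presenting it as a special case of \cite[Theorem 3.3]{WangYuan2021}, and your sketch of the Newton-algorithm induction (tracking the largest slope and the root of the residue polynomial at each step, with the slope regime changing at $i=p-1$) is exactly how that reference establishes it, so deferring to that citation is the same route. The only slips are inessential to the structure: your trial valuation $\frac{i+1}{p(p-1)}+1$ for $\Phi_{p^2}(\zeta_{p^2}^{(i)})$ should involve $v_p(\Phi_{p^2}'(\zeta_{p^2}))=2-\frac{1}{p-1}$ rather than $1$, and the telescoping you attempt gives $\sum_{k=2}^{l}(p-1)p^{-k}=\frac{1}{p}-\frac{1}{p^{l}}$, not $\frac{1}{p-1}-\frac{1}{p^{l}}$ (the exponents $\frac{1}{p-1}-\frac{1}{p^{l}}$ simply come from $\zetax p^{\frac{1}{p-1}}\cdot p^{-1/p^{l}}$).
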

\begin{remark}
    Theoretically, the transfinite Newton algorithm is an ``effective" method to get the canonical expansion of an algebraic number in $\bbL_p$, but the computation is relatively heavy even for finite steps. The notion of $r$-truncated canonical expansion also encodes the information about how many steps one needs to get the $r$-truncated canonical expansion using the transfinite Newton algorithm. In fact, it reduces to count the cardinality of the $r$-truncated support (i.e. the cardinality of the set $\{x\in \bbQ: \alpha_x\neq 0, x< r\}$).

    On the other hand, in practice, it is not obvious that one can get the first $\aleph_0$-terms of the canonical expansion of an algebraic number.
    For the first $\aleph_0$-terms of the canonical expansion of $\zeta_{p^n}$, we did try the first few terms, make a conjecture on the general formula, then prove it inductively. %
\end{remark}

We introduce a variant of transfinite Newton algorithm (cf. \Cref{transfinite}), which allows us to establish the $\left(\frac{1}{p-1}+\frac{1}{p(p-1)}\right)$-truncated canonical expansion of $\zeta_{p^2}$ and the $\frac{2}{p-1}$-truncated expansion of $\zeta_{p^2}$.

\begin{theorem}\label{coro:40822}\leavevmode
    \begin{enumerate}
        \item (cf. \Cref{prop:47112}) $$\zeta_{p^2}=\Lambda_{p-1}+\zeta_{2(p-1)}p^{\frac{1}{(p-1)}}\sigma_2+\zetax^2p^{\frac{1}{p-1}+\frac{1}{p(p-1)}}\sigma_2+O\left(p^{\frac{1}{p-1}+\frac{1}{p(p-1)}}\right).$$
        \item (cf. \Cref{mainexpansion})\begin{align*}
                  \zeta_{p^2}= & \left(\sum_{k=0}^{p-1}\frac{1}{[k!]}\zetax^k p^{\frac{k}{p(p-1)}}\right)\left(1+\zetax p^{\frac{1}{p-1}}\sigma_2\right)+\frac{1}{2}\zetax^2 p^{\frac{2}{p-1}}\sigma_2^2        \\
                               & \quad+\frac{1}{2}\zetax^3p^{\frac{2}{p-1}-\frac{p-2}{p^2(p-1)}}-\sum_{k=1}^{p-1}\frac{H_k}{k!}\zetax^{k+1} p^{\frac{1}{p-1}+\frac{k}{p(p-1)}}+O\left(p^{\frac{2}{p-1}}\right).
              \end{align*}
    \end{enumerate}
\end{theorem}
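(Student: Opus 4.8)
The plan is to keep running the variant of the transfinite Newton algorithm of \Cref{transfinite} on the cyclotomic polynomial $\Phi_{p^2}(T)=\sum_{k=0}^{p-1}T^{pk}$, bootstrapping from the $\aleph_0$-truncated canonical expansion $\zeta_{p^2}=\Lambda_{p-1}+\zetax p^{\frac1{p-1}}\sigma_2+O(p^{\frac1{p-1}})$ furnished by the preceding theorem. At a general stage one has an approximation $\eta\in\bbL_p$, writes $\zeta_{p^2}=\eta+\delta$, forms
\[
\Phi_{p^2}(\eta+X)=\sum_{j\ge 0}c_j(\eta)X^{j}\in\bbL_p[X],\qquad c_0(\eta)=\Phi_{p^2}(\eta),\ c_1(\eta)=\Phi_{p^2}'(\eta),
\]
and reads off the leading block of $\delta$ from the edge of least slope of the transfinite Newton polygon of $\sum_j c_j(\eta)X^{j}$ together with the associated edge equation; one then adjoins that block to $\eta$ and repeats. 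Since $\Phi_{p^2}$ is a sum of $p$ monomials, every $c_j(\eta)$ is governed by the multinomial expansions of the powers $\eta^{pk}$, $0\le k\le p-1$, and this is exactly where the incomplete exponential Bell polynomials — hence, because $\Lambda_{p-1}$ is a length-$p$ truncation, the $(p-1)$-restricted Stirling numbers $\stirling{\cdot}{\cdot}_{\le p-1}$ — enter and where the computation is concentrated. (As a running sanity check, $\zeta_{p^2}^{p}$ is a primitive $p$-th root of unity, so $v_p(\zeta_{p^2}^{p}-1)=\tfrac1{p-1}$.)

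For part (1) I would expand $\Phi_{p^2}$ at $\eta=\Lambda_{p-1}+\zetax p^{\frac1{p-1}}\sigma_2$ to $p$-adic precision $\tfrac1{p-1}+\tfrac1{p(p-1)}$. The key point is that, below that precision, the infinitely many monomials generated by the $\sigma_2$-tail collapse in each $c_j(\eta)$ to a single dominant residual, so the least-slope edge of the Newton polygon carries a one-term edge equation and forces the next block of $\zeta_{p^2}$ to be $\zetax^{2}p^{\frac1{p-1}+\frac1{p(p-1)}}\sigma_2$, which is \Cref{prop:47112}.

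For part (2) I would push the same iteration up to precision $\tfrac2{p-1}$, and two features must be disentangled. First, a repackaging: the factor $\Lambda_{p-1}\!\left(1+\zetax p^{\frac1{p-1}}\sigma_2\right)$ absorbs $\Lambda_{p-1}$, the block $\zetax p^{\frac1{p-1}}\sigma_2$, the part-(1) correction $\zetax^{2}p^{\frac1{p-1}+\frac1{p(p-1)}}\sigma_2$ (which is the product of the $k=1$ summand $\tfrac1{[1!]}\zetax$ of $\Lambda_{p-1}$ with $\zetax p^{\frac1{p-1}}\sigma_2$) and all their analogues coming from the $k\ge 2$ summands of $\Lambda_{p-1}$; likewise the isolated monomial $\tfrac12\zetax^{3}p^{\frac2{p-1}-\frac{p-2}{p^{2}(p-1)}}$ is precisely the $k=1$ summand of $\Lambda_{p-1}$ times the leading term of $\tfrac12\zetax^{2}p^{\frac2{p-1}}\sigma_2^{2}$, the $k\ge 2$ contributions all lying in $O(p^{\frac2{p-1}})$. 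Second, the residual at valuation $\tfrac1{p-1}+\tfrac{k}{p(p-1)}$ for $1\le k\le p-1$ comes out, after expanding $\Lambda_{p-1}^{pk}$ via restricted-Stirling numbers, as the double sum $\sum_{i=1}^{k}\tfrac1{(k-i)!}\bigl(\sum_{m=1}^{p}\tfrac{(p-1)!}{(p-m)!(i+p)!}\stirling{i+p}{m}_{\le p-1}\bigr)$; here I would invoke the harmonic number identity \Cref{maintheorm1}, which says precisely that this sum equals $\tfrac1{(k-1)!}-\tfrac1{k!}H_k$, so once the $\tfrac1{(k-1)!}$ part is absorbed into the $\Lambda_{p-1}\!\left(1+\zetax p^{\frac1{p-1}}\sigma_2\right)$ repackaging what remains is exactly $-\sum_{k=1}^{p-1}\tfrac{H_k}{k!}\zetax^{k+1}p^{\frac1{p-1}+\frac{k}{p(p-1)}}$, i.e.\ \Cref{mainexpansion}.

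The main obstacle will be the valuation bookkeeping inside these expansions of $\Phi_{p^2}$ at the running approximation: one must show that at each rational exponent below the truncation point only finitely many of the (infinitely many, owing to the $\sigma_2$- and $\sigma_2^{2}$-tails) monomials survive, identify them, and compute their Teichm\"uller coefficients — a step in which the restricted Stirling numbers have to be handled modulo $p$ and for which the harmonic number identity is the decisive simplification. Secondary points needing care: that the variant algorithm actually reaches precision $\tfrac2{p-1}$ after finitely many ($\aleph_0^{2}$) blocks with nothing hiding strictly below $\tfrac2{p-1}$, and — since part (1) asserts a canonical expansion — that the exponents occurring in $\Lambda_{p-1}$, in $\zetax p^{\frac1{p-1}}\sigma_2$ and in $\zetax^{2}p^{\frac1{p-1}+\frac1{p(p-1)}}\sigma_2$ are pairwise distinct and all $<\tfrac1{p-1}+\tfrac1{p(p-1)}$, so that those displayed terms really form the initial segment of the canonical expansion.
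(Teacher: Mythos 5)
Your plan for part (1) is essentially the paper's: the paper perturbs $\Phi_{p^2}$ by the full guessed block $\mu=\Lambda_{p-1}+\left(1+\zetax p^{\frac{1}{p(p-1)}}\right)\zetax p^{\frac{1}{p-1}}\sigma_2$ and checks, using the expansions of $\mu^p-1$ and $\mu^{p^2}-1$, that $v_p\left(b_{p(p-1)}\right)-v_p\left(b_{p(p-1)-1}\right)=\frac{1}{p-1}+\frac{1}{p(p-1)}$, which is the guess-and-verify form of your edge-equation derivation; that part of your proposal is fine.

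The gap is in part (2). ``Push the same iteration up to precision $\frac{2}{p-1}$'' is exactly what the paper does not (and arguably cannot) do: between $\frac{1}{p-1}+\frac{1}{p(p-1)}$ and $\frac{2}{p-1}$ there are $\aleph_0^2$ blocks (the $\sigma_2^2$ family alone is doubly indexed), and your proposal supplies no mechanism for traversing them or for proving that a pattern guessed after finitely many stages persists --- the paper itself remarks that reaching a prescribed truncation by iterating the algorithm is in general not known to be possible. The paper's decisive device, absent from your plan, is to lump the entire unknown tail into one indeterminate $\calM$, write $\Lambda=\Lambda_{p-1}\left(1+\zetax p^{\frac{1}{p-1}}\sigma_2\right)+\calM$, and extract the linear equation $p\calM=\mu\left(\tilde{\Lambda}_{p-1}^+/\mu^p-1\right)+O\left(p^{1+\frac{2}{p-1}}\right)$ by computing $\Lambda^p$ in two ways; the harder of the two computations, $\Lambda^p=\tilde{\Lambda}_{p-1}^++O\left(p^{1+\frac{2}{p-1}}\right)$ (\Cref{lem:26652}), is a genuine Newton-polygon majoration resting on the structure of $\Newt{\Phi^{(k,2)}}$ from the earlier paper, not a routine valuation estimate. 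You relegate precisely this point (``nothing hiding strictly below $\frac{2}{p-1}$'') to a ``secondary point needing care,'' but it is the central difficulty and the reason the variant algorithm is introduced. A lesser discrepancy: you would invoke \Cref{maintheorm1} on the double Stirling sum directly, whereas the paper applies \Cref{lem:35904} early (inside \Cref{lem:43810}) to eliminate the Stirling numbers and only later uses $\sum_{k=1}^{n}\binom{n}{k}\frac{(-1)^{k+1}}{k}=H_n$; your routing is workable mod $p$, but since \Cref{maintheorm1} is only a congruence you would still need to check that mod-$p$ information determines the coefficients at the exponents $\frac{1}{p-1}+\frac{k}{p(p-1)}$ to the stated precision.
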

\begin{remark}
    The first expansion corresponds to the first $2\aleph_0$-terms of the canonical expansion of $\zeta_{p^2}$ and the second expansion corresponds to the first $\aleph_0^2$-terms of the canonical expansion of $\zeta_{p^2}$.
\end{remark}

\section{A $\mathrm{mod}\  p$ identity of harmonic numbers}
\subsection{Stirlings numbers of the second kind}

In \Cref{intro1}, we define the ($r$-restricted) Stirling numbers of the second kind as the special values of the incomplete exponential Bell polynomial and the generating function formula of Bell polynomial implies the generating function formula for the ($r$-restricted) Stirling numbers of the second kind. Combining with the fact that $\stirling{n}{k}_{\leq r}=0$ for $n\geq rk+1$, we can write the generating function formula as
\begin{equation}\label{genfunctform}
    \frac{1}{k!}\left(\sum_{m= 1}^r\frac{t^m}{m!}\right)^k=\sum_{n= k}^\infty\stirling{n}{k}_{\leq r}\frac{t^n}{n!},\ k=0,1,2,\cdots .
\end{equation}

We denote by $(x)_{n}=x(x-1)(x-2)\cdots (x-n+1)$ the falling factorials, which form a basis of the $\bbQ$-vector space $\bbQ[x]$.
The Stirling numbers of the second kind may also be characterized as the coordinate of powers of the indeterminate $x$ with respect to the basis consisting of the falling factorials (cf.  \cite[Page 207 Theorem B]{Comtet1974}) :  If $n>0$, one has
\begin{equation*}
    x^n=\sum_{m=0}^n\stirling{n}{m} (x)_m .
\end{equation*}

The following proposition collects the arithmetic properties of ($r$-restricted) Stirling numbers of the second kind in our previous work.
\begin{proposition}\leavevmode
    \begin{propenum}
        \item \label{it:1}\cite[Corollary 3.12]{WangYuan2021} \begin{equation*}
            \sum_{k=1}^n(-1)^{k-1}(k-1)!\stirling{n}{k}=
            \begin{cases}
                0, & n\geq 2; \\
                1, & n=1.
            \end{cases}
        \end{equation*}
        \item \label{it:2}\cite[Lemma 3.14]{WangYuan2021}Let $p$ be an odd prime number. For an integer $k$ that $1\leq k\leq p$, one has
        \begin{equation*}
            \stirling{p-1+k}{p}\equiv
            \begin{cases}
                1 \bmod{p}, & \text{ if } k=1 \text{ or } p; \\
                0 \bmod{p}, & \text{ otherwise}.
            \end{cases}
        \end{equation*}
        \item \label{it:3}\cite[Lemma 3.15]{WangYuan2021}Let $p$ be an odd prime number and $r$ be an integer number satisfying $1\leq r<p-1$, then one has
        $$\stirling{r+p}{p}_{\leq r}=B_{r+p,p}(1,\cdots,1,0)\equiv 0 \bmod{p} .$$
        \item \label{it:4}\cite[Lemma 3.16]{WangYuan2021}Let $i$ be an integer that $1\leq i\leq p-1$ and $k\in \bbZ_{>0}$. Then for any integer $l\geq k$, we have
        $$v_p\left(\frac{k!}{l!}\stirling{l}{k}_{\leq i}\right)\geq 0.$$
    \end{propenum}
\end{proposition}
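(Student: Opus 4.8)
Since each part is already recorded (with its own citation) in \cite{WangYuan2021}, one option is simply to quote it; I instead sketch direct proofs of all four. For (1) the plan is to pass to exponential generating functions. From $\sum_{n\geq k}\stirling{n}{k}\frac{t^{n}}{n!}=\frac{(e^{t}-1)^{k}}{k!}$ one computes, as an identity in $\bbQ[[t]]$,
\[
\sum_{n\geq 1}\left(\sum_{k=1}^{n}(-1)^{k-1}(k-1)!\stirling{n}{k}\right)\frac{t^{n}}{n!}=\sum_{k\geq 1}\frac{(-1)^{k-1}}{k}(e^{t}-1)^{k}=\log\left(1+(e^{t}-1)\right)=t ,
\]
so the inner sum equals $1$ for $n=1$ and $0$ for $n\geq 2$. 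This part is immediate.

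For (2) I would begin from the surjection count $p!\stirling{n}{p}=\sum_{j=0}^{p}(-1)^{j}\binom{p}{j}(p-j)^{n}$. Re-indexing by $i=p-j$ and using that $p$ is odd rewrites the right-hand side as $p^{n}-\sum_{i=1}^{p-1}(-1)^{i}\binom{p}{i}i^{n}$. The crucial step is a reduction modulo $p^{2}$: from $i\binom{p}{i}=p\binom{p-1}{i-1}$ together with $\binom{p-1}{i-1}\equiv(-1)^{i-1}\bmod p$ one gets $(-1)^{i}\binom{p}{i}i^{n}\equiv-p\,i^{n-1}\bmod{p^{2}}$, hence $p!\stirling{n}{p}\equiv p^{n}+p\sum_{i=1}^{p-1}i^{n-1}\bmod{p^{2}}$. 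For $n\geq 2$ this gives $(p-1)!\stirling{n}{p}\equiv\sum_{i=1}^{p-1}i^{n-1}\bmod p$, and Wilson's theorem $(p-1)!\equiv-1\bmod p$ converts the left side to $-\stirling{n}{p}$. Finally, with $n=p-1+k$ the exponent $n-1=p-2+k$ is a multiple of $p-1$ exactly for $k\in\{1,p\}$, and the classical power-sum congruence ($\sum_{i=1}^{p-1}i^{m}\equiv-1\bmod p$ if $(p-1)\mid m$, and $\equiv 0$ otherwise, for $m\geq 1$) then yields the stated dichotomy. I expect this $p^{2}$-precision bookkeeping to be the main obstacle: every cancellation must be carried one $p$-power beyond the naive estimate.

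For (3) and (4) I would work from the restricted generating function $\frac{1}{k!}\left(\sum_{m=1}^{r}\frac{t^{m}}{m!}\right)^{k}=\sum_{n\geq k}\stirling{n}{k}_{\leq r}\frac{t^{n}}{n!}$ recalled in the text. Comparing the coefficient of $t^{l}$ gives
\[
\frac{k!}{l!}\stirling{l}{k}_{\leq i}=[t^{l}]\left(\sum_{m=1}^{i}\frac{t^{m}}{m!}\right)^{k}=\sum_{\substack{m_{1}+\cdots+m_{k}=l\\ 1\leq m_{j}\leq i}}\frac{1}{m_{1}!\cdots m_{k}!} ,
\]
and since each $m_{j}\leq i\leq p-1$ makes every $m_{j}!$ prime to $p$, this finite sum lies in $\bbZ_{(p)}$, which is exactly (4). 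For (3) I would take $k=p$, $l=r+p$, $i=r$ and let the cyclic group $\bbZ/p$ act on the index tuples $(m_{1},\dots,m_{p})$ by rotation: since $p\nmid r+p$ (because $1\leq r<p-1$) no tuple is constant, so every orbit has size $p$ while the summand is constant along orbits, giving $[t^{r+p}]\left(\sum_{m=1}^{r}\frac{t^{m}}{m!}\right)^{p}=p\,a$ with $a\in\bbZ_{(p)}$. Multiplying back by $\frac{(r+p)!}{p!}\in\bbZ$ shows $\stirling{r+p}{p}_{\leq r}\in p\,\bbZ_{(p)}\cap\bbZ=p\bbZ$, i.e. $\stirling{r+p}{p}_{\leq r}\equiv 0\bmod p$. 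The only genuine input in these last two parts is the fixed-point-free rotation action; the rest drops out of the generating-function identity.
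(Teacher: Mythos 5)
Your four arguments are all correct. Note that the paper itself offers no proof of this proposition: each item is quoted verbatim from the authors' earlier work \cite{WangYuan2021}, so there is no in-paper argument to compare against, and your self-contained derivations are a genuine addition rather than a variant. Checking the details: in (1) the interchange of summation is legitimate because only $k\leq n$ contributes to the coefficient of $t^n/n!$, and $\log(1+(e^t-1))=t$ settles it. In (2) the $\bmod\ p^2$ bookkeeping is exactly right: $(-1)^i\binom{p}{i}i^n\equiv -p\,i^{n-1}\bmod p^2$ via $i\binom{p}{i}=p\binom{p-1}{i-1}$ and $\binom{p-1}{i-1}\equiv(-1)^{i-1}$, the hypothesis $n=p-1+k\geq 2$ kills the $p^n$ term modulo $p^2$, and the divisibility $(p-1)\mid(p-2+k)$ holds for $1\leq k\leq p$ precisely when $k\in\{1,p\}$, so the power-sum congruence gives the stated dichotomy. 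In (3) the fixed-point-free rotation action is the right mechanism: a constant tuple would force $p\mid r+p$, impossible for $1\leq r<p-1$, and the remaining factor $\frac{(r+p)!}{p!}=(p+1)\cdots(p+r)$ is a $p$-adic unit, so integrality plus membership in $p\,\bbZ_{(p)}$ yields $p\mid\stirling{r+p}{p}_{\leq r}$. In (4) the coefficient formula exhibits $\frac{k!}{l!}\stirling{l}{k}_{\leq i}$ as a sum of reciprocals of products $m_1!\cdots m_k!$ with each $m_j\leq i\leq p-1$, hence an element of $\bbZ_{(p)}$; this is consistent with the restricted generating function as stated in the paper, since the case split in the definition of $\stirling{n}{k}_{\leq r}$ agrees with the specialization $x_m=\delta_{m\leq r}$ of the Bell-polynomial generating function. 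No gaps.
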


\subsection{Congruence properties of Stirling numbers of the second kind }
In this paragraph, we will establish three congruence properties of the ($r$-restricted) Stirling numbers of the second kind.

\begin{lemma}\label{lem:36099}
    Fix a prime $p\geq 3$. For $1\leq i\leq p-1$ and $1\leq m\leq p$, we have
    $$p\mid \stirling{i+p}{m}_{\leq p-1}.$$
\end{lemma}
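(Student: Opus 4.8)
The plan is to express the restricted Stirling number $\stirling{i+p}{m}_{\leq p-1}$ in terms of ordinary Stirling numbers and then reduce the result modulo $p$. The key elementary observation is that $i+p\leq 2p-1<2p$, so in any set partition of an $(i+p)$-element set into $m$ blocks at most one block can have size $\geq p$; after deleting such a block, the remaining $\leq i\leq p-1$ elements are partitioned into $m-1$ blocks that are automatically of size $\leq p-1$. Sorting partitions by the size $p+a$ of this (unique, if present) oversized block — equivalently, extracting the coefficient of $t^{i+p}$ from $\tfrac1{m!}\bigl((e^{t}-1)-\sum_{j\geq p}\tfrac{t^{j}}{j!}\bigr)^{m}$ and noting the tail $\sum_{j\geq p}\tfrac{t^{j}}{j!}$ can occur at most once in a monomial of degree $\leq i+p$ — gives
\[
\stirling{i+p}{m}_{\leq p-1}=\stirling{i+p}{m}-\sum_{a=0}^{i}\binom{i+p}{i-a}\stirling{i-a}{m-1}.
\]

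Next I would reduce the right-hand side modulo $p$, handling the two pieces separately. For the sum, Lucas' theorem applied to the base-$p$ expansions of $i+p=p+i$ (with $0\leq i\leq p-1$) and of $i-a$ gives $\binom{i+p}{i-a}\equiv\binom{i}{a}\pmod p$, so the sum is congruent to $\sum_{a=0}^{i}\binom{i}{a}\stirling{i-a}{m-1}$, which is the coefficient of $t^{i}/i!$ in $e^{t}\cdot\tfrac{(e^{t}-1)^{m-1}}{(m-1)!}$; since $e^{t}(e^{t}-1)^{m-1}=(e^{t}-1)^{m}+(e^{t}-1)^{m-1}$, this equals $m\stirling{i}{m}+\stirling{i}{m-1}$. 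For the term $\stirling{i+p}{m}$, I would use the polynomial congruence $(x)_{p}\equiv x^{p}-x\pmod p$ together with the connection formula $(x)_{l}(x)_{p}=\sum_{k}\binom{l}{k}\binom{p}{k}k!\,(x)_{l+p-k}$: because $l\leq i\leq p-1$ and $p\mid\binom{p}{k}$ for $1\leq k\leq p-1$, only the $k=0$ term survives mod $p$, so $x^{i}(x)_{p}\equiv\sum_{l}\stirling{i}{l}(x)_{l+p}\pmod p$; combining with $x^{i+p}=x^{i}x^{p}\equiv x^{i}(x)_{p}+x^{i+1}\pmod p$ and comparing coefficients of $(x)_{m}$ in the falling-factorial basis yields $\stirling{i+p}{m}\equiv\stirling{i}{m-p}+\stirling{i+1}{m}\pmod p$, where $\stirling{i}{m-p}=0$ for $1\leq m\leq p$ and $i\geq 1$ (either $m-p<0$, or $m=p$ and $\stirling{i}{0}=0$). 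Assembling the pieces,
\[
\stirling{i+p}{m}_{\leq p-1}\equiv\stirling{i+1}{m}-m\stirling{i}{m}-\stirling{i}{m-1}\pmod p,
\]
and the right-hand side vanishes identically by the defining recurrence $\stirling{i+1}{m}=m\stirling{i}{m}+\stirling{i}{m-1}$.

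The step I expect to be the crux is the congruence $\stirling{i+p}{m}\equiv\stirling{i+1}{m}\pmod p$: it relies on the Vandermonde-type connection formula for falling factorials and needs the boundary case $m=p$ checked by hand. If one prefers to bypass it, there is a two-case alternative using only facts already available. For $1\leq m\leq p-1$ one has $v_{p}(m!)=0$, so \Cref{it:4} gives $v_{p}\bigl(\tfrac{m!}{(i+p)!}\stirling{i+p}{m}_{\leq p-1}\bigr)\geq 0$, whence $v_{p}\bigl(\stirling{i+p}{m}_{\leq p-1}\bigr)\geq v_{p}\bigl((i+p)!\bigr)=1$. For $m=p$ one instead expands $\bigl(\sum_{j=1}^{p-1}\tfrac{t^{j}}{j!}\bigr)^{p}\equiv\sum_{j=1}^{p-1}\tfrac{t^{pj}}{(j!)^{p}}\pmod p$ by the Frobenius/multinomial argument (every multinomial coefficient attached to a mixed partition of $p$ into parts $<p$ is divisible by $p$, and $j!$ is a $p$-unit for $j\leq p-1$), observes that $p\nmid i$ forces every exponent $pj$ to differ from $i+p$, concludes that the coefficient of $t^{i+p}$ in $\tfrac1{p!}\bigl(\sum_{j=1}^{p-1}\tfrac{t^{j}}{j!}\bigr)^{p}$ is $p$-integral, and multiplies by $(i+p)!$ to recover the factor of $p$.
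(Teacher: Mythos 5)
Your main argument is correct, and it takes a genuinely different route from the paper. The paper splits into two cases: for $1\leq m\leq p-1$ it invokes \Cref{it:4} to get $v_p\bigl(\tfrac{m!}{(i+p)!}\stirling{i+p}{m}_{\leq p-1}\bigr)\geq 0$ and hence $v_p\bigl(\stirling{i+p}{m}_{\leq p-1}\bigr)\geq v_p((i+p)!)=1$, while for $m=p$ it uses the three-term recurrence for restricted Stirling numbers from Komatsu's Proposition 1 to reduce to the same valuation bound plus $v_p\bigl(\binom{i+p-1}{p-1}\bigr)=1$. Your route is uniform in $m$: the decomposition $\stirling{i+p}{m}_{\leq p-1}=\stirling{i+p}{m}-\sum_{a=0}^{i}\binom{i+p}{i-a}\stirling{i-a}{m-1}$ (valid because $i+p<2p$ forbids two oversized blocks), Lucas, the binomial-convolution identity $\sum_b\binom{i}{b}\stirling{b}{m-1}=m\stirling{i}{m}+\stirling{i}{m-1}$, and the congruence $\stirling{i+p}{m}\equiv\stirling{i+1}{m}+\stirling{i}{m-p}\bmod p$ — which is exactly the Sánchez--Carrillo congruence the paper itself cites later in the proof of \Cref{lem:52893} — combine with the defining recurrence to give an identically vanishing expression mod $p$. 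What your approach buys is self-containment (no appeal to \Cref{it:4} or to Komatsu's recurrence) and an exact closed-form relation between the restricted and ordinary Stirling numbers that explains \emph{why} the divisibility holds; what the paper's approach buys is brevity, since \Cref{it:4} is already on the shelf. Your fallback two-case variant essentially reproduces the paper's proof for $m<p$ and replaces the $m=p$ step by the Frobenius/multinomial argument on $\bigl(\sum_{j=1}^{p-1}t^j/j!\bigr)^p$, using $p\nmid i+p$; that is also correct and is in fact the same mechanism the paper deploys in the proof of \Cref{termp}.
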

\begin{proof}
    If $m<p$, by \Cref{it:4}, we have
    $$v_p\left(\frac{m!}{(i+p)!}\stirling{i+p}{m}_{\leq p-1}\right)\geq 0.$$
    As a result, we have
    $$v_p\left(\stirling{i+p}{m}_{\leq p-1}\right)\geq v_p\left((i+p)!\right)-v_p(m!)=1.$$
    If $m=p$, then by \cite[Proposition 1]{Komatsu2016}
    \begin{align*}
        \stirling{i+p}{p}_{\leq p-1}= & p\stirling{i+p-1}{p}_{\leq p-1}+\stirling{i+p-1}{p-1}_{\leq p-1}-\binom{i+p-1}{p-1}\stirling{i}{p-1}_{\leq p-1} \\
        \equiv                        & \stirling{i+p-1}{p-1}_{\leq p-1}-\binom{i+p-1}{p-1}\stirling{i}{p-1}_{\leq p-1} \bmod{p}.
    \end{align*}
    Again by \Cref{it:4}, we have
    $$v_p\left(\frac{(p-1)!}{(i+p-1)!}\stirling{i+p-1}{p-1}_{\leq p-1}\right)\geq 0.$$
    Therefore
    $$v_p\left(\stirling{i+p-1}{p-1}_{\leq p-1}\right)\geq v_p\left((i+p-1)!\right)-v_p((p-1)!)=1.$$
    Notice that
    $$v_p\left(\binom{i+p-1}{p-1}\right)=v_p((i+p-1)!)-v_p(i!)-v_p((p-1)!)=1,$$
    one can conclude that $p\mid \stirling{i+p}{m}_{\leq p-1}$.

\end{proof}

\begin{lemma}\label{lem:52893}
    For a prime $p\geq 3$ and an integer $1\leq i\leq p-1$, we have
    $$\sum_{m=p+1}^{i+p} (-1)^{m-1}(m-1)!\stirling{i+p}{m}_{\leq p-1}=\begin{cases}p+O\left(p^2\right),&\ i=1\\O\left(p^2\right),&\ i\geq 2\end{cases}.$$
\end{lemma}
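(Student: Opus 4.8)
The plan is to reduce the alternating sum, modulo $p^2$, to the ordinary Stirling identity \Cref{it:1}. First, a pigeonhole observation removes the restriction "$\le p-1$": if $i\le p-1$ and $p+1\le m\le i+p$, then any set partition of an $(i+p)$-element set into $m$ nonempty blocks automatically has all blocks of size $\le p-1$, for a block of size $\ge p$ together with the remaining $m-1\ge p$ nonempty blocks would involve at least $2p>i+p$ elements. Hence $\stirling{i+p}{m}_{\le p-1}=\stirling{i+p}{m}$ throughout the range of summation, and putting $m=p+s$ (so $s$ runs from $1$ to $i$, and $(-1)^{m-1}=(-1)^{s}$ since $p$ is odd) the sum becomes $\sum_{s=1}^{i}(-1)^{s}(p+s-1)!\,\stirling{i+p}{p+s}$.

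Next I would pull out the single power of $p$ hidden in the factorial. For $1\le s\le i\le p-1$ we have $p\le p+s-1<2p$, so $v_p\bigl((p+s-1)!\bigr)=1$; writing $(p+s-1)!=(p-1)!\cdot p\cdot\prod_{j=1}^{s-1}(p+j)$, using $\prod_{j=1}^{s-1}(p+j)\equiv(s-1)!\pmod p$ together with Wilson's theorem $(p-1)!\equiv-1\pmod p$ gives $(p+s-1)!\equiv-p\,(s-1)!\pmod{p^2}$. Thus, modulo $p^2$, only the residues $\stirling{i+p}{p+s}\bmod p$ are needed.

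The technical core is the congruence $\stirling{i+p}{p+s}\equiv\stirling{i}{s}\pmod p$ for $1\le s\le i\le p-1$. I would prove it by working in $\bbF_p[x]$ in the basis $\{(x)_l\}_{l\ge 0}$ of falling factorials $(x)_l=x(x-1)\cdots(x-l+1)$: each $(x)_l$ is monic of degree $l$, so these form an $\bbF_p$-basis, and the expansion $x^{n}=\sum_{l}\stirling{n}{l}(x)_l$ (which holds already in $\bbZ[x]$) determines the $\stirling{n}{l}\bmod p$ uniquely. From $(x)_p\equiv x^p-x\pmod p$ one gets $x^p\equiv(x)_p+(x)_1\pmod p$, and $(x)_{p+m}\equiv(x)_p(x)_m\pmod p$ for $0\le m\le p-1$; multiplying $x^{i}=\sum_{m=0}^{i}\stirling{i}{m}(x)_m$ by $x^p$ then gives
\[x^{i+p}=x^{i}\cdot x^p\equiv x^{i}(x)_p+x^{i+1}\equiv\sum_{m=0}^{i}\stirling{i}{m}(x)_{p+m}+x^{i+1}\pmod p.\]
Since $x^{i+1}$ has degree $i+1\le p$, it contributes nothing to the coefficient of $(x)_{p+s}$ when $s\ge 1$; comparing that coefficient with the expansion of $x^{i+p}$ yields the claimed congruence. (The boundary case $i=p-1$, where $x^{i+1}=x^p$ itself involves $(x)_p$, is harmless because only coefficients in degree $\ge p+1$ are read off.)

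Putting the pieces together,
\[\sum_{s=1}^{i}(-1)^{s}(p+s-1)!\,\stirling{i+p}{p+s}\equiv-p\sum_{s=1}^{i}(-1)^{s}(s-1)!\,\stirling{i}{s}=p\sum_{s=1}^{i}(-1)^{s-1}(s-1)!\,\stirling{i}{s}\pmod{p^2},\]
and \Cref{it:1} applied with $n=i$ evaluates the last sum to $1$ for $i=1$ and to $0$ for $i\ge 2$, which is exactly $p+O(p^2)$ and $O(p^2)$. I expect the step $\stirling{i+p}{p+s}\equiv\stirling{i}{s}\pmod p$ to be the main obstacle; the falling-factorial-basis argument is the clean route, sidestepping the messier computations with Fermat quotients or with the recursion for restricted Stirling numbers that a more direct estimate of $\stirling{i+p}{m}\bmod p^2$ for $m\le p$ would require.
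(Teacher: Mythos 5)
Your proof is correct and follows the same skeleton as the paper's: drop the size restriction on the range $p+1\le m\le i+p$, extract the factor $p$ from $(p+s-1)!$ via Wilson's theorem, reduce the Stirling numbers modulo $p$, and invoke \Cref{it:1}. The one substantive difference is the key congruence $\stirling{i+p}{p+s}\equiv\stirling{i}{s}\bmod p$: the paper obtains it by citing a result of S\'anchez-Peregrino, namely $\stirling{i+p}{t+p}\equiv\stirling{i+1}{t+p}+\stirling{i}{t}\bmod p$ (where the first term vanishes since $i+1<t+p$), whereas you derive it from scratch by comparing coefficients in the falling-factorial basis of $\bbF_p[x]$ using $x^p\equiv(x)_p+(x)_1$ and $(x)_{p+m}\equiv(x)_p(x)_m\bmod p$; this makes the argument self-contained at no real cost in length, and your verification that the $O(p)$ error in the Stirling congruence is absorbed by the factor $p$ from the factorial is exactly the point the paper's final display also relies on.
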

\begin{proof}We rewrite the summation in the lemma as follows:
    \begin{equation*}
        \begin{split}
            \sum_{m=p+1}^{i+p} (-1)^{m-1}(m-1)!\stirling{i+p}{m}_{\leq p-1}
            =\sum_{t=1}^{i} (-1)^t(t+p-1)!\stirling{i+p}{t+p}_{\leq p-1}.
        \end{split}
    \end{equation*}
    Thus, we reduce to study the $p$-adic valuation of each term.
    \begin{enumerate}
        \item
              Since $(p-1)!\equiv -1\bmod{p}$,  we have
              $$(t+p-1)! =(p-1)!\cdot p\cdot (p+1)\cdots (p+t-1)\equiv -p(t-1)!\bmod{p^2}.$$
        \item As $(i+p)-(t+p)+1=i-t+1\leq p-1$, we have $\stirling{i+p}{t+p}_{\leq p-1}=\stirling{i+p}{t+p}$. By \cite[Corollary 2.1]{Sanchez2000}, we have
              $$\stirling{i+p}{t+p}\equiv \stirling{i+1}{t+p}+\stirling{i}{t} \bmod{p}.$$ Since $i+1<t+p$, we have $\stirling{i+1}{t+p}=0$.
    \end{enumerate} By combining these estimations, we have
    \begin{align*}
          & \sum_{m=p+1}^{i+p} (-1)^{m-1}(m-1)!\stirling{i+p}{m}_{\leq p-1}                               \\
        = & \sum_{t=1}^{i} (-1)^t\left(-p(t-1)!+O\left(p^2\right)\right)\left(\stirling{i}{t}+O(p)\right) \\
        = & O\left(p^2\right)+p\sum_{t=1}^i (-1)^{t-1}(t-1)!\stirling{i}{t}                               \\
        = & \begin{cases}p+O\left(p^2\right),&\ i=1\\O\left(p^2\right),&\ i\geq 2\end{cases},
    \end{align*}
    where the last equality follows from \Cref{it:1}.

\end{proof}

Let $$B_n(x_1,\cdots,x_n)=\sum_{k=1}^n B_{n,k}(x_1,\cdots,x_{n-k+1}), n=1,2,\cdots$$
be the $n$-th complete Bell polynomial. For $n\geq 1$, we define a map
$$\bbB_n: \bbZ^n\rightarrow\bbZ^n, (x_1,\cdots,x_n)\mapsto (B_1(x_1),B_2(x_1,x_2),\cdots,B_n(x_1,\cdots,x_n)).$$

\begin{proposition}\leavevmode Let $p\geq 3$ be a prime.
    \begin{propenum}
        \item For $n\in \bbN_{\geq 1}$, $\bbB_n$ is bijective and we denote its inverse simply by $\bbB_n^{-1}$.
        \item \label{it:x}Let $\delta_j=\begin{cases}1,&\ j\leq p-1,\\0,&\ j\geq p\end{cases}$ and $1\leq i\leq p-1$ be an integer. We have
        $$\bbB_{p+i}^{-1}(\delta_1,\cdots,\delta_{p+i})=(1,0\cdots,0,-1,\frakx_{p+1},\cdots,\frakx_{p+i}),$$
        where $\frakx_{p+k}\in \bbZ$ satisfies $\frakx_{p+k}\equiv \frac{(-1)^{k+1}p}{k}\bmod{p^2}$ for $1\leq k\leq i$.
    \end{propenum}
\end{proposition}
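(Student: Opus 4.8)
The plan is to combine the ``triangular'' shape of $\bbB_n$ with the exponential generating function of the complete Bell polynomials,
\[\sum_{n\ge 0}B_n(x_1,\dots,x_n)\,\frac{t^n}{n!}=\exp\left(\sum_{m\ge 1}x_m\,\frac{t^m}{m!}\right),\]
which follows by summing the generating function of the incomplete Bell polynomials over $k$ (with the convention $B_0=1$).

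For the first assertion, I would observe that $B_{n,1}(x_1,\dots,x_n)=x_n$, while for $k\ge 2$ the polynomial $B_{n,k}(x_1,\dots,x_{n-k+1})$ involves only $x_1,\dots,x_{n-1}$ and has non-negative integer coefficients; hence $B_n(x_1,\dots,x_n)=x_n+P_n(x_1,\dots,x_{n-1})$ for an explicit $P_n\in\bbZ[x_1,\dots,x_{n-1}]$ with $P_1=0$. Thus the $j$-th coordinate of $\bbB_n$ is $x_j+P_j(x_1,\dots,x_{j-1})$, so $\bbB_n$ is a unitriangular polynomial map and hence a bijection of $\bbZ^n$ (indeed of $R^n$ for any commutative ring $R$); its inverse is computed recursively by $x_1=y_1$ and $x_j=y_j-P_j(x_1,\dots,x_{j-1})$.

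For the second assertion, by the first part it suffices to exhibit the unique $(x_1,\dots,x_{p+i})\in\bbZ^{p+i}$ with $B_n(x_1,\dots,x_n)=\delta_n$ for $1\le n\le p+i$. Setting $f(t)=\sum_{j=0}^{p-1}t^j/j!$ (the truncated exponential), this is, via the generating function above, equivalent to $\sum_{m=1}^{p+i}x_m\,t^m/m!\equiv\log f(t)\pmod{t^{p+i+1}}$, i.e.\ $x_m=m!\,[t^m]\log f(t)$. Now write $f(t)=e^t\bigl(1-u(t)\bigr)$ with $u(t)=e^{-t}\sum_{j\ge p}t^j/j!$; the series $u$ has lowest term $t^p/p!$, and since $i\le p-1$ we have $p+i+1\le 2p$, so $u(t)^2\equiv 0\pmod{t^{p+i+1}}$ and therefore $\log f(t)\equiv t-u(t)\pmod{t^{p+i+1}}$. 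Reading off coefficients gives $x_1=1$ and $x_2=\dots=x_{p-1}=0$ (from the summand $t$ alone), while $x_{p+k}=-(p+k)!\,[t^{p+k}]u(t)$ for $0\le k\le i$; expanding $e^{-t}$ and reindexing ($j\mapsto p+l$, then $s=k-l$, using $\binom{p+k}{p+l}=\binom{p+k}{k-l}$) turns this into
\[x_{p+k}=-\sum_{s=0}^{k}(-1)^{s}\binom{p+k}{s}=(-1)^{k+1}\binom{p+k-1}{k},\]
the last equality being the telescoping identity $\sum_{s=0}^{k}(-1)^s\binom{m}{s}=(-1)^k\binom{m-1}{k}$. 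In particular $x_p=-\binom{p-1}{0}=-1$, and each $x_{p+k}$ is an integer; finally, since $k!$ is invertible modulo $p^2$ for $1\le k\le i\le p-1$, the identity $\binom{p+k-1}{k}\,k!=\prod_{j=0}^{k-1}(p+j)\equiv p\,(k-1)!\pmod{p^2}$ yields $\binom{p+k-1}{k}\equiv p/k\pmod{p^2}$, hence $\frakx_{p+k}:=x_{p+k}\equiv(-1)^{k+1}p/k\pmod{p^2}$, which is exactly the asserted tuple.

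I do not expect a serious obstacle: the one point needing care is the hypothesis $i\le p-1$, which is exactly what forces $p+i+1\le 2p$ and makes the quadratic and higher terms of $\log(1-u)$ vanish modulo $t^{p+i+1}$ — without it $\log f$ would pick up a genuine $u^2$ contribution and the closed form for $\frakx_{p+k}$ would change. One should also keep track carefully of the two reindexings when converting the $e^{-t}$-expansion of $u$ into the alternating binomial sum. A slick alternative that avoids manipulating $\log f$ is to use the logarithmic derivative: from $f'(t)=f(t)-t^{p-1}/(p-1)!$ one gets $(\log f)'(t)=1-\dfrac{t^{p-1}}{(p-1)!\,f(t)}\equiv 1-\dfrac{t^{p-1}e^{-t}}{(p-1)!}\pmod{t^{2p-1}}$, whence $x_{p+k}=(-1)^{k+1}\binom{p+k-1}{k}$ follows at once from $[t^k]e^{-t}=(-1)^k/k!$.
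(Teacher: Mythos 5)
Your proof is correct, but it takes a genuinely different route from the paper's. The paper proves bijectivity by induction via the recurrence $B_{n+1}(x_1,\ldots,x_{n+1})=\sum_{i=0}^{n}\binom{n}{i}B_{n-i}(x_1,\ldots,x_{n-i})x_{i+1}$, which simultaneously yields the explicit inversion formula $\frakx_{n+1}=y_{n+1}-\sum_{i=0}^{n-1}\binom{n}{i}y_{n-i}\frakx_{i+1}$; it then runs a second induction on $k$ to pin down $\frakx_{p+k}\bmod p^2$, using the estimates $\binom{p+k}{p-1}=\frac{p}{k+1}+O(p^2)$ and $\frac{1}{t}\binom{p+k}{p+t-1}=\frac{1}{k+1}\binom{k+1}{t}+O(p)$ together with the alternating binomial sum, and it treats $p=3$ by a separate direct calculation. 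You instead get bijectivity from unitriangularity ($B_n=x_n+P_n(x_1,\ldots,x_{n-1})$, which is cleaner) and compute the preimage in closed form by writing the condition as $\sum_m x_m t^m/m!\equiv\log f(t)\pmod{t^{p+i+1}}$ for the truncated exponential $f$, exploiting $f=e^t(1-u)$ with $u=O(t^p)$ and $p+i+1\le 2p$ to kill all higher logarithm terms. Your method buys an exact identity $\frakx_{p+k}=(-1)^{k+1}\binom{p+k-1}{k}$ — strictly stronger than the mod-$p^2$ statement, with the congruence falling out in one line from $\prod_{j=0}^{k-1}(p+j)\equiv p\,(k-1)!\pmod{p^2}$ — and it needs no case split at $p=3$ and no inductive bookkeeping of error terms; the paper's argument is more elementary in that it never leaves the recurrence and needs no generating-function formalism. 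The only points requiring care in your write-up, which you correctly flag, are the role of the hypothesis $i\le p-1$ in suppressing the $u^2$ contribution and the two reindexings in the convolution $e^{-t}\cdot\sum_{j\ge p}t^j/j!$; both are handled correctly.
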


\begin{proof}
    \begin{enumerate}
        \item We prove the first assertion by induction on $n$. For $n=1$, we have $\bbB_1(x)=x$. Assume for $1\leq k\leq n$, $\bbB_k$ are bijective. We will prove the assertion for $k=n+1$.
              For any $(y_1, \cdots, y_{n+1})\in \bbZ^{n+1}$, there exists a unique $(x_1,\cdots, x_n)\in \bbZ^n$ such that $\bbB_n(x_1, \cdots, x_n)=(y_1,\cdots,y_n)$.   The recurrence relation\footnote{Cited from \cite[(12.3.4)]{Andrews1984}. We regard $B_0$ as $1$.}
              $$B_{n+1}(x_{1},\ldots ,x_{n+1})=\sum_{i=0}^{n}\binom{n}{i}B_{n-i}(x_{1},\ldots ,x_{n-i})x_{i+1},$$
              shows that $\bbB_{n+1}$ is bijective. In fact, we can set
              \begin{equation}\label{eq:50448}\begin{aligned}
                      x_{n+1}= & y_{n+1}-\sum_{i=0}^{n-1}\binom{n}{i}B_{n-i}(x_{1},\ldots ,x_{n-i})x_{i+1} \\
                      =        & y_{n+1}-\sum_{i=0}^{n-1}\binom{n}{i}y_{n-i}x_{i+1}.
                  \end{aligned}\end{equation}

        \item For $p=3$, the result follows from a direct calculation. Therefore, in the following, we assume $p\geq 5$. Let $(\frakx_1,\cdots,\frakx_{p+i}):=\bbB_{p+i}^{-1}(\delta_1,\cdots,\delta_{p+i})$. By the proof of the first assertion, we calculate the components of $(\frakx_1,\cdots,\frakx_{p+i})$ inductively. We have $\frakx_1=1$. By the formula \Cref{eq:50448}, we have
              $$\frakx_2=\delta_2-\sum_{i=0}^0\binom{1}{i}\delta_{1-i}\frakx_{i+1}=1-B_1(\frakx_1)=0.$$
              For $2\leq l\leq p-2$, assume that $\frakx_2,\cdots,\frakl_l=0$, we calculate
              $$\frakx_{l+1}=\delta_{l+1}-\sum_{i=0}^{l-1}\binom{l}{i}\delta_{l-i}\frakx_{i+1}=1-B_1(\frakx_1)=0.$$
              For $\frakx_p$ and $\frakx_{p+1}$, a direct calculation shows that
              \begin{equation*}
                  \begin{split}
                      \frakx_p&=\delta_{p}-\sum_{i=0}^{p-2}\binom{p-1}{i}\delta_{p-i-1}\frakx_{i+1}=0-B_1(\frakx_1)=-1,\\
                      \frakx_{p+1}&=\delta_{p+1}-\sum_{i=0}^{p-1}\binom{p}{i}\delta_{p-i}\frakx_{i+1}=0-\binom{p}{p-1}\frakx_p=p.
                  \end{split}
              \end{equation*}
              It rests to show the $\mathrm{mod}\ p^2$ values of $\frakx_{p+i}$ for $1\leq i\leq p-1$, which we will show by induction. Suppose $\frakx_{p+t}\equiv \frac{(-1)^{t+1}p}{t}\bmod{p^2}$ for $1\leq t\leq k\leq p-2$. Thus, we have the following identity
              \begin{equation}\label{identitywy}
                  \begin{split}
                      \frakx_{p+k+1}= & \delta_{p+k+1}-\sum_{i=0}^{p+k-1}\binom{p+k}{i}\delta_{p+k-i}\frakx_{i+1}
                      =               -\sum_{i=p-1}^{p+k-1}\binom{p+k}{i}\frakx_{i+1}                                                                 \\
                      =               & -\left[\binom{p+k}{p-1}\cdot (-1)+\sum_{t=1}^k\binom{p+k}{p+t-1}\frac{(-1)^{t+1}p}{t}+O\left(p^2\right)\right].
                  \end{split}
              \end{equation}
              For $1\leq t\leq k$, we have the following estimations:
              \begin{equation*}
                  \begin{split}
                      \binom{p+k}{p-1}&=p\frac{(p+1)\cdots(p+k)}{(k+1)!}=p\frac{k!+O(p)}{(k+1)!}=\frac{p}{k+1}+O\left(p^2\right),\\
                      \frac{1}{t}\binom{p+k}{p+t-1}&=\frac{(p+t)\cdots(p+k)}{(k-t+1)!\cdot t}=\frac{(t)\cdots (k)+O(p)}{(k-t+1)!\cdot t}\\
                      &=O(p)+\frac{1}{k+1}\cdot\frac{(k+1)!}{(k+1-t)!t!}=O(p)+\frac{1}{k+1}\binom{k+1}{t}.
                  \end{split}
              \end{equation*}
              Thus, the identity \Cref{identitywy} can be written as following
              \begin{equation*}
                  \begin{split}
                      \frakx_{p+k+1}&=\frac{p}{k+1}+\frac{p}{k+1}\sum_{t=1}^k \binom{k+1}{t}(-1)^t+O\left(p^2\right)\\
                      =&\frac{p}{k+1}\sum_{t=0}^{k+1} \binom{k+1}{t}(-1)^t-\frac{(-1)^{k+1}p}{k+1}+O\left(p^2\right)\\
                      =&\frac{(-1)^{k+2}p}{k+1}+O\left(p^2\right),
                  \end{split}
              \end{equation*}
              which is the expected formula.
    \end{enumerate}
\end{proof}

\begin{proposition}[Inverse relations of Bell polynomials]
    For an integer $n\geq 1$ and $y_1,\cdots,y_n\in\bbZ$, we have $\bbB_n^{-1}(y_1,\cdots,y_n)=(y_1^\vee,\cdots,y_n^\vee)$, where
    $$y_i^\vee=\sum_{k=1}^i(-1)^{k-1}(k-1)!B_{i,k}(y_1,\ldots ,y_{i-k+1}),\ i=1,\cdots,n.$$
\end{proposition}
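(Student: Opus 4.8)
The plan is to prove the equivalent statement that whenever $\bbB_n(x_1,\dots,x_n)=(y_1,\dots,y_n)$ one has $x_i=y_i^\vee$ for every $1\le i\le n$; this is the classical logarithm--exponential inversion pair for Bell polynomials, and the cleanest route is through exponential generating functions read modulo $t^{n+1}$. So fix $(y_1,\dots,y_n)\in\bbZ^n$, put $(x_1,\dots,x_n):=\bbB_n^{-1}(y_1,\dots,y_n)$, so that $y_i=B_i(x_1,\dots,x_i)$ for $1\le i\le n$, and set $X(t):=\sum_{i=1}^n x_i\frac{t^i}{i!}$ and $Y(t):=\sum_{i=1}^n y_i\frac{t^i}{i!}$ in $\bbQ[[t]]$.

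First I would record the complete-Bell generating function obtained by summing the incomplete-Bell generating function $\frac1{k!}\bigl(\sum_{m\ge1}x_m\frac{t^m}{m!}\bigr)^k=\sum_{j\ge k}B_{j,k}(x_1,\dots,x_{j-k+1})\frac{t^j}{j!}$ over all $k\ge0$, which gives $\exp\bigl(\sum_{m\ge1}x_m\frac{t^m}{m!}\bigr)=1+\sum_{i\ge1}B_i(x_1,\dots,x_i)\frac{t^i}{i!}$. Since $B_{j,k}(x_1,\dots,x_{j-k+1})$ (hence also $B_j(x_1,\dots,x_j)$) involves only the variables of index $\le j$, replacing the full series by its truncation $X(t)$ leaves every coefficient of $t^i$ with $i\le n$ unchanged; therefore $\exp\bigl(X(t)\bigr)\equiv 1+Y(t)\pmod{t^{n+1}}$. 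Because $X(t),Y(t)$ have zero constant term and $\exp,\log$ are mutually inverse on power series with the relevant constant terms and are compatible with reduction modulo $t^{n+1}$, this congruence upgrades to $X(t)\equiv\log\bigl(1+Y(t)\bigr)\pmod{t^{n+1}}$.

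Next I would expand $\log\bigl(1+Y(t)\bigr)=\sum_{k\ge1}\frac{(-1)^{k-1}}{k}Y(t)^k$. Since $Y(t)=O(t)$, only the terms with $k\le n$ survive modulo $t^{n+1}$; writing $\frac{(-1)^{k-1}}{k}=\frac{(-1)^{k-1}(k-1)!}{k!}$ and applying the incomplete-Bell generating function to the sequence $(y_i)$ gives $\frac{(-1)^{k-1}}{k}Y(t)^k\equiv(-1)^{k-1}(k-1)!\sum_{i=k}^{n}B_{i,k}(y_1,\dots,y_{i-k+1})\frac{t^i}{i!}\pmod{t^{n+1}}$, again using that truncating $Y$ to degree $\le n$ does not affect coefficients of $t^{\le n}$. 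Summing over $1\le k\le n$ and interchanging the two finite sums, the coefficient of $\frac{t^i}{i!}$ in $\log\bigl(1+Y(t)\bigr)$ becomes $\sum_{k=1}^{i}(-1)^{k-1}(k-1)!\,B_{i,k}(y_1,\dots,y_{i-k+1})=y_i^\vee$. Comparing with $X(t)=\sum_{i=1}^n x_i\frac{t^i}{i!}$ modulo $t^{n+1}$ yields $x_i=y_i^\vee$ for all $i\le n$, i.e.\ $\bbB_n^{-1}(y_1,\dots,y_n)=(y_1^\vee,\dots,y_n^\vee)$.

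The whole argument is formal; the only points requiring (routine) care are the bookkeeping that truncating the inner series to indices $\le n$ leaves every coefficient of $t^{\le n}$ untouched, the legitimacy of passing between $\exp$ and $\log$ modulo $t^{n+1}$, and the interchange of the two finite summations. I do not expect any genuine obstacle. Integrality of the components $y_i^\vee$ needs no separate argument: it follows either from the preceding proposition that $\bbB_n^{-1}$ maps $\bbZ^n$ to $\bbZ^n$, or directly from the fact that each $B_{i,k}$ is a polynomial with non-negative integer coefficients.
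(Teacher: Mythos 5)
Your argument is correct. The one thing to note is that the paper does not actually prove this proposition at all: its ``proof'' is a single citation to Riordan's book (formula (50) of Chapter 2), so any honest comparison is between your self-contained argument and the classical source rather than with the paper's text. Your route is the standard one and is sound: the complete Bell polynomials are the coefficients of $\exp\bigl(\sum_{m\geq 1}x_m t^m/m!\bigr)$, truncation past degree $n$ is harmless because $B_{j,k}$ involves only variables of index at most $j$, and applying $\log(1+Y)=\sum_{k\geq 1}\frac{(-1)^{k-1}}{k}Y^k$ together with the incomplete-Bell generating function for $Y(t)^k/k!$ produces exactly the claimed coefficients $y_i^\vee$. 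Your use of the previously established bijectivity of $\bbB_n$ on $\bbZ^n$ to make sense of $\bbB_n^{-1}(y_1,\dots,y_n)$ before identifying its components is the right way to phrase the statement, and the remark that integrality of $y_i^\vee$ is automatic (the $B_{i,k}$ have non-negative integer coefficients) closes the only loose end. What your write-up buys over the paper's citation is that the proposition becomes verifiable within the article itself, with the only inputs being the generating-function definition of $B_{n,k}$ already stated in the introduction and elementary formal power series manipulations; the cost is half a page of routine bookkeeping that the authors chose to outsource.
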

\begin{proof}
    See \cite[(50) of Chapter 2]{Riordan1980}.
\end{proof}

Using this proposition for $n=p+i$ , $(y_1,\cdots,y_{p+i})=(\delta_1,\cdots,\delta_{p+i})$ and comparing with \Cref{it:x}, we can conclude:
\begin{proposition}\label{coro:38801}
    For a prime $p\geq 3$ and an integer $1\leq i\leq p-1$, we have
    $$\sum_{m=1}^{i+p} (-1)^{m-1}(m-1)!\stirling{i+p}{m}_{\leq p-1}\equiv \frac{(-1)^{i+1}p}{i}\bmod{p^2}.$$
\end{proposition}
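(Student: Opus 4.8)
The plan is to obtain the statement as an immediate corollary of the two preceding propositions, namely the inverse relation for Bell polynomials and \Cref{it:x}, which computes $\bbB_{p+i}^{-1}(\delta_1,\cdots,\delta_{p+i})$ explicitly.

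First I would apply the inverse relation of Bell polynomials with $n=p+i$ and $(y_1,\cdots,y_n)=(\delta_1,\cdots,\delta_n)$, where $\delta_j=1$ for $j\leq p-1$ and $\delta_j=0$ for $j\geq p$. This expresses $\bbB_{p+i}^{-1}(\delta_1,\cdots,\delta_{p+i})=(\delta_1^\vee,\cdots,\delta_{p+i}^\vee)$ with $\delta_l^\vee=\sum_{k=1}^{l}(-1)^{k-1}(k-1)!\,B_{l,k}(\delta_1,\cdots,\delta_{l-k+1})$; in particular the last component is
$$\delta_{p+i}^\vee=\sum_{m=1}^{p+i}(-1)^{m-1}(m-1)!\,B_{p+i,m}(\delta_1,\cdots,\delta_{p+i-m+1}).$$
Then I would identify each Bell value with a $(p-1)$-restricted Stirling number: the polynomial $B_{p+i,m}$ is evaluated at the $p+i-m+1$ arguments $\delta_1,\cdots,\delta_{p+i-m+1}$. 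If $m\leq i+1$ then $p+i-m+1\geq p$, so these arguments are $\delta_1=\cdots=\delta_{p-1}=1$ followed by zeros, which is exactly $B_{p+i,m}(\overbrace{1,\cdots,1}^{p-1},0,\cdots,0)=\stirling{p+i}{m}_{\leq p-1}$ by definition; if $m\geq i+2$ then $p+i-m+1\leq p-1$, so all arguments equal $1$ and $B_{p+i,m}(1,\cdots,1)=\stirling{p+i}{m}=\stirling{p+i}{m}_{\leq p-1}$, the last equality holding because $(p+i)-m+1\leq p-1$. Hence $\delta_{p+i}^\vee$ is precisely the left-hand side of the proposition.

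Finally I would compare this with \Cref{it:x}, which gives $\bbB_{p+i}^{-1}(\delta_1,\cdots,\delta_{p+i})=(1,0,\cdots,0,-1,\frakx_{p+1},\cdots,\frakx_{p+i})$; its $(p+i)$-th component is the integer $\frakx_{p+i}$, which satisfies $\frakx_{p+i}\equiv\frac{(-1)^{i+1}p}{i}\bmod{p^2}$. Equating the two expressions for the last component of $\bbB_{p+i}^{-1}(\delta_1,\cdots,\delta_{p+i})$ yields the claim. I do not expect any genuine obstacle here: the only point needing care is the boundary in the case split ($m=i+1$ versus $m=i+2$), to make sure the two definitional branches of $\stirling{\,\cdot\,}{\,\cdot\,}_{\leq p-1}$ are matched consistently; once that is checked, the proposition follows by direct substitution.
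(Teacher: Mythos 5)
Your proposal is correct and is exactly the paper's argument: the paper derives \Cref{coro:38801} in one line by applying the inverse relation of Bell polynomials to $n=p+i$, $(y_1,\cdots,y_{p+i})=(\delta_1,\cdots,\delta_{p+i})$ and comparing the last component with \Cref{it:x}. Your careful case split ($m\leq i+1$ versus $m\geq i+2$) to match $B_{p+i,m}(\delta_1,\cdots,\delta_{p+i-m+1})$ with $\stirling{p+i}{m}_{\leq p-1}$ is a correct elaboration of a detail the paper leaves implicit.
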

\subsection{Harmonic numbers and the Stirling numbers of the second kind}
In this paragraph, we prove the following $\mathrm{mod}\  p$ harmonic number identity:
\begin{theorem}\label{maintheorm1}
    For a prime $p\geq 3$ and an integer $1\leq k\leq p-1 $, we have
    $$\sum_{i=1}^k\frac{1}{(k-i)!}\left(\sum_{m=1}^p\frac{(p-1)!}{(p-m)!(i+p)!}\stirling{i+p}{m}_{\leq p-1}\right)-\frac{1}{(k-1)!}\equiv -\frac{1}{k!}H_k \bmod{p},$$
    where $H_k$ is the harmonic number.
\end{theorem}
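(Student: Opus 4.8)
The plan is to evaluate each inner sum $T_i:=\sum_{m=1}^p\frac{(p-1)!}{(p-m)!(i+p)!}\stirling{i+p}{m}_{\leq p-1}$ modulo $p$ in closed form, and then to recognize the resulting combination of the $T_i$ as the classical binomial formula for $H_k$.

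First I would simplify $T_i$. Since $1\le i\le p-1$, the only multiple of $p$ among $1,\dots,i+p$ is $p$ itself, so $(i+p)!=(p-1)!\,p\prod_{j=1}^i(p+j)$ and hence $\frac{(p-1)!}{(p-m)!(i+p)!}=\frac{1}{p\,(p-m)!\,\prod_{j=1}^i(p+j)}$. By \Cref{lem:36099} I may write $\stirling{i+p}{m}_{\leq p-1}=p\,c_{i,m}$ with $c_{i,m}\in\bbZ$, so that $T_i=\bigl(\prod_{j=1}^i(p+j)\bigr)^{-1}\sum_{m=1}^p\frac{c_{i,m}}{(p-m)!}$ is a $p$-adic integer. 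Reducing modulo $p$ by Wilson's theorem — $\prod_{j=1}^i(p+j)\equiv i!$ and $(p-m)!\equiv(-1)^m/(m-1)!$ — I obtain $T_i\equiv\frac1{i!}\sum_{m=1}^p(-1)^m(m-1)!\,c_{i,m}\equiv-\frac1{p\,i!}\sum_{m=1}^p(-1)^{m-1}(m-1)!\stirling{i+p}{m}_{\leq p-1}\bmod p$, the last rewriting being legitimate since every summand is divisible by $p$.

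Next I would insert the Stirling congruences. Subtracting \Cref{lem:52893} from \Cref{coro:38801} gives $\sum_{m=1}^p(-1)^{m-1}(m-1)!\stirling{i+p}{m}_{\leq p-1}\equiv 0\bmod p^2$ when $i=1$ and $\equiv\frac{(-1)^{i+1}p}{i}\bmod p^2$ when $2\le i\le p-1$; plugging this into the formula above yields $T_1\equiv 0$ and $T_i\equiv\frac{(-1)^i}{i\cdot i!}\bmod p$ for $i\ge2$. Hence $\sum_{i=1}^k\frac{T_i}{(k-i)!}-\frac1{(k-1)!}\equiv\sum_{i=2}^k\frac{(-1)^i}{i\cdot i!\,(k-i)!}-\frac1{(k-1)!}\bmod p$. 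Multiplying by the unit $k!$ and using $\frac{k!}{i\cdot i!(k-i)!}=\frac1i\binom ki$ together with $k=-\frac{(-1)^1}{1}\binom k1$, the right-hand side collapses to $\sum_{i=1}^k\frac{(-1)^i}{i}\binom ki=-\sum_{i=1}^k\frac{(-1)^{i-1}}{i}\binom ki$. Finally the classical identity $\sum_{i=1}^k\frac{(-1)^{i-1}}{i}\binom ki=H_k$ — which I would verify by noting that $f(k):=\sum_{i=1}^k\frac{(-1)^{i-1}}{i}\binom ki$ satisfies $f(k)-f(k-1)=\frac1k\sum_{i=1}^k(-1)^{i-1}\binom ki=\frac1k$ via Pascal's rule and $\frac1i\binom{k-1}{i-1}=\frac1k\binom ki$, so $f(k)=H_k$ — gives $k!\cdot(\mathrm{LHS})\equiv-H_k$, i.e. $\mathrm{LHS}\equiv-\frac1{k!}H_k\bmod p$, as claimed.

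As for the main obstacle: granting \Cref{lem:36099}, \Cref{lem:52893} and \Cref{coro:38801}, nothing deep remains; the delicate part is the bookkeeping in the first step — isolating the single factor of $p$ hidden in both $(i+p)!$ and $\stirling{i+p}{m}_{\leq p-1}$ so that the reduction of $T_i$ modulo $p$ genuinely consumes the modulo-$p^2$ information — together with recognizing that the surviving sum is precisely the binomial expansion of $H_k$.
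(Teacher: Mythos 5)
Your proof is correct and follows essentially the same route as the paper: you re-derive the congruence $T_1\equiv 0$, $T_i\equiv\frac{(-1)^i}{i\cdot i!}$ (the paper's \Cref{lem:35904}) from \Cref{lem:36099}, \Cref{lem:52893} and \Cref{coro:38801} via the same Wilson-theorem reduction, and then finish with the same binomial identity $\sum_{i=1}^k\frac{(-1)^{i-1}}{i}\binom{k}{i}=H_k$. The only differences are cosmetic: you prove that classical identity by telescoping rather than citing \cite[(9.2)]{Boyadzhiev2018}, and you do not need the paper's separate treatment of $p=3$ or $k=1$.
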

This theorem can be deduced from the following proposition:
\begin{proposition}\label{lem:35904}
    For a prime $p\geq 3$ and an integer $1\leq i\leq p-1$, we have
    $$\sum_{m=1}^p\frac{(p-1)!}{(p-m)!(i+p)!}\stirling{i+p}{m}_{\leq p-1}\equiv \begin{cases}0,& i=1\\\frac{(-1)^i}{i!\cdot i},& 2\leq i\leq p-1\end{cases} \bmod{p}.$$
\end{proposition}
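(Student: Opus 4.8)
The plan is to reduce the left-hand side to the alternating sum $\Sigma_i:=\sum_{m=1}^{p}(-1)^{m-1}(m-1)!\stirling{i+p}{m}_{\leq p-1}$, working modulo $p^2$ throughout, and then to evaluate $\Sigma_i\bmod p^2$ by combining \Cref{coro:38801} and \Cref{lem:52893}. First I would isolate the single factor of $p$ hidden in $(i+p)!$: since $1\le i\le p-1$ we may write $(i+p)!=p\cdot c_i$ with $c_i:=(p-1)!\prod_{j=1}^{i}(p+j)$, and by Wilson's theorem $c_i\equiv -i!\bmod p$, so $c_i$ is a unit in $\bbZ_{(p)}$. Consequently the left-hand side of the proposition equals $\Sigma_i^{\flat}/(p\,c_i)$, where $\Sigma_i^{\flat}:=\sum_{m=1}^{p}\frac{(p-1)!}{(p-m)!}\stirling{i+p}{m}_{\leq p-1}$, and it remains to pin down $\Sigma_i^{\flat}$ modulo $p^2$ — which, incidentally, will show a posteriori that the left-hand side lies in $\bbZ_{(p)}$, so that the congruence makes sense.

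Next I would replace the coefficient $\frac{(p-1)!}{(p-m)!}$ by $(-1)^{m-1}(m-1)!$ inside $\Sigma_i^{\flat}$. Indeed $\frac{(p-1)!}{(p-m)!}=\prod_{j=1}^{m-1}(p-j)\equiv(-1)^{m-1}(m-1)!\bmod p$, while \Cref{lem:36099} gives $p\mid\stirling{i+p}{m}_{\leq p-1}$ for every $1\le m\le p$; hence each summand of $\Sigma_i^{\flat}$ agrees with the corresponding summand of $\Sigma_i$ modulo $p^2$, so $\Sigma_i^{\flat}\equiv\Sigma_i\bmod p^2$.

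Then I would compute $\Sigma_i\bmod p^2$ by writing it as the difference of the full sum $\sum_{m=1}^{i+p}(-1)^{m-1}(m-1)!\stirling{i+p}{m}_{\leq p-1}$, evaluated by \Cref{coro:38801}, and the tail $\sum_{m=p+1}^{i+p}(-1)^{m-1}(m-1)!\stirling{i+p}{m}_{\leq p-1}$, evaluated by \Cref{lem:52893}. For $i=1$ both are $\equiv p\bmod p^2$, so $\Sigma_1\equiv 0\bmod p^2$, and hence $\Sigma_1^{\flat}/(p\,c_1)\equiv 0\bmod p$. For $2\le i\le p-1$ the tail vanishes modulo $p^2$ and the full sum is $\frac{(-1)^{i+1}p}{i}$ modulo $p^2$; writing $\Sigma_i=p\,\Sigma_i'$ with $\Sigma_i'\equiv(-1)^{i+1}i^{-1}\bmod p$, this gives
$$\frac{\Sigma_i^{\flat}}{p\,c_i}\equiv\frac{\Sigma_i'}{c_i}\equiv\frac{(-1)^{i+1}i^{-1}}{-i!}=\frac{(-1)^{i}}{i!\cdot i}\bmod p,$$
which is the asserted value.

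The delicate point that dictates the whole strategy is the need to work modulo $p^2$ rather than modulo $p$: since dividing by $(i+p)!$ costs a factor $p^{-1}$, a mere mod-$p$ grip on $\Sigma_i^{\flat}$ would be worthless, and one genuinely needs both the sharp mod-$p^2$ evaluations of \Cref{coro:38801} and \Cref{lem:52893} and the $p$-divisibility of the restricted Stirling numbers from \Cref{lem:36099}. Everything else is routine manipulation of falling factorials together with Wilson's theorem.
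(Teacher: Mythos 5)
Your proposal is correct and follows essentially the same route as the paper's proof: isolate the single factor of $p$ in $(i+p)!$ via Wilson's theorem, use \Cref{lem:36099} to justify replacing $\frac{(p-1)!}{(p-m)!}$ by $(-1)^{m-1}(m-1)!$ at the level of congruences modulo $p^2$, and evaluate the resulting alternating sum as the difference of the full sum (\Cref{coro:38801}) and the tail (\Cref{lem:52893}). The only cosmetic difference is that the paper disposes of $p=3$ by direct computation and assumes $p\geq 5$ thereafter, whereas your argument runs uniformly since the cited lemmas are all stated for $p\geq 3$.
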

\begin{proof}
    For $p=3$, the result follows from direct calculation. Therefore we assume $p\geq 5$. To prove the proposition, it is equivalent to prove the congruence
    \begin{equation}\label{wang1}\frac{i!\cdot i}{(i+p)!}\sum_{m=1}^p\frac{(p-1)!}{(p-m)!}\stirling{i+p}{m}_{\leq p-1}\equiv \begin{cases}0,&\ i=1\\(-1)^i,&\ i\geq 2\end{cases} \bmod{p}.\end{equation}
    Notice that
    \begin{align*}
        \frac{i!}{(i+p)!}= & \frac{i!}{(p-1)!\cdot p\cdot (p+1)\cdots(p+i)}       \\
        =                  & \frac{i!}{\left(-1+O(p)\right)p\left(i!+O(p)\right)}
        =                   -\frac{1}{p}+O(1)
    \end{align*}
    and $$\frac{(p-1)!}{(p-m)!}\equiv (-1)^{m-1}(m-1)! \bmod{p}.$$ By \Cref{lem:36099} we can reduce the congruence \Cref{wang1} to
    $$-\frac{i}{p}\sum_{m=1}^p (-1)^{m-1}(m-1)!\stirling{i+p}{m}_{\leq p-1}\equiv \begin{cases}0,&\ i=1\\(-1)^i,&\ i\geq 2\end{cases} \bmod{p}.$$
    By \Cref{lem:52893}, we have
    \begin{align*}
               & \sum_{m=1}^p (-1)^{m-1}(m-1)!\stirling{i+p}{m}_{\leq p-1}                                                                                                                                                      \\ =&\sum_{m=1}^{i+p} (-1)^{m-1}(m-1)!\stirling{i+p}{m}_{\leq p-1}-\sum_{m=p+1}^{i+p} (-1)^{m-1}(m-1)!\stirling{i+p}{m}_{\leq p-1} \\
        \equiv & \begin{cases}\displaystyle\sum_{m=1}^{1+p} (-1)^{m-1}(m-1)!\stirling{1+p}{m}_{\leq p-1}-p,&\ i=1\\\displaystyle\sum_{m=1}^{i+p} (-1)^{m-1}(m-1)!\stirling{i+p}{m}_{\leq p-1},&\ i\geq 2\end{cases} \bmod{p^2}.
    \end{align*}
    Then the result follows from \Cref{coro:38801}.
\end{proof}
\begin{proof}[Proof of \Cref{maintheorm1}]
    If $k=1$, the theorem is trivial. For $k\geq 2$,  by \Cref{lem:35904}, we have
    \begin{align*}
               & \sum_{i=1}^k\frac{1}{(k-i)!}\left(\sum_{m=1}^p\frac{(p-1)!}{(p-m)!(i+p)!}\stirling{i+p}{m}_{\leq p-1}\right)-\frac{1}{(k-1)!} \\
        \equiv & \sum_{i=2}^k \frac{1}{(k-i)!}\frac{(-1)^i}{i!\cdot i}-\frac{1}{(k-1)!}\bmod{p}                                                \\ =&\frac{1}{k!}\sum_{i=1}^k \binom{k}{i}\frac{(-1)^i}{i}=-\frac{1}{k!}H_k,
    \end{align*}
    where the last equality follows from \cite[(9.2)]{Boyadzhiev2018}.
\end{proof}

\section{Truncated expansion of $\zeta_{p^2}$: a reflection on transfinite Newton algorithm}
Let $P(T)=a_0T^n+a_1 T^{n-1}+\cdots+ a_n\in \bbL_p[T] $ be a polynomial with $a_n\neq 0$.
\begin{definition}For any $\mu\in\bbL_p$, we call $P_{\mu}(T)=P(T+ \mu)$ the $\mu$-perturbed polynomial of $P(T)$.\end{definition}
Let $P_{\mu}(T)=\sum_{k=0}^n=b_kT^{n-k}$. Then for $0\leq k\leq n$, we have $$b_k=\sum_{j=0}^{k}a_{k-j}\binom{n-k+j}{j}\mu^j.$$
\subsection{Transfinite Newton algorithm  and Newton polygon}
In \cite{WangYuan2021}, we expanded a result of Kedlaya (cf. \cite[Proposition 1]{Kedlaya2001}) into a transfinite Newton algorithm to find the canonical expansion of a root of the polynomial $P(T)\in \bbL_p[T]$ in the $p$-adic Mal'cev-Neumann field $\bbL_p$.
\begin{definition}[Newton polygon]\leavevmode Suppose $(K,v)$ is a valued field with value group $\bbQ$. Let $J(T)=\sum_{i=0}^n a_{n-i}T^i\in K[T]$ be a nonzero polynomial. For $0\leq i\leq n$, we have the points $(i, v(a_{i}))\in \bbN\times \bar{\bbR}$, where $\bar{\bbR}= \bbR\cup\{+\infty\}$. If $a_i=0$, $(i,v(a_i))$ is regarded as $Y_{+\infty}$, the point at infinity of the positive vertical axis.
    \begin{enumerate}
        \item Define the \textbf{Newton polygon} $\Newt{J}$ of $J(T)$ as the lower boundary of the convex hull of the points $(i,v(a_i))$ for $i=0,\cdots,n$. As a consequence, $\Newt{J}$ is a function on $\bbR_{\geq 0}$ with values in $\bar{\bbR}$.
        \item The integers $m$ such that $(m,v(a_m))$ are vertices of $\Newt{J}$ are called the \textbf{breakpoints}, and we denote by $m_{\max}^J$ the \textbf{largest breakpoint} less than $n$.
        \item Given two adjacent breakpoints $m^J_1<m^J_2$, denote by $s^J_{m_1}=\frac{v(a_{m^J_2})-v(a_{m^J_1})}{m^J_2-m^J_1}$, the \textbf{slope} of constituent segment of $\Newt{J}$ with endpoints $(m^J_1,v(a_{m^J_1}))$ and $(m^J_2,v(a_{m^J_2}))$. The \textbf{largest slope} is denoted by $$s_{\max}^J= s_{m_{\max}^J}^J=\frac{v(a_n)-v(a_{m_{\max}^J})}{n-m_{\max}^J}.$$ If $(n,v(a_n))=Y_{+\infty}$ (i.e. $a_n=0$)\footnote{Notice that if $m$ is a breakpoint, then $(m,v(a_m))=Y_{+\infty}\Leftrightarrow m=n$ and $a_n=0$.}, we regard $s_{\max}^J=\infty$. Thus, $s_{\max}^\bullet$ is a map from $K[T]$ to $\bbQ\cup\{\infty\}$.
    \end{enumerate}
    We will omit the superscript $J$ if there is no confusion.
\end{definition}
Apply the above notions to $P(T)=a_0T^n+a_1 T^{n-1}+\cdots+ a_n\in \bbL_p[T] $ and we set $s=s_{\max}^P$ and $m=m_{\max}^{P}$.
\begin{definition}
    We define the \textit{residue polynomial} associated to $P(T)$ by
    \[\mathrm{Res}_P(T)=\sum_{k=0}^{n-m}C_0\left(a_{n-k}p^{-v_p(a_{m})-s(n-m-k)}\right)T^k \in \bar{\bbF}_p[T] .\]
\end{definition}
The transfinite Newton algorithm can be summarized in the following pseudo-code:
\begin{algorithm}[H]
    \caption{transfinite Newton algorithm for \(\bbL_p\)}    \begin{algorithmic}
        \INPUT A non-constant polynomial \(P(T)\in \bbL_p[T]\)
        \OUTPUT A root of \(P(T)\) in \(\bbL_p\)
        \Function{Newton}{$P$}
        \State \(r\gets 0\)
        \State \(s_{\max}\gets 0,m_{\max}\gets 0,c\gets 0\)
        \State \(\mathrm{Res}_{\Phi}(T)\gets 0\)
        \State \(\Phi(T)\gets f(T)\) \Comment{We denote the coefficient of \(T^i\) in \(\Phi\) as \(b_{n-i}\),where \(n=\text{deg}(\Phi)\).}
        \While{\(\Phi(0)\neq 0\)}
        \State \(m_{\max}\gets m_{\max}^{\Phi}\)
        \State \(s_{\max}\gets s_{\max}^{\Phi}\)
        \State \(\mathrm{Res}_{\Phi}(T)\gets\sum_{k=0}^{n-m_{\max}}C_{v_p(b_m)+s_{\max}(n-m_{\max}-k)}\left(b_{n-k}\right)T^k\)
        \State \(c\gets\) any root of \(\mathrm{Res}_{\Phi}(T)\) in \(\bar{\bbF}_p\)
        \State \(r\gets r+[c]\cdot p^{s_{\max}}\)
        \State \(\Phi(T)\gets \Phi(T+[c]\cdot p^{s_{\max}})\)
        \EndWhile
        \State \Return \(r\)
        \EndFunction
    \end{algorithmic}
\end{algorithm}

\begin{definition}
    We call the value of $\Phi(T)$ (resp. $\mathrm{Res}_{\Phi}(T)$ and $r$) in the above pseudo-code after the loop iterates for $l$ times, the $l$-th approximation polynomial (resp. the $l$-th residue polynomial and the $l$-th approximation of a root of $\Phi(T)$). In particular, the $l$-th approximation polynomial is the $\mu$-perturbed polynomial by taking $\mu$ to be the $l$-th approximation of a root of $\Phi(T)$.

\end{definition}

\subsection{A variant of transfinite Newton algorithm}\label{transfinite}
In this paragraph, we will introduce a variant of transfinite Newton algorithm and sketch the proof of the \Cref{coro:40822} via this new method.
\subsubsection{$\left(1+\frac{2}{p-1}\right)$-truncated expansion of $\Lambda_{p-1}^p-1$}
Let $\lambda=\zetax p^{\frac{1}{p(p-1)}}$ and $\Lambda_{p-1}=\sum_{k=0}^{p-1}\frac{\lambda^k}{[k!]}$. We establish the $\left(1+\frac{2}{p-1}\right)$-truncated expansion of $\Lambda_{p-1}^p-1$ (cf. \Cref{lem:43810}), which will be used to give the $p\aleph_0$-terms of the truncated expansion of $\zeta_{p^2}$.

Let $\widehat{\Lambda}_{p-1}=\sum_{k=0}^{p-1}\frac{\lambda^k}{k!}$. Since $\widehat{\Lambda}_{p-1}-\Lambda_{p-1}\in O\left(p^{1+\frac{2}{p(p-1)}}\right)$, we have
\[\Lambda_{p-1}^p-1= \widehat{\Lambda}_{p-1}^p-1+O\left(p^2\right).\]
Using binomial expansion, we can write
\begin{equation}\label{eq:64747}
    \widehat{\Lambda}_{p-1}^p-1= \left(\sum_{k=1}^{p-1}\frac{\lambda^k}{k!}\right)^p+\sum_{j=1}^{p-1}\binom{p}{j}\left(\sum_{k=1}^{p-1}\frac{\lambda^k}{k!}\right)^j.
\end{equation}
Thus, we reduce to study the $\left(1+\frac{2}{p-1}\right)$-truncated expansions of $\left(\sum_{k=1}^{p-1}\frac{\lambda^k}{k!}\right)^p$ and $\sum_{j=1}^{p-1}\binom{p}{j}\left(\sum_{k=1}^{p-1}\frac{\lambda^k}{k!}\right)^j$, which will be established in \Cref{termp} and \Cref{termj} respectively.

\begin{lemma}\label{termp}
    The $\left(1+\frac{2}{p-1}\right)$-truncated expansion for $\left(\sum_{k=1}^{p-1}\frac{\lambda^k}{k!}\right)^p$ is :
    $$\left(\sum_{k=1}^{p-1}\frac{\lambda^k}{k!}\right)^p=\sum_{k=1}^{p-1}\frac{\lambda^{kp}}{k!}+\sum_{n=1}^{p-1}\frac{p!}{(n+p)!}\stirling{n+p}{p}_{\leq p-1}\lambda^{n+p}+\newerr.$$
\end{lemma}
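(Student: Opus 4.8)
The plan is to reduce the whole computation to the generating function for the $(p-1)$-restricted Stirling numbers of the second kind together with one Frobenius identity in $\bbF_p[t]$, thereby avoiding any attempt to ``expand via $e^\lambda$'' (which is illegitimate here, since $\lambda^m/m!$ has negative valuation for $m\geq p$). Taking $r=p-1$, $k=p$ and $t=\lambda$ in that generating function, and using $\stirling{n}{p}_{\leq p-1}=0$ for $n>p(p-1)$, one gets the \emph{exact} polynomial identity
\[\left(\sum_{m=1}^{p-1}\frac{\lambda^m}{m!}\right)^p=\sum_{n=p}^{p(p-1)}\frac{p!}{n!}\stirling{n}{p}_{\leq p-1}\lambda^n=:\sum_{n=p}^{p(p-1)}c_n\lambda^n .\]
By \Cref{it:4} (with restriction $p-1$ and lower index $p$) each $c_n$ lies in $\bbZ_{(p)}$, hence has a well-defined reduction $\bar c_n\in\bbF_p$; moreover $v_p(c_n\lambda^n)\geq \tfrac{n}{p(p-1)}$, and $v_p(c_n\lambda^n)\geq 1+\tfrac{n}{p(p-1)}$ whenever $\bar c_n=0$, since $v_p(\lambda)=\tfrac1{p(p-1)}$.

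The key step is to identify $\bar c_n$. Writing $g(t)=\sum_{m=1}^{p-1}t^m/m!\in\bbZ_{(p)}[t]$, we have $g(t)^p=\sum_n c_n t^n\in\bbZ_{(p)}[t]$, so reducing modulo $p$ and using that the Frobenius is a ring endomorphism of $\bbF_p[t]$ gives
\[\sum_n\bar c_n t^n=\bar g(t)^p=\left(\sum_{m=1}^{p-1}\overline{\tfrac1{m!}}\,t^m\right)^{\!\!p}=\sum_{m=1}^{p-1}\left(\overline{\tfrac1{m!}}\right)^{\!p}t^{mp}=\sum_{m=1}^{p-1}\overline{\tfrac1{m!}}\,t^{mp},\]
the last equality by Fermat's little theorem (and $\bar g(t)$ makes sense because $1/m!$ is a $p$-adic unit for $1\leq m\leq p-1$). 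Hence $\bar c_{mp}=\overline{1/m!}$ for $1\leq m\leq p-1$, while $\bar c_n=0$ for every $n$ in range not divisible by $p$; in particular $c_p=\stirling{p}{p}_{\leq p-1}=1$ exactly, $c_{mp}\equiv \tfrac1{m!}\bmod p$ for $2\leq m\leq p-1$, and $p\mid c_n$ otherwise.

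It then remains to separate the ``small'' range from the tail. The terms with $p\leq n\leq 2p-1$ contribute $\lambda^p+\sum_{n=p+1}^{2p-1}c_n\lambda^n$, which after the substitution $n\mapsto n+p$ is exactly the first two sums in the claimed formula. For $2p\leq n\leq p(p-1)$: if $p\nmid n$ then $\bar c_n=0$ and $n\geq 2p+1$, so $v_p(c_n\lambda^n)\geq 1+\tfrac{n}{p(p-1)}>1+\tfrac2{p-1}$; if $n=mp$ with $2\leq m\leq p-1$ then $v_p(c_{mp}-\tfrac1{m!})\geq 1$, so $v_p\bigl((c_{mp}-\tfrac1{m!})\lambda^{mp}\bigr)\geq 1+\tfrac{m}{p-1}\geq 1+\tfrac2{p-1}$. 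Therefore $\sum_{n=2p}^{p(p-1)}c_n\lambda^n=\sum_{m=2}^{p-1}\tfrac{\lambda^{mp}}{m!}+\newerr$, and combining the two parts yields the asserted $(1+\tfrac2{p-1})$-truncated expansion.

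The only genuinely delicate point is the Frobenius computation, and there the thing to be careful about is $p$-integrality: one must know $c_n\in\bbZ_{(p)}$ before reducing $g(t)^p$ modulo $p$ coefficientwise, which is precisely what \Cref{it:4} provides. Everything afterwards — recording which reductions vanish, and counting valuations against the cutoff $1+\tfrac2{p-1}$ — is routine, using only $v_p(\lambda)=\tfrac1{p(p-1)}$ together with the bounds $m\geq 2$, respectively $n\geq 2p+1$, on the two pieces. I do not expect to need the sharper congruences \Cref{lem:36099} or \Cref{coro:38801} for this lemma, though \Cref{lem:36099} does re-derive $p\mid c_n$ for $p+1\leq n\leq 2p-1$ as a special case.
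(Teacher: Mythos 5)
Your proof is correct, and it shares the paper's overall skeleton — both start from the generating-function identity $\left(\sum_{m=1}^{p-1}\lambda^m/m!\right)^p=\sum_{n=p}^{p(p-1)}\frac{p!}{n!}\stirling{n}{p}_{\leq p-1}\lambda^n$ and then run the same valuation bookkeeping against the cutoff $1+\frac{2}{p-1}$ — but you handle the central congruence step by a genuinely different device. The paper proves $p\mid \frac{p!}{n!}\stirling{n}{p}_{\leq p-1}$ for $p\nmid n$ by noting that every multinomial coefficient $\binom{p}{j_1,\dots,j_{n-p+1}}$ in the defining sum is divisible by $p$ unless some $j_i=p$ (which forces $p\mid n$), and then separately imports the congruence $\frac{p!}{(kp)!}\stirling{kp}{p}_{\leq p-1}=\frac{1}{k!}+O(p)$ from \cite[Lemma 3.17]{WangYuan2021}. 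You obtain both facts at once by reducing $g(t)^p$ coefficientwise modulo $p$ and invoking the Frobenius endomorphism of $\bbF_p[t]$ together with Fermat's little theorem; this is cleaner and makes the lemma self-contained, with no appeal to the earlier paper. The $p$-integrality needed to justify the reduction is indeed available — from \Cref{it:4} as you say, or more simply because $g(t)\in\bbZ_{(p)}[t]$ already forces $g(t)^p\in\bbZ_{(p)}[t]$. The only cosmetic slip is the phrase ``exactly the first two sums in the claimed formula'' for the range $p\leq n\leq 2p-1$: that range supplies only the $k=1$ term of the first sum together with all of the second sum, the remaining terms $\lambda^{mp}/m!$ with $m\geq 2$ coming from the tail $n\geq 2p$ — but since you then recombine the two pieces correctly, the argument stands as written.
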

\begin{proof}
    For the term $ \left(\sum_{k=1}^{p-1}\frac{\lambda^k}{k!}\right)^p$, by the generating formula (\ref{genfunctform}), we have
    $$\left(\sum_{k=1}^{p-1}\frac{\lambda^k}{k!}\right)^p=\sum_{n=p}^{p(p-1)}\frac{p!}{n!}\stirling{n}{p}_{\leq p-1}\lambda^n.$$
    Let $\delta_j=\begin{cases}1,&\text{ if } j\leq p-1,\\0,&\text{ if } j\geq p,\end{cases}$ and we have
    \begin{align*}
        \frac{p!}{n!}\stirling{n}{p}_{\leq p-1}
        =  \sum_{\substack{j_1,\cdots,j_{n-p+1}\geq 0 \\\sum j_k=p,\sum kj_k=n}}\binom{p}{j_1,\cdots,j_{n-p+1}}\left(\frac{\delta_1}{1!}\right)^{j_1}\cdot\cdots\cdot \left(\frac{\delta_{n-p+1}}{(n-p+1)!}\right)^{j_{n-p+1}}.
    \end{align*}
    If $p\nmid n$, then $j_1,\cdots,j_{n-p+1}<p$ and $p\mid \binom{p}{j_1,\cdots,j_{n-p+1}}$. Therefore, if $p\nmid n$ and $n\geq 2p$, together with $v_p(\lambda)=\frac{1}{p(p-1)}$, we have $v_p\left(\frac{p!}{n!}\stirling{n}{p}_{\leq p-1}\lambda^n\right)\geq 1+\frac{2}{p-1}$.
    As a consequence, we have:
    \begin{equation*}
        \begin{split}
            &  \sum_{n=p}^{p(p-1)}\frac{p!}{n!}\stirling{n}{p}_{\leq p-1}\lambda^n\\
            =&\sum_{k=1}^{p-1}\frac{p!}{(kp)!}\stirling{kp}{p}_{\leq p-1}\lambda^{kp}+\sum_{n=1}^{p-1}\frac{p!}{(n+p)!}\stirling{n+p}{p}_{\leq p-1}\lambda^{n+p}+\newerr.
        \end{split}
    \end{equation*}
    By \cite[Lemma 3.17]{WangYuan2021}, one has
    $$\frac{p!}{(kp)!}\stirling{kp}{p}_{\leq p-1}=\begin{cases}1,&\text{ if } k=1;\\\frac{1}{k!}\stirling{k}{1}_{\leq p-1}+O(p)=\frac{1}{k!}+O(p),&\text{ if } 2\leq k\leq p-1.\end{cases}$$
    Then we have
    $\sum_{k=1}^{p-1}\frac{p!}{(kp)!}\stirling{kp}{p}_{\leq p-1}\lambda^{kp}=\sum_{k=1}^{p-1}\frac{\lambda^{kp}}{k!}+O\left(p^{1+\frac{2}{p-1}}\right)$, and the result follows.
\end{proof}

\begin{lemma}
    Let $p\geq 3$ be a prime. Let $\alpha\in\calO_{\bbL_p}$ and $U_p=1+\frac{1}{(p-1)!}$. We have:
    \begin{lemenum}
        \item \label{it:a} $\sum_{s=1}^{p-1}\binom{p}{s}\sum_{t=s}^{p-1} \left(\frac{s!}{t!}\stirling{t}{s}_{\leq p-1}\right)\alpha^t=p\alpha+O\left(p^{2+2v_p(\alpha)}\right).$
        \item \label{it:b}$\frac{1}{p!}\sum_{m=1}^{p-1}\binom{p}{m}m!\stirling{p}{m}_{\leq p-1}=-U_p+O\left(p^{p-1}\right).$
    \end{lemenum}
\end{lemma}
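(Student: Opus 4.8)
The plan is to reduce both parts to one mechanism: interchanging the order of the double sum and then evaluating the falling-factorial expansion $x^n=\sum_{m=0}^n\stirling{n}{m}(x)_m$ at $x=p$. Since $\binom{p}{m}m!=p(p-1)\cdots(p-m+1)$ is the falling factorial $(p)_m$, that expansion at $x=p$ reads $\sum_{m=0}^n\binom{p}{m}m!\stirling{n}{m}=p^n$; moreover, over every index range that will occur the subscript $\leq p-1$ on the Stirling numbers is inactive (because $n-k+1\leq p-1$ there), so the restricted Stirling numbers may be replaced by ordinary ones.

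For part (1), I would first swap the two summations to write the left-hand side as $\sum_{t=1}^{p-1}\frac{\alpha^t}{t!}\sum_{s=1}^{t}\binom{p}{s}s!\stirling{t}{s}_{\leq p-1}$. For $1\leq s\leq t\leq p-1$ one has $t-s+1\leq p-1$, so $\stirling{t}{s}_{\leq p-1}=\stirling{t}{s}$; together with $\stirling{t}{0}=0$ for $t\geq1$ and the identity above, the inner sum collapses to $p^t$. Hence the left-hand side equals $\sum_{t=1}^{p-1}\frac{(p\alpha)^t}{t!}$, whose $t=1$ term is exactly $p\alpha$. Since $\alpha\in\calO_{\bbL_p}$ gives $v_p(\alpha)\geq0$ and $v_p(t!)=0$ for $2\leq t\leq p-1$, the $t$-th term with $t\geq2$ has valuation $t(1+v_p(\alpha))=2+2v_p(\alpha)+(t-2)(1+v_p(\alpha))\geq2+2v_p(\alpha)$, so the tail is $O\left(p^{2+2v_p(\alpha)}\right)$, which is the claim.

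For part (2), rewriting $\binom{p}{m}m!=\frac{p!}{(p-m)!}$ turns the left-hand side into $\sum_{m=1}^{p-1}\frac{\stirling{p}{m}_{\leq p-1}}{(p-m)!}$. The $m=1$ summand vanishes because $\stirling{p}{1}_{\leq p-1}=0$ (indeed $\stirling{n}{k}_{\leq r}=0$ whenever $n\geq rk+1$, and here $p\geq(p-1)\cdot1+1$; equivalently $B_{p,1}(1,\dots,1,0)=0$), and for $2\leq m\leq p-1$ the restriction is inactive, so the left-hand side equals $\sum_{m=2}^{p-1}\frac{\stirling{p}{m}}{(p-m)!}$. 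I would then restore the boundary terms $m=0$ (which is $0$ since $\stirling{p}{0}=0$), $m=1$ (which is $\frac{1}{(p-1)!}$) and $m=p$ (which is $1$), and apply the expansion $x^p=\sum_{m=0}^p\stirling{p}{m}(x)_m$ at $x=p$:
$$\sum_{m=0}^{p}\frac{\stirling{p}{m}}{(p-m)!}=\frac{1}{p!}\sum_{m=0}^{p}\binom{p}{m}m!\stirling{p}{m}=\frac{p^p}{p!}.$$
Consequently the left-hand side equals $\frac{p^p}{p!}-\left(1+\frac{1}{(p-1)!}\right)=-U_p+\frac{p^p}{p!}$, and since $v_p\!\left(\frac{p^p}{p!}\right)=p-v_p(p!)=p-1$, the correction $\frac{p^p}{p!}$ is $O\left(p^{p-1}\right)$, as asserted.

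I do not expect a genuine obstacle: once the inner sum is recognized as a value of the falling-factorial expansion, everything else is bookkeeping, and the governing identities $\sum_m(p)_m\stirling{n}{m}=p^n$ are exact rather than congruential (so no Wilson-type input is needed). The only points demanding a little care are checking that the $\leq p-1$ restriction is inactive over the summation ranges, pinning down the single boundary value $\stirling{p}{1}_{\leq p-1}=0$ in part (2), and the elementary valuation estimates $v_p(\alpha)\geq0$, $v_p(t!)=0$ for $t\leq p-1$, and $v_p(p^p/p!)=p-1$.
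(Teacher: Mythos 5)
Your proposal is correct and follows essentially the same route as the paper: swap the double sum, observe that the $\leq p-1$ restriction is inactive on the relevant ranges, evaluate the inner sum via $\sum_m (p)_m\stirling{n}{m}=p^n$, and finish with the elementary valuation estimates. The only cosmetic difference is that you make explicit the vanishing of $\stirling{p}{1}_{\leq p-1}$ in part (2), which the paper uses silently; this is a welcome clarification but not a different argument.
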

\begin{proof}
    \begin{enumerate}
        \item Since $t-s+1\leq p-1$, one has $\stirling{t}{s}_{\leq p-1}=\stirling{t}{s}$. Therefore by exchanging the order of the summations, we obtain
              \begin{align*}
                  \sum_{s=1}^{p-1}\binom{p}{s}\sum_{t=s}^{p-1} \left(\frac{s!}{t!}\stirling{t}{s}_{\leq p-1}\right)\alpha^t= \sum_{t=1}^{p-1}\frac{\alpha^t}{t!}\sum_{s=1}^t\binom{p}{s}s!\stirling{t}{s}
                  =                                                                                                         \sum_{t=1}^{p-1}\frac{\alpha^t}{t!}\sum_{s=1}^t (p)_s\stirling{t}{s}.
              \end{align*}
              By \cite[(3.7)]{WangYuan2021}, we have
              $$\sum_{s=1}^t (p)_s\stirling{t}{s}=\sum_{s=0}^t (p)_s\stirling{t}{s}-(p)_0\stirling{t}{0}=p^t.$$
              Therefore, we get:
              $$\sum_{s=1}^{p-1}\binom{p}{s}\sum_{t=s}^{p-1} \left(\frac{s!}{t!}\stirling{t}{s}_{\leq p-1}\right)\alpha^t=\sum_{t=1}^{p-1}\frac{\alpha^t}{t!}p^t=p\alpha+O\left(p^{2+2v_p(\alpha)}\right).$$
        \item Notice that $\stirling{p}{m}_{\leq p-1}=\stirling{p}{m}$ for $m\geq 2$, one can write
              \begin{align*}
                  \frac{1}{p!}\sum_{m=1}^{p-1}\binom{p}{m}m!\stirling{p}{m}_{\leq p-1}
                  =                                                                  \frac{1}{p!}\left(\sum_{m=0}^p (p)_m\stirling{p}{m}-p-p!\right).
              \end{align*}
              Since $\sum_{m=0}^p (p)_m\stirling{p}{m}=p^p$, we have
              $$\frac{1}{p!}\sum_{m=1}^{p-1}\binom{p}{m}m!\stirling{p}{m}_{\leq p-1}=\frac{1}{p!}\left(p^p-p-p!\right)=-U_p+O\left(p^{p-1}\right).$$
    \end{enumerate}
    \phantom{}\qedhere
\end{proof}
\begin{lemma}\label{termj}
    The $\left(1+\frac{2}{p-1}\right)$-truncated  expansion of $\sum_{j=1}^{p-1}\binom{p}{j}\left(\sum_{l=1}^{p-1}\frac{\lambda^l}{l!}\right)^j$ is the following:
    \begin{equation*}
        p\lambda+\zetax p^{\frac{1}{p-1}}\cdot U_p+ \sum_{n=1}^{p-1}\frac{\lambda^{n+p}}{(n+p)!}\left(\sum_{s=1}^{p-1}\binom{p}{s}s!\stirling{n+p}{s}_{\leq p-1}\right)+\newerr,
    \end{equation*}
    where $U_p=1+\frac{1}{(p-1)!}$.
\end{lemma}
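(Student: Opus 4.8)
The plan is to expand each power through the generating function for the $(p-1)$-restricted Stirling numbers of the second kind,
\[\sum_{j=1}^{p-1}\binom{p}{j}\left(\sum_{l=1}^{p-1}\frac{\lambda^l}{l!}\right)^j=\sum_{j=1}^{p-1}\binom{p}{j}\sum_{n=j}^{(p-1)j}\frac{j!}{n!}\stirling{n}{j}_{\leq p-1}\lambda^n,\]
and then to decide which monomials $\lambda^n$ survive modulo $p^{1+\frac{2}{p-1}}$. Since $v_p(\lambda)=\frac{1}{p(p-1)}$, $v_p\left(\binom{p}{j}\right)=1$ for $1\leq j\leq p-1$, and $v_p\left(\frac{j!}{n!}\stirling{n}{j}_{\leq p-1}\right)\geq 0$ by \Cref{it:4}, every term with $n\geq 2p$ has valuation at least $1+\frac{2p}{p(p-1)}=1+\frac{2}{p-1}$, hence lies in $\newerr$. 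After exchanging the order of summation I would thus reduce to $\sum_{n=1}^{2p-1}\lambda^n\sum_{j=1}^{p-1}\binom{p}{j}\frac{j!}{n!}\stirling{n}{j}_{\leq p-1}$ and split the range of $n$ into the three blocks $n\leq p-1$, $n=p$, and $p+1\leq n\leq 2p-1$.

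For the block $n\leq p-1$, swapping the summations back shows that the partial sum is exactly $\sum_{s=1}^{p-1}\binom{p}{s}\sum_{t=s}^{p-1}\left(\frac{s!}{t!}\stirling{t}{s}_{\leq p-1}\right)\lambda^t$, so \Cref{it:a} applied with $\alpha=\lambda$ rewrites it as $p\lambda+O\left(p^{2+\frac{2}{p(p-1)}}\right)$, and since $2+\frac{2}{p(p-1)}\geq 1+\frac{2}{p-1}$ for $p\geq 3$ this block contributes $p\lambda$ up to the allowed error. For the block $n=p$, the coefficient of $\lambda^p$ is $\frac{1}{p!}\sum_{m=1}^{p-1}\binom{p}{m}m!\stirling{p}{m}_{\leq p-1}$, which \Cref{it:b} evaluates as $-U_p+O(p^{p-1})$; as $\zeta_{2(p-1)}$ is a primitive $2(p-1)$-th root of unity we have $\zeta_{2(p-1)}^{p-1}=-1$, whence $\lambda^p=\zeta_{2(p-1)}^{p}p^{\frac{1}{p-1}}=-\zeta_{2(p-1)}p^{\frac{1}{p-1}}$ and this block equals $\zeta_{2(p-1)}p^{\frac{1}{p-1}}U_p$ modulo $O\left(p^{p-1+\frac{1}{p-1}}\right)\subseteq\newerr$. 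For the block $p+1\leq n\leq 2p-1$, I would reindex $n=m+p$ with $1\leq m\leq p-1$; because $\stirling{m+p}{s}_{\leq p-1}=0$ whenever $s(p-1)<m+p$, the sum over the admissible $j$ can be written as $\sum_{s=1}^{p-1}\binom{p}{s}s!\stirling{m+p}{s}_{\leq p-1}$, so this block is literally the third term of the claimed expansion. Adding the three contributions gives the statement.

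The valuation bookkeeping and the two exponent comparisons with $1+\frac{2}{p-1}$ are routine, and \Cref{it:a} and \Cref{it:b} do the real work on the first two blocks. I expect the only genuine care to be needed in matching index sets: checking that the $n\leq p-1$ block is \emph{exactly} the left-hand side of \Cref{it:a} so that no spurious error is introduced, that the $s=p$ term never enters (it is excluded by the range $1\leq j\leq p-1$), and that the identically-zero restricted Stirling numbers on out-of-range indices are what let the final sums be written as $\sum_{s=1}^{p-1}$. The other place not to slip is the elementary identity $\zeta_{2(p-1)}^{p}=-\zeta_{2(p-1)}$, which converts $\lambda^p$ into the correct multiple of $p^{1/(p-1)}$.
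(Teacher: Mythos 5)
Your proposal is correct and follows essentially the same route as the paper: expand via the generating function for the $(p-1)$-restricted Stirling numbers, discard all terms $\lambda^n$ with $n\geq 2p$ by the valuation bound from \Cref{it:4}, and then treat the three blocks $n\leq p-1$, $n=p$, and $p+1\leq n\leq 2p-1$ using \Cref{it:a}, \Cref{it:b}, and a reindexing respectively. The details you flag as needing care (the exact match with the left-hand side of \Cref{it:a}, the exclusion of $s=p$, and $\lambda^p=-\zetax p^{\frac{1}{p-1}}$) are handled the same way in the paper's proof.
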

\begin{proof}
    Using the generating function formula (\ref{genfunctform}), we  have
    \begin{align*}
        \sum_{j=1}^{p-1}\binom{p}{j}\left(\sum_{l=1}^{p-1}\frac{\lambda^l}{l!}\right)^j
        =                                                                                \sum_{s=1}^{p-1}\binom{p}{s}\sum_{t=s}^\infty \left(\frac{s!}{t!}\stirling{t}{s}_{\leq p-1}\right)\lambda^t.
    \end{align*}
    Since $v_p(\frac{s!}{t!}\stirling{t}{s}_{\leq p-1})\geq 0$ and $v_p(\lambda)=\frac{1}{p(p-1)}$, we have:
    \begin{align*}
          & \sum_{j=1}^{p-1}\binom{p}{j}\left(\sum_{l=1}^{p-1}\frac{\lambda^l}{l!}\right)^j
        =  \sum_{s=1}^{p-1}\binom{p}{s}\sum_{t=s}^{2p-1} \left(\frac{s!}{t!}\stirling{t}{s}_{\leq p-1}\right)\lambda^t+\newerr                                                                                                               \\
        = & \sum_{s=1}^{p-1}\binom{p}{s}\sum_{t=s}^{p-1} \left(\frac{s!}{t!}\stirling{t}{s}_{\leq p-1}\right)\lambda^t+\sum_{s=1}^{p-1}\binom{p}{s}\sum_{t=p}^{2p-1} \left(\frac{s!}{t!}\stirling{t}{s}_{\leq p-1}\right)\lambda^t +\newerr.
    \end{align*}
    We estimate $ \sum_{s=1}^{p-1}\binom{p}{s}\sum_{t=s}^{p-1} \left(\frac{s!}{t!}\stirling{t}{s}_{\leq p-1}\right)\lambda^t$ and $\sum_{s=1}^{p-1}\binom{p}{s}\sum_{t=p}^{2p-1} \left(\frac{s!}{t!}\stirling{t}{s}_{\leq p-1}\right)\lambda^t$ separately:
    \begin{enumerate}
        \item     By applying \Cref{it:a} to $\alpha=\lambda$, we obtain $$\sum_{s=1}^{p-1}\binom{p}{s}\sum_{t=s}^{p-1} \left(\frac{s!}{t!}\stirling{t}{s}_{\leq p-1}\right)\lambda^t=p\lambda+O\left(p^2\right).$$
        \item By exchanging the order of summations in $\sum_{s=1}^{p-1}\binom{p}{s}\sum_{t=p}^{2p-1} \left(\frac{s!}{t!}\stirling{t}{s}_{\leq p-1}\right)\lambda^t$, we have
              $$\sum_{s=1}^{p-1}\binom{p}{s}\sum_{t=p}^{2p-1} \left(\frac{s!}{t!}\stirling{t}{s}_{\leq p-1}\right)\lambda^t=\sum_{t=p}^{2p-1}\frac{\lambda^t}{t!}\left(\sum_{s=1}^{p-1}\binom{p}{s}s!\stirling{t}{s}_{\leq p-1}\right).$$
              Moreover, for $t=p$, by \Cref{it:b}, we have $$\frac{\lambda^p}{p!}\sum_{s=1}^{p-1}\binom{p}{s}s!\stirling{p}{s}_{\leq p-1}=\zetax p^{\frac{1}{p-1}}U_p+O\left(p^2\right).$$
    \end{enumerate}
    Therefore, by expanding the product, rearranging terms, truncating and shifting the index, we have
    \begin{equation*}
        \begin{split} &\sum_{j=1}^{p-1}\binom{p}{j}\left(\sum_{l=1}^{p-1}\frac{\lambda^l}{l!}\right)^j\\
            =&p\lambda+\zetax p^{\frac{1}{p-1}}\cdot U_p+ \sum_{n=1}^{p-1}\frac{\lambda^{n+p}}{(n+p)!}\left(\sum_{s=1}^{p-1}\binom{p}{s}s!\stirling{n+p}{s}_{\leq p-1}\right)+\newerr .
        \end{split}
    \end{equation*}
    \qedhere
\end{proof}

Let $\tilde{\Lambda}_{p-1}=\sum_{k=0}^{p-1}\frac{\lambda^{kp}}{[k!]}$, $\widehat{\tilde{\Lambda}}_{p-1}=\sum_{k=0}^{p-1}\frac{\lambda^{kp}}{k!}$ and
$\tilde{\Lambda}_{p-1}^+=  \tilde{\Lambda}_{p-1}+\zetax p^{\frac{1}{p-1}}U_p$.

\begin{proposition}\label{lem:43810}
    We have the $\left(1+\frac{2}{p-1}\right)$-truncated expansion of $\Lambda_{p-1}^p-1$:
    \begin{align*}
        \Lambda_{p-1}^p-1= & \widehat{\tilde{\Lambda}}_{p-1}-1+\zetax p^{1+\frac{1}{p(p-1)}}+\zetax p^{\frac{1}{p-1}}\cdot U_p  \\
                           & +\sum_{n=2}^{p-1}\frac{(-1)^{n+1}}{n!n} \zetax^{n+1}p^{1+\frac{1}{p-1}+\frac{n}{p(p-1)}} +\newerr.
    \end{align*}

\end{proposition}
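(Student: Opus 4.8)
The plan is to assemble the expansion directly from \Cref{termp} and \Cref{termj}, and then convert the one surviving Stirling-number sum into the harmonic-type coefficients via \Cref{lem:35904}.

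First I would combine the reductions already in place. Substituting \Cref{termp} for $\bigl(\sum_{k=1}^{p-1}\lambda^k/k!\bigr)^p$ and \Cref{termj} for $\sum_{j=1}^{p-1}\binom{p}{j}\bigl(\sum_{k=1}^{p-1}\lambda^k/k!\bigr)^j$ into the binomial splitting \eqref{eq:64747}, and using $\Lambda_{p-1}^p-1=\widehat{\Lambda}_{p-1}^p-1+O(p^2)$ together with $O(p^2)\subseteq\newerr$ for $p\geq 3$, one obtains
\[
  \Lambda_{p-1}^p-1=\sum_{k=1}^{p-1}\frac{\lambda^{kp}}{k!}+p\lambda+\zetax p^{\frac{1}{p-1}}U_p+\sum_{n=1}^{p-1}\frac{\lambda^{n+p}}{(n+p)!}\left(p!\stirling{n+p}{p}_{\leq p-1}+\sum_{s=1}^{p-1}\binom{p}{s}s!\stirling{n+p}{s}_{\leq p-1}\right)+\newerr.
\]
Since $\binom{p}{p}p!=p!$, the coefficient of $\lambda^{n+p}/(n+p)!$ is precisely $\sum_{s=1}^{p}\binom{p}{s}s!\stirling{n+p}{s}_{\leq p-1}$; moreover $\sum_{k=1}^{p-1}\lambda^{kp}/k!=\widehat{\tilde{\Lambda}}_{p-1}-1$ and $p\lambda=\zetax p^{1+\frac{1}{p(p-1)}}$ because $v_p(\lambda)=\frac{1}{p(p-1)}$. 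This already yields the first three displayed terms of the claim, so the problem reduces to analyzing the tail $\sum_{n=1}^{p-1}\frac{\lambda^{n+p}}{(n+p)!}\sum_{s=1}^{p}\binom{p}{s}s!\stirling{n+p}{s}_{\leq p-1}$.

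For the tail I would use $\binom{p}{s}s!=p\,\frac{(p-1)!}{(p-s)!}$ to rewrite, for each $1\leq n\leq p-1$,
\[
  \frac{1}{(n+p)!}\sum_{s=1}^{p}\binom{p}{s}s!\stirling{n+p}{s}_{\leq p-1}=p\sum_{s=1}^{p}\frac{(p-1)!}{(p-s)!(n+p)!}\stirling{n+p}{s}_{\leq p-1}.
\]
Each summand on the right is a $p$-adic integer: $\frac{(p-1)!}{(p-s)!}\in\bbZ$, and $v_p\bigl(\frac{1}{(n+p)!}\stirling{n+p}{s}_{\leq p-1}\bigr)\geq 1-v_p((n+p)!)=0$ by \Cref{lem:36099}. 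Hence \Cref{lem:35904} applies and identifies the inner sum with $\frac{(-1)^n}{n!\,n}$ modulo $p$ for $2\leq n\leq p-1$ and with $0$ modulo $p$ for $n=1$, so the coefficient above equals $\frac{(-1)^n p}{n!\,n}+O(p^2)$ for $2\leq n\leq p-1$ and $O(p^2)$ for $n=1$. Plugging in $\lambda^{n+p}=\zetax^{n+p}p^{\frac{n+p}{p(p-1)}}=-\zetax^{n+1}p^{\frac{1}{p-1}+\frac{n}{p(p-1)}}$ (using $\zetax^{p-1}=-1$), the $n$-th tail term becomes $\frac{(-1)^{n+1}}{n!\,n}\zetax^{n+1}p^{1+\frac{1}{p-1}+\frac{n}{p(p-1)}}$ up to an error of valuation $\geq 2+\frac{1}{p-1}+\frac{n}{p(p-1)}$, while the $n=1$ term has valuation $\geq 2+\frac{1}{p-1}+\frac{1}{p(p-1)}$. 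Since $\frac{1}{p-1}\leq\frac12<1$, both bounds exceed $1+\frac{2}{p-1}$, so the $n=1$ term is swallowed by $\newerr$ and the $n=2,\dots,p-1$ terms add up to exactly $\sum_{n=2}^{p-1}\frac{(-1)^{n+1}}{n!\,n}\zetax^{n+1}p^{1+\frac{1}{p-1}+\frac{n}{p(p-1)}}$, which is the remaining summand of the statement.

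The one step that carries genuine arithmetic content is the invocation of \Cref{lem:35904}, i.e. the identification of $\sum_{s=1}^{p}\frac{(p-1)!}{(p-s)!(n+p)!}\stirling{n+p}{s}_{\leq p-1}$ with $\frac{(-1)^n}{n!\,n}\bmod p$ (the same congruence whose binomial aggregation is the $\bmod\ p$ harmonic number identity of \Cref{maintheorm1}); everything else is $p$-adic bookkeeping — merging the two Stirling sums via $\binom{p}{p}p!=p!$, the rewriting $\binom{p}{s}s!=p\,(p-1)!/(p-s)!$, the sign from $\zetax^{p-1}=-1$, and the verification that each discarded contribution has valuation $\geq 1+\frac{2}{p-1}$ and that the $n=1$ tail term degenerates. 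I expect this last valuation bookkeeping — tracking precisely which monomials land below $1+\frac{2}{p-1}$ and which are error — to be the most error-prone part in practice, even though no individual estimate is deep.
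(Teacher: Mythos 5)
Your proposal is correct and follows essentially the same route as the paper: substitute \Cref{termp} and \Cref{termj} into \Cref{eq:64747}, merge the $s=p$ Stirling term into the sum via $\binom{p}{p}p!=p!$, and convert the resulting coefficient $\sum_{s=1}^{p}\frac{(p-1)!}{(p-s)!(n+p)!}\stirling{n+p}{s}_{\leq p-1}$ into $\frac{(-1)^n}{n!\,n}\bmod p$ using \Cref{lem:35904}. The only difference is that you spell out the integrality check (via \Cref{lem:36099}), the sign from $\zetax^{p-1}=-1$, and the valuation bookkeeping that the paper leaves implicit.
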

\begin{proof}
    By combinining \Cref{eq:64747}, \Cref{termp} and \Cref{termj} and rearranging terms,  one has
    \begin{align*}
        \widehat{\Lambda}_{p-1}^p-1= & \widehat{\tilde{\Lambda}}_{p-1}-1+\zetax p^{1+\frac{1}{p(p-1)}}+\zetax p^{\frac{1}{p-1}}\cdot U_p                                \\
                                     & \quad+\sum_{n=1}^{p-1}\frac{\lambda^{n+p}}{(n+p)!}\left(\sum_{s=1}^{p}\binom{p}{s}s!\stirling{n+p}{s}_{\leq p-1}\right)+\newerr.
    \end{align*}
    The result follows from \Cref{lem:35904}.
\end{proof}
\begin{remark} The $\mathrm{mod}\ p$ harmonic identity will not appear directly in the computation, since we have used \Cref{lem:35904} to simplify the formula.
\end{remark}
\begin{lemma}\label{lem:2239}
    One has
    \begin{align*}
        \frac{1}{\Lambda_{p-1}^p}= & \sum_{k=0}^{p-1} \frac{\zetax^k}{k!}p^{\frac{k}{p-1}}-\zetax p^{1+\frac{1}{p(p-1)}}-\zetax p^{\frac{1}{p-1}}U_p \\
                                   & \quad-2\zetax^2 p^{1+\frac{1}{p(p-1)}+\frac{1}{p-1}}+\kappa+\newerr,
    \end{align*}
    where $\kappa= -\sum_{n=2}^{p-1}\frac{(-1)^{n+1}}{n!n}\zetax^{n+1}p^{1+\frac{1}{p-1}+\frac{n}{p(p-1)}}.$
\end{lemma}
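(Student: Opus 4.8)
The plan is to invert the $\left(1+\frac{2}{p-1}\right)$-truncated expansion of $\Lambda_{p-1}^p$ supplied by \Cref{lem:43810}. Write $\mu=\lambda^p=\zetax^p p^{\frac1{p-1}}$; since $\zeta_{2(p-1)}$ is a primitive $2(p-1)$-th root of unity, $\zetax^{p-1}=-1$, so $\mu=-\zetax p^{\frac1{p-1}}$, $v_p(\mu)=\frac1{p-1}$, $\widehat{\tilde{\Lambda}}_{p-1}=\sum_{k=0}^{p-1}\frac{\mu^k}{k!}$, and $\sum_{k=0}^{p-1}\frac{\zetax^k}{k!}p^{\frac k{p-1}}=\sum_{k=0}^{p-1}\frac{(-\mu)^k}{k!}$. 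By \Cref{lem:43810} and the definition of $\kappa$ we may write $\Lambda_{p-1}^p=\widehat{\tilde{\Lambda}}_{p-1}+R$ with
$$R=\zetax p^{1+\frac1{p(p-1)}}+\zetax p^{\frac1{p-1}}U_p-\kappa+\newerr.$$
Here Wilson's congruence $(p-1)!\equiv-1\bmod p$ gives $v_p(U_p)\geq1$, hence $v_p\bigl(\zetax p^{\frac1{p-1}}U_p\bigr)\geq 1+\frac1{p-1}$, while $v_p(\kappa)\geq 1+\frac1{p-1}+\frac2{p(p-1)}$; consequently $v_p(R)=1+\frac1{p(p-1)}$, attained only by the first summand.

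The core of the argument is the congruence $\widehat{\tilde{\Lambda}}_{p-1}^{-1}\equiv\sum_{k=0}^{p-1}\frac{(-\mu)^k}{k!}$ modulo $\newerr$. As both sides are units with constant term $1$, this is equivalent to $\bigl(\sum_{k=0}^{p-1}\frac{\mu^k}{k!}\bigr)\bigl(\sum_{k=0}^{p-1}\frac{(-\mu)^k}{k!}\bigr)=1+\newerr$. The coefficient of $\mu^n$ in the product is $\sum_{k+l=n,\ 0\le k,l\le p-1}\frac{(-1)^l}{k!\,l!}$; for $1\le n\le p-1$ the side conditions are vacuous and it equals $\frac1{n!}(1-1)^n=0$; for $n=p$ it is $\frac1{p!}\sum_{l=1}^{p-1}\binom pl(-1)^l=\frac1{p!}\bigl((1-1)^p-1-(-1)^p\bigr)=0$, using that $p$ is odd; and for $p+1\le n\le 2p-2$ it equals $\frac1{n!}\sum_{l=n-p+1}^{p-1}\binom nl(-1)^l$, in which every binomial $\binom nl$ occurring satisfies $v_p\bigl(\binom nl\bigr)=1$ (because $v_p(n!)=1$ as $p\le n<2p$, while $0<l,\,n-l<p$), so this coefficient is a $p$-adic integer and the corresponding term $\mu^n\cdot(\text{coeff})$ has valuation $\geq\frac n{p-1}\geq\frac{p+1}{p-1}=1+\frac2{p-1}$. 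Hence the product is $1+\newerr$; multiplying by the unit $\widehat{\tilde{\Lambda}}_{p-1}^{-1}$ gives the congruence, and $-\mu=\zetax p^{\frac1{p-1}}$ rewrites its right-hand side as $\sum_{k=0}^{p-1}\frac{\zetax^k}{k!}p^{\frac k{p-1}}$.

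It then remains to expand $\Lambda_{p-1}^{-p}=\widehat{\tilde{\Lambda}}_{p-1}^{-1}\bigl(1+\widehat{\tilde{\Lambda}}_{p-1}^{-1}R\bigr)^{-1}$ as a geometric series. Since $v_p\bigl(\widehat{\tilde{\Lambda}}_{p-1}^{-1}R\bigr)=v_p(R)=1+\frac1{p(p-1)}$, its square has valuation $2+\frac2{p(p-1)}>1+\frac2{p-1}$ for $p\geq3$, so $\Lambda_{p-1}^{-p}=\widehat{\tilde{\Lambda}}_{p-1}^{-1}-\widehat{\tilde{\Lambda}}_{p-1}^{-2}R+\newerr$. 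Writing $\widehat{\tilde{\Lambda}}_{p-1}=1+v$ with $v=\sum_{k=1}^{p-1}\frac{\mu^k}{k!}=\mu+O\!\left(p^{\frac2{p-1}}\right)$, one has $\widehat{\tilde{\Lambda}}_{p-1}^{-2}=1-2\mu+O\!\left(p^{\frac2{p-1}}\right)$, whence $\widehat{\tilde{\Lambda}}_{p-1}^{-2}R=R-2\mu R+\newerr$; and inside $\mu R$ only $\mu\cdot\zetax p^{1+\frac1{p(p-1)}}=-\zetax^2 p^{1+\frac1{p-1}+\frac1{p(p-1)}}$ survives modulo $\newerr$, because $\mu$ times $\zetax p^{\frac1{p-1}}U_p$, times $\kappa$, or times $\newerr$ already has valuation $\geq1+\frac2{p-1}$. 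Collecting terms,
$$\Lambda_{p-1}^{-p}=\widehat{\tilde{\Lambda}}_{p-1}^{-1}-R-2\zetax^2 p^{1+\frac1{p-1}+\frac1{p(p-1)}}+\newerr,$$
and substituting the congruence just established together with $-R=-\zetax p^{1+\frac1{p(p-1)}}-\zetax p^{\frac1{p-1}}U_p+\kappa+\newerr$ yields the asserted formula.

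The only genuine difficulty is the reciprocal congruence for $\widehat{\tilde{\Lambda}}_{p-1}$, and within it the case $n=p$: one must recognize $\widehat{\tilde{\Lambda}}_{p-1}$ as a truncated exponential, whose formal reciprocal is the truncated exponential of $-\mu$, and then control the discrepancy — but the naive valuation bound $v_p(\mu^p)=1+\frac1{p-1}$ falls short of the target $1+\frac2{p-1}$, so one genuinely needs the coefficient of $\mu^p$ to vanish, which is precisely where the oddness of $p$ enters. Everything else is routine valuation bookkeeping built on \Cref{lem:43810} and Wilson's theorem.
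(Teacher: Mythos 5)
Your proof is correct, but its technical core is genuinely different from the paper's. The paper inverts by writing $\frac{1}{\Lambda_{p-1}^p}=\sum_{k=0}^{p}(-1)^k\left(\Lambda_{p-1}^p-1\right)^k+\newerr$, a geometric series in a quantity of valuation only $\frac{1}{p-1}$, so all $p+1$ powers must be tracked; the corrections from \Cref{lem:43810} are harvested from the $k=1$ and $k=2$ terms, and the main term $\sum_{k=0}^p(-1)^k\left(\widehat{\tilde{\Lambda}}_{p-1}-1\right)^k$ is resummed through the generating function of the restricted Stirling numbers together with the identity $\sum_{k=1}^n(-1)^kk!\stirling{n}{k}=(-1)^n+O\left(p^3\right)$, proved via the falling factorials $\left(p^3-1\right)_k$. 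You instead peel off the unit $\widehat{\tilde{\Lambda}}_{p-1}$ multiplicatively, so your geometric series is in $\widehat{\tilde{\Lambda}}_{p-1}^{-1}R$ with $v_p(R)=1+\frac{1}{p(p-1)}>1$ and terminates after two terms, and the combinatorial content is concentrated in the single congruence $\widehat{\tilde{\Lambda}}_{p-1}^{-1}\equiv\sum_{k=0}^{p-1}\frac{(-\mu)^k}{k!}$ modulo $\newerr$, which you establish by a direct Cauchy product: binomial cancellation for $1\leq n\leq p-1$, the oddness of $p$ for the coefficient of $\mu^p$ (which, as you rightly stress, must vanish exactly, since its naive valuation $1+\frac{1}{p-1}$ falls below the target), and the one-carry bound $v_p\left(\binom{n}{l}\right)=1$ for $p<n<2p$. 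The two computations are equivalent in substance --- the paper's Stirling resummation is another way of saying that the reciprocal of a truncated exponential is the truncated exponential of the negative --- but yours is more elementary, needs only a two-term inversion, and makes the decisive cancellation at $\mu^p$ visible, whereas the paper's version fits its systematic use of Stirling-number machinery elsewhere. Your supporting estimates ($v_p(U_p)\geq 1$ by Wilson, $v_p(\kappa)\geq 1+\frac{1}{p-1}+\frac{2}{p(p-1)}$, and the survival of only $\mu\cdot\zetax p^{1+\frac{1}{p(p-1)}}$ inside $-2\mu R$) all check out, and the final assembly reproduces the stated formula including the coefficient $-2$.
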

\begin{proof}

    Since $v_p\left(1-\Lambda_{p-1}^p\right)=\frac{1}{p-1}$, we have
    \begin{align*}
        \frac{1}{\Lambda_{p-1}^p}= & \frac{1}{1-\left(1-\Lambda_{p-1}^p\right)}
        =  \sum_{k=0}^p (-1)^k\left(\Lambda_{p-1}^p-1\right)^k+\newerr                                                                                                               \\
        =                          & \sum_{k=0}^p (-1)^k \left(\widehat{\tilde{\Lambda}}_{p-1}-1+\zetax p^{1+\frac{1}{p(p-1)}}+\zetax p^{\frac{1}{p-1}}U_p-\kappa\right)^k +\newerr.
    \end{align*}
    A direct computation gives
    \begin{align*}
          & \left(\widehat{\tilde{\Lambda}}_{p-1}-1+\zetax p^{1+\frac{1}{p(p-1)}}+\zetax p^{\frac{1}{p-1}}U_p-\kappa\right)^k                                                                                                                                                                              \\
        = & \begin{cases}\left(\widehat{\tilde{\Lambda}}_{p-1}-1\right)^k+\newerr, & \text{ if } k\geq 3; \\ \left(\widehat{\tilde{\Lambda}}_{p-1}-1\right)^2-2\zetax^2 p^{1+\frac{1}{p(p-1)}+\frac{1}{p-1}}+\newerr, & \text{ if } k=2.
            \end{cases}\end{align*}
    Therefore, we have
    \begin{align*}
        \frac{1}{\Lambda_{p-1}^p}= & \sum_{k=0}^p (-1)^k \left(\widehat{\tilde{\Lambda}}_{p-1}-1\right)^k -\zetax p^{1+\frac{1}{p(p-1)}}-\zetax p^{\frac{1}{p-1}}U_p \\
                                   & \quad -2\zetax^2 p^{1+\frac{1}{p(p-1)}+\frac{1}{p-1}} +\kappa+\newerr.
    \end{align*}
    In the following, we expand the term $(*):=\sum_{k=0}^p (-1)^k \left(\widehat{\tilde{\Lambda}}_{p-1}-1\right)^k$, which will allow us to conclude the result.

    Let $\eta= -\zetax p^{\frac{1}{p-1}}$. Using \Cref{it:4}, we obtain
    \begin{align*}
        \sum_{k=0}^p (-1)^k \left(\widehat{\tilde{\Lambda}}_{p-1}-1\right)^k
        =                                                                      1+\sum_{k=1}^p (-1)^k \sum_{n=k}^p\frac{k!}{n!}\stirling{n}{k}_{\leq p-1}\eta^n +\newerr.
    \end{align*}
    By exchanging the order of summations, one has
    \begin{align*}
        (*)= 1+\sum_{n=1}^p \frac{\eta^n}{n!}\sum_{k=1}^n (-1)^k k! \stirling{n}{k}_{\leq p-1} +\newerr.
    \end{align*}
    Together with the fact $\stirling{n}{k}_{\leq p-1}=\stirling{n}{k}$ for $n-k+1\leq p-1$, we have
    \begin{equation*}
        \begin{split}
            (*)= & 1+\sum_{n=1}^p \frac{\eta^n}{n!}\sum_{k=1}^n (-1)^k k! \stirling{n}{k}                               +\frac{\eta^p}{p!}(-1)^1 1! \left(\stirling{p}{1}_{\leq p-1}-\stirling{p}{1}\right) +\newerr \\
            =                                                                     & 1+\frac{\eta^p}{p!}+\sum_{n=1}^p \frac{\eta^n}{n!}\sum_{k=1}^n (-1)^k k! \stirling{n}{k}+\newerr.
        \end{split}
    \end{equation*}
    As
    $\left(p^3-1\right)_k=\left(p^3-1\right)\left(p^3-2\right)\cdots \left(p^3-k\right)=(-1)^k k!+O\left(p^3\right),$
    one has
    \begin{align*}
        \sum_{k=1}^n (-1)^k k! \stirling{n}{k}= & \sum_{k=1}^n\left(\left(p^3-1\right)_k+O\left(p^3\right)\right) \stirling{n}{k}
        =                                       \sum_{k=1}^n\left(p^3-1\right)_k \stirling{n}{k}+O\left(p^3\right)                \\
        =                                       & \left(p^3-1\right)^n+O\left(p^3\right)
        =                                        (-1)^n+O\left(p^3\right),
    \end{align*}
    where the second to last identity is \cite[Page 207 Theorem B]{Comtet1974}.
    Therefore, we have
    $$(*)=1+\frac{\eta^p}{p!}+\sum_{n=1}^p\frac{(-\eta)^n}{n!}+\newerr.$$
    As a consequence, we have
    \begin{align*}
        \frac{1}{\Lambda_{p-1}^p}= & -\zetax p^{1+\frac{1}{p(p-1)}}-2\zetax^2 p^{1+\frac{1}{p(p-1)}+\frac{1}{p-1}}-\zetax p^{\frac{1}{p-1}}U_p+\kappa \\
                                   & \quad +1+\frac{\eta^p}{p!}+\sum_{n=1}^p \frac{(-\eta)^n}{n!}+\newerr                                             \\
        =                          & \sum_{k=0}^{p-1} \frac{\zetax^k}{k!}p^{\frac{k}{p-1}}-\zetax p^{1+\frac{1}{p(p-1)}}-\zetax p^{\frac{1}{p-1}}U_p  \\
                                   & \quad-2\zetax^2 p^{1+\frac{1}{p(p-1)}+\frac{1}{p-1}}+\kappa+\newerr.
    \end{align*}
\end{proof}

\subsubsection{Perturbed polynomial and its Newton polygon}

The first step of our strategy is to use the transfinite Newton algorithm to get the first few terms (finite or infinite) of the canonical expansion of a root of $P(T)$. In the case $P(T)=\Phi_{p^2}(T)$, the first $\aleph_0$ terms of a root $\zeta_{p^2}$ of $P(T)$ is given in \cite[Theorem 3.3]{WangYuan2021}. More precisely, the first $\aleph_0$-terms of $\zeta_{p^2}$ or equivalently the $\left(\frac{1}{p-1}\right)$-truncated canonical expansion of $\zeta_{p^2}$ is the following:
\[\zeta_{p^2}=\Lambda_{p-1}+\zeta_{2(p-1)}p^{\frac{1}{p-1}}\sigma_2 + O\left(p^{\frac{1}{p-1}}\right).\]

The second step is to disturb $P(T)$. Suppose $\mu_0\in\bbL_p$ is an approximation of a root of $P(T)$ obtained in the first step of our strategy. We set $r_0=\sup\mathrm{Supp}(\mu_0)$ and choose a rational number $r>r_0$. We choose randomly an element $\alpha\in \bbL_p$ such that $r_0<v_p(\alpha)$ and $\sup\mathrm{Supp}(\alpha)=r$. Then we disturb $P(T)$ by $\mu=\mu_0+\alpha$ and estimate the maximal slope $s_{\mu}$ of the perturbed polynomial $P_{\mu}(T)$. Since $P_{\mu}(T)=\sum_{k=0}^nb_kT^{n-k}$ with $b_k=\sum_{j=0}^{k}a_{k-j}\binom{n-k+j}{j}\mu^j$ for $0\leq k\leq n$, we can reduce the estimation of the maximal slope to calculate the $p$-adic valuation of $b_k$ for $0\leq k\leq n$. There are two different cases depending on the value of $s_\mu$:
\begin{enumerate}
    \item if $s_{\mu}\geq r$, then $\mu$ is a $r$-truncated expansion of a root of $P(T)$;
    \item if $s_{\mu}<r_0$, then we add an indeterminate $\calM$ to $\mu$. We need to find the value of $\calM$ such that $\mu+\calM$ gives the $r$-truncated expansion of a root of $P(T)$.
          Since $\mu_0$ is an approximation, we have $v_p(\calM)\geq r_0$ and $\mu+\calM= \mu_0+O(p^r)$. This produces a truncated equation by considering $(\mu+\calM)^{p^k}=(\mu_0+O(p^r))^{p^k}$ for some $k\geq 1$. In general, this may not produce a useful equation for arbitrary $P(T)$. But for $\Phi_{p^2}(T)$, this do produce a useful equation.
\end{enumerate}

In the rest of this paragraph, we establish two truncated expansions of a root $\zeta_{p^2}$ of $P(T)=\Phi_{p^2}(T)$, which exactly correspond to the two situations described as above. For any $\mu\in \bbL_p$, we have the following formula for the coefficients of $\mu$-perturbed polynomial
$P_{\mu}(T)=\Phi_{p^2}(T+\mu)\eqqcolon \sum_{k=0}^{p(p-1)}b_{p(p-1)-k}T^k$:
\begin{equation}\label{coefficient}
    \begin{split}
        b_{p(p-1)}&=\sum_{l=0}^{p-1}\mu^{lp}=\frac{\mu^{p^2}-1}{\mu^p-1}; \\
        b_{p(p-1)-1}&=\sum_{l=1}^{p-1}\binom{pl}{1}\mu^{pl-1}=\frac{p}{\mu}\left(\frac{p\mu^{p^2}}{\mu^p-1}-\mu^p\frac{\mu^{p^2}-1}{\left(\mu^p-1\right)^2}\right).
    \end{split}\end{equation}

\begin{enumerate}[leftmargin=\itemindent]
    \item Take $\mu_0=\Lambda_{p-1}+\zeta_{2(p-1)}p^{\frac{1}{p-1}}\sigma_2$, $r_0=\frac{1}{p-1}$, $r=\frac{1}{p-1}+\frac{1}{p(p-1)}$ and
          \[\mu=\Lambda_{p-1}+\left(1+\zetax p^{\frac{1}{p(p-1)}}\right)\zetax p^{\frac{1}{p-1}}\sigma_2.\]
          With the expansion of $\mu^p-1$ and $\mu^{p^2}-1$ (cf. \Cref{lem:23648} and \Cref{eq:31503}), a direct calculation using the strong triangle inequality gives:
          \begin{align*}
              v_p\left(b_{p(p-1)-1}\right) & =v_p\left(\frac{p}{\mu}\left(\frac{p\mu^{p^2}}{\mu^p-1}-\mu^p\frac{\mu^{p^2}-1}{\left(\mu^p-1\right)^2}\right)\right)=2-\frac{1}{p-1}; \\
              v_p\left(b_{p(p-1)}\right)   & =v_p\left(\frac{\mu^{p^2}-1}{\mu^p-1}\right)=2+\frac{1}{p(p-1)}.
          \end{align*}
          Notice that the line passing through the points $\left(p(p-1)-1,v_p\left(b_{p(p-1)-1}\right)\right)$ and $\left(p(p-1),v_p\left(b_{p(p-1)}\right)\right)$ has slope equal to or smaller than the maximal slope of $\Newt{P_\mu}$.
          As a consequence, the maximal slope of $\Newt{P_\mu}$ is
          \[\geq \left(2+\frac{1}{p(p-1)}\right)-\left(2-\frac{1}{p-1}\right)=\frac{1}{p(p-1)}+\frac{1}{p-1},\] which implies that there exists a root of $P_\mu$ with valuation $\geq \frac{1}{p(p-1)}+\frac{1}{p-1}$. Therefore we conclude:
          \begin{proposition}\label{prop:47112}
              There exists a $p^2$-th primitive root of unity $\zeta_{p^2}$ satisfying
              \begin{equation}
                  \zeta_{p^2}=\mu+O\left(p^{\frac{1}{p(p-1)}+\frac{1}{p-1}}\right).
              \end{equation}
          \end{proposition}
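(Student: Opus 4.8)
The plan is to \emph{verify} that the explicitly chosen $\mu$ is a $\left(\frac{1}{p-1}+\frac{1}{p(p-1)}\right)$-truncated expansion of a root of $\Phi_{p^2}(T)$ by locating the last constituent segment of the Newton polygon of the perturbed polynomial $P_\mu(T)=\Phi_{p^2}(T+\mu)$. Since the roots of $P_\mu$ are precisely the differences $\zeta-\mu$ with $\zeta$ ranging over the primitive $p^2$-th roots of unity, it is enough to exhibit a single root of $P_\mu$ of valuation $\geq r:=\frac{1}{p-1}+\frac{1}{p(p-1)}$: for the corresponding $\zeta_{p^2}$ one then has $\zeta_{p^2}-\mu\in O(p^r)$, which is the assertion. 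Recall from \cite[Theorem 3.3]{WangYuan2021} that $\mu_0=\Lambda_{p-1}+\zetax p^{\frac{1}{p-1}}\sigma_2$ is the $\frac{1}{p-1}$-truncated canonical expansion of $\zeta_{p^2}$ and that $\mu=\mu_0+\zetax^2 p^{\frac{1}{p-1}+\frac{1}{p(p-1)}}\sigma_2$, so $\mu$ is an \emph{approximation} in the sense of the transfinite Newton algorithm and only its next correction level needs to be detected.

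The computational input is the pair of expansions of $\mu^p-1$ and $\mu^{p^2}-1$ to additive precision $\newerr$, namely \Cref{lem:23648} and \Cref{eq:31503}; these are deduced from the $\left(1+\frac{2}{p-1}\right)$-truncated expansion of $\Lambda_{p-1}^p-1$ in \Cref{lem:43810} by expanding $\mu^p=\left(\Lambda_{p-1}+(\mu-\Lambda_{p-1})\right)^p$ with the binomial theorem, keeping every cross-term that survives below the threshold, and then iterating once more. The one fact from them that really matters is $v_p\left(\mu^p-1\right)=\frac{1}{p-1}$ (its leading term comes from $(\mu-1)^p$), which controls every denominator below. Feeding these into the closed forms from \Cref{coefficient},
\[b_{p(p-1)}=\frac{\mu^{p^2}-1}{\mu^p-1},\qquad b_{p(p-1)-1}=\frac{p}{\mu}\left(\frac{p\mu^{p^2}}{\mu^p-1}-\mu^p\frac{\mu^{p^2}-1}{\left(\mu^p-1\right)^2}\right),\]
and applying the strong triangle inequality, I expect to obtain $v_p\left(b_{p(p-1)}\right)=2+\frac{1}{p(p-1)}$ and $v_p\left(b_{p(p-1)-1}\right)=2-\frac{1}{p-1}$.

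It then remains to run the Newton-polygon argument. The constant term of $P_\mu$ is $b_{p(p-1)}=\Phi_{p^2}(\mu)$, which is nonzero since its valuation is finite, so $\left(p(p-1),v_p\left(b_{p(p-1)}\right)\right)$ is the right endpoint of $\Newt{P_\mu}$. The point $\left(p(p-1)-1,v_p\left(b_{p(p-1)-1}\right)\right)$ lies on or above $\Newt{P_\mu}$, hence the last constituent segment has slope at least
\[\left(2+\frac{1}{p(p-1)}\right)-\left(2-\frac{1}{p-1}\right)=\frac{1}{p-1}+\frac{1}{p(p-1)}=r.\]
By the dictionary between slopes of the Newton polygon and valuations of roots, $P_\mu$ has a root of valuation $\geq r$; adding $\mu$ to it produces a primitive $p^2$-th root of unity $\zeta_{p^2}$ with $\zeta_{p^2}=\mu+O\left(p^{r}\right)$.

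The main obstacle is the valuation bookkeeping behind the second paragraph rather than the Newton-polygon formalism. Because $v_p(\mu^p-1)=\frac{1}{p-1}$, the two outer terms in $\mu^{p^2}-1=p\left(\mu^p-1\right)+\binom{p}{2}\left(\mu^p-1\right)^2+\cdots+\left(\mu^p-1\right)^p$ share the valuation $\frac{p}{p-1}$ and their leading terms cancel, so the value $2+\frac{1}{p(p-1)}$ for $v_p\left(b_{p(p-1)}\right)$ only surfaces after this cancellation is executed carefully; similarly one must check that within $b_{p(p-1)-1}$ the term $\frac{p\mu^{p^2}}{\mu^p-1}$ strictly dominates $\mu^p\frac{\mu^{p^2}-1}{(\mu^p-1)^2}$. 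It is precisely the requirement that these valuations come out exactly as stated, with no residual cancellation in the surviving leading terms, that forces the correction coefficient in $\mu$ to be $\zetax^2$ rather than leaving it as a free parameter.
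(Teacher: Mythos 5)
Your proposal follows the paper's own argument essentially verbatim: you perturb $\Phi_{p^2}(T)$ by the same $\mu$, feed \Cref{lem:23648} and \Cref{eq:31503} into the closed forms \Cref{coefficient} to get $v_p\left(b_{p(p-1)}\right)=2+\frac{1}{p(p-1)}$ and $v_p\left(b_{p(p-1)-1}\right)=2-\frac{1}{p-1}$, and read off from the last segment of $\Newt{P_\mu}$ a root of valuation at least $\frac{1}{p-1}+\frac{1}{p(p-1)}$. The valuations and the cancellation structure you flag (in particular $v_p(\mu^p-1)=\frac{1}{p-1}$ and the dominance of $\frac{p\mu^{p^2}}{\mu^p-1}$ inside $b_{p(p-1)-1}$) all check out against the paper's computation, so the argument is correct and coincides with the paper's.
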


    \item Recall that $\sigma_2=\sum_{k=2}^{\infty}p^{-\frac{1}{p^k}}$. We take
          \[\mu_0=\Lambda_{p-1}+\left(1+\lambda\right)\zetax p^{\frac{1}{p-1}}\sigma_2,r_0=\frac{1}{p-1}+\frac{1}{p(p-1)}, r=\frac{2}{p-1}\] and
          $\mu=\Lambda_{p-1}\left(1+\zetax p^{\frac{1}{p-1}}\sigma_2\right)$.
          Suppose $\Lambda=\mu+\calM$ is a $r$-truncated expansion of $\zeta_{p^2}$ with an indeterminate $\calM$. Then
          we have
          \[\Lambda=\mu+\calM=\mu_0+O\left(p^{r_0}\right).\]
          Consider the truncated expansions of $\Lambda^p$.
          \begin{enumerate}
              \item Using $\Lambda=\mu_0+O\left(p^{r_0}\right)$, we get
                    \[\Lambda^p=   \mu_0^p+\sum_{k=1}^{p-1}\binom{p}{k}\mu_0^{p-k}O\left(p^{r_0}\right)^k+O\left(p^{r_0}\right)^p.\]
                    Note that $r_0\cdot p=1+\frac{2}{p-1}$ and $v_p\left(\sum_{k=1}^{p-1}\binom{p}{k}\mu_0^{p-k}O\left(p^{r_0}\right)^k\right)\geq 1+\frac{1}{p-1}+\frac{1}{p(p-1)}$.
                    As $\Lambda$ is an approximation of $\zeta_{p^2}$, the majoration of the maximal slope of the Newton polygon of $P_{\Lambda}$ will improve the truncated position of $\Lambda^p$ (cf.\Cref{lem:26652}):
                    \begin{equation}\label{cote1}\Lambda^p=\tilde{\Lambda}_{p-1}^++O\left(p^{1+r}\right),\end{equation}
                    where $ \tilde{\Lambda}_{p-1}^+= \sum_{l=0}^{p-1}\frac{(-1)^l}{[l!]}\zetax^l p^{\frac{l}{p-1}}+\zetax p^{\frac{1}{p-1}}U_p$.
              \item Using $\Lambda=\mu+\calM$, $v_p(\calM)\geq r_0$ and $v_p(\calM^k)\geq \frac{2}{p-1}$ for $k\geq 2$, we have
                    \[\Lambda^p= \sum_{k=0}^p\binom{p}{k}\mu^{p-k}\calM^k=\mu^p+p\mu^{p-1}\calM+O\left(p^{1+r}\right).\]
                    Thus, together with \Cref{cote1}, we have
                    $\mu^p+p\mu^{p-1}\calM=\tilde{\Lambda}_{p-1}^++O\left(p^{1+r}\right)$.
                    In other words, we have an equation  $p\calM=\mu\left(\frac{\tilde{\Lambda}_{p-1}^+}{\mu^p}-1\right)+O\left(p^{1+r}\right)$.
                    We calculate the $(1+r)$-truncated expansion of $\mu\left(\frac{\tilde{\Lambda}_{p-1}^+}{\mu^p}-1\right)$ in \Cref{truncated3} and obtain the value of $\calM$
                    (cf. \Cref{lem:38120}):
                    One has
                    \begin{align*}
                        \calM= & \frac{1}{2}\zetax^2 p^{\frac{2}{p-1}\sigma_2^2}+\frac{1}{2}\zetax^3p^{\frac{2}{p-1}-\frac{p-2}{p^2(p-1)}}             \\
                               & \quad-\sum_{k=1}^{p-1}\frac{H_k}{k!}\zetax^{k+1} p^{\frac{1}{p-1}+\frac{k}{p(p-1)}} +O\left(p^{\frac{2}{p-1}}\right),
                    \end{align*}
                    where $H_k=\sum_{n=1}^{k}\frac{1}{n}$ is the harmonic number.
          \end{enumerate}

\end{enumerate}

\subsection{Estimations}
Let $P(T)\in \bbL_p[T]$ be a given polynomial. For any $\Lambda\in\bbL_p$, the $\Lambda$-perturbed polynomial $P_{\Lambda}(T)$, in particular the Newton polygon associated to $P_{\Lambda}(T)$,  plays an important role in the transfinite Newton algorithm and its variant. For $P(T)=\Phi_{p^2}(T)$ the $p^2$-th cyclotomic polynomial, the coefficients of its $\Lambda$-perturbed polynomial is determined by $\Lambda^p$ and $\Lambda^{p^2}$ (cf. the formula \Cref{coefficient}).  In this paragraph, we will establish the truncated expansions of $\Lambda^p$ and $\Lambda^{p^2}$ for $$\Lambda=\Lambda_{p-1}+\left(1+\lambda\right)\zetax p^{\frac{1}{p-1}}\sigma_2 \text{ or } \Lambda=\Lambda_{p-1}\left(1+\zeta_{2(p-1)}p^{\frac{1}{p-1}}\sigma_2\right)+\calM.$$

\subsubsection{Two useful propositions}
The first proposition is the following, which will be used in our calculation, in particular, for $A=\sigma_n:=\sum_{k=n}^{\infty}p^{-1/p^k}$.
\begin{proposition}\label{prop:16960}
    Let $A=\sum_{q\in\bbQ}\left[\alpha_q\right]p^q\in\bbL_p$ with $\Supp{A}\subset \left[0,\frac{1}{p}\right)$. Then
    $$A^p=\sum_{q\in \Supp{A}}\left[\alpha_q^p\right]p^{pq}+O\left(p^{1+pv_p(A)}\right).$$
\end{proposition}
\begin{proof}
    For any $q\in\bbQ$, let $\calS_q=\left\{(g_1,\cdots,g_p)\in\Supp{A}^p\middle\vert g_1+\cdots+g_p=q\right\}$ which is a finite set and $T_q=\sum_{(g_1,\cdots,g_p)\in\calS_q}\left[\alpha_{g_1}\cdots\alpha_{g_p}\right]$ the coefficient of degree $q$ term of $A^p$. Note that if $\calS_{q}=\varnothing$, then $T_q=0$.

    For $\underline{\alpha}, \underline{\beta}\in \calS_q$, if $\underline{\alpha}$ equal to $\underline{\beta}$ up to a permutation, then we say $\underline{\alpha}$ is equivalent to $\underline{\beta}$, denoted by $\underline{\alpha}\sim\underline{\beta}$.
    Thus we have
    \[T_q=\sum_{\subalign{&\overline{(q_1, \cdots q_p)}\in S_q/\sim,\\&q_1=\cdots=q_{i_1}\\< &q_{i_1+1}=\cdots=q_{i_2}\\< &\cdots\\< &q_{i_k+1}=\cdots=q_{p}}}\left[\alpha_{q_1}\cdots \alpha_{q_p}\right]\binom{p}{i_1, i_2-i_1,\cdots, p-i_k}.\]
    Note that $v_p\left(\binom{p}{i_1, i_2-i_1, \cdots, p-i_k}\right)<1$ if and only if  $q_1=q_2=\cdots=q_p=\frac{q}{p}$.
    Consequently, one has $T_q= \begin{cases}
            \left[\alpha_{q/p}^p\right]+O(p), & \text{ if }\frac{q}{p}\in \Supp{A}; \\
            O(p),                             & \text{ otherwise}.\end{cases}$

    As a result,
    $$A^p=\sum_{q/p\in \Supp{A}}\left(\left[\alpha_{q/p}^p\right]+O(p)\right)\cdot p^q+\sum_{\calS_q\neq\varnothing \text{ and } q/p\notin \Supp{A}}O(p)\cdot p^q.$$
    Since $\calS_q=\varnothing$ for $q< pv_p(A)$, one obtains
    $$A^p=\sum_{q/p\in \Supp{A}}\left[\alpha_{q/p}^p\right]\cdot p^q+O\left(p^{1+pv_p(A)}\right)=\sum_{q\in \Supp{A}}\left[\alpha_q^p\right]p^{pq}+O\left(p^{1+pv_p(A)}\right).$$
\end{proof}

\begin{corollary}\label{lemma:12551}
    For positive integer number $m,n\geq 1$, we have
    $$\left(\sum_{k=n+m}^\infty p^{-1/p^k}\right)^{p^n}=\sum_{k=m}^\infty p^{-1/p^k}+O\left(p^{1-1/p^m}\right).$$
\end{corollary}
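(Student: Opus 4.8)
The plan is to iterate \Cref{lem:16960} $n$ times. The obstruction to applying it directly is that $\sigma_{n+m}=\sum_{k\ge n+m}p^{-1/p^k}$ has support in $\left(-\frac{1}{p^{n+m}},0\right)$ rather than in $\left[0,\frac1p\right)$, so I would first renormalize: set $A=p^{1/p^{n+m}}\sigma_{n+m}=\sum_{k\ge n+m}p^{\frac{1}{p^{n+m}}-\frac1{p^k}}$, whose support now lies in $\left[0,\frac{1}{p^{n+m}}\right)\subset\left[0,\frac1p\right)$ (as $n+m\ge2$), all of whose Teichmüller coefficients equal $[1]$, and for which $v_p(A)=0$. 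Since $A^{p^n}=p^{1/p^m}\sigma_{n+m}^{p^n}$, proving the corollary amounts to showing $A^{p^n}=p^{1/p^m}\sigma_m+O(p)$.

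I would introduce the ``Frobenius twists'' $M_i=p^{1/p^{n+m-i}}\sigma_{n+m-i}=\sum_{j\ge n+m-i}p^{\frac{1}{p^{n+m-i}}-\frac1{p^j}}$ for $0\le i\le n$, so that $M_0=A$, $M_n=p^{1/p^m}\sigma_m$, each $M_i$ has Teichmüller coefficients $[1]$ and valuation $0$, and $\Supp{M_i}\subset\left[0,\frac1p\right)$ for $i\le n-1$. The heart of the argument is the claim $A^{p^i}=M_i+O(p)$ for $0\le i\le n$, which I would prove by induction on $i$: given $A^{p^{i-1}}=M_{i-1}+O(p)$, raising to the $p$-th power and expanding the binomial, the cross-terms are $\binom pj M_{i-1}^{p-j}\,O(p)^j$ with $1\le j\le p$, each of valuation $\ge2$ because $v_p(M_{i-1})=0$ and $p\mid\binom pj$ for $0<j<p$; hence $A^{p^i}=M_{i-1}^p+O(p^2)$, and \Cref{lem:16960} applied to $M_{i-1}$ (legitimate since $\Supp{M_{i-1}}\subset\left[0,\frac1p\right)$, its coefficients are $[1]$, and $v_p(M_{i-1})=0$) gives $M_{i-1}^p=M_i+O(p)$, the index shift coming from $p\left(\frac{1}{p^{n+m-i+1}}-\frac1{p^j}\right)=\frac1{p^{n+m-i}}-\frac1{p^{j-1}}$.

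Taking $i=n$ then gives $A^{p^n}=p^{1/p^m}\sigma_m+O(p)$; dividing by $p^{1/p^m}$ (equivalently, using $A^{p^n}=p^{1/p^m}\sigma_{n+m}^{p^n}$) yields $\sigma_{n+m}^{p^n}=\sigma_m+O\!\left(p^{1-1/p^m}\right)$, which is the assertion. The only genuinely delicate point is the propagation of the error term through the $n$ successive $p$-th powers; this stays under control precisely because each main term $M_i$ has valuation $0$ (so multiplying by its powers never lowers valuations) together with the divisibility $p\mid\binom pj$ for $0<j<p$, which keeps the accumulated error in $O(p)$, hence a fortiori in $O\!\left(p^{1-1/p^m}\right)$.
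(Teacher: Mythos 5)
Your proof is correct and follows essentially the same route as the paper's: both arguments iterate the $p$-th power map one step at a time, renormalize by a power $p^{1/p^{\bullet}}$ so that \Cref{lem:16960} applies, and control the propagated error through the binomial expansion using $p\mid\binom{p}{j}$ for $0<j<p$. Your bookkeeping via the valuation-zero elements $M_i$ is a slightly cleaner packaging of the paper's induction on $n$, but the underlying mechanism is identical.
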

\begin{proof}
    We prove the statement by induction.
    If $n=1$, by \Cref{prop:16960}, we have
    \begin{align*}
        p^{1/p^m}\left(\sum_{k=m+1}^\infty p^{-1/p^k}\right)^p=  \left(\sum_{k=m+1}^\infty p^{1/p^{m+1}-1/p^k}\right)^p
        =                                                        \sum_{k=m+1}^\infty p^{1/p^m-1/p^{k-1}}+O(p).
    \end{align*}
    Then by inverting $p^{1/p^m}$ on both sides, one obtains the result for $n=1$.
    Suppose the statement is true for $1,2,\cdots,n$. Then
    \begin{align*}
        \left(\sum_{k=n+1+m}^\infty p^{-1/p^k}\right)^{p^{n+1}}= & \left(\sum_{k=m+1}^\infty p^{-1/p^k}+O\left(p^{1-1/p^{m+1}}\right)\right)^p                                                  \\
        =                                                        & \sum_{t=0}^p\binom{p}{t}\left(\sum_{k=m+1}^\infty p^{-1/p^k}\right)^{p-t}\cdot \left(O\left(p^{1-1/p^{m+1}}\right)\right)^t.
    \end{align*}
    By moving terms with valuation greater than $1$ into the error term, we have
    \begin{align*}
        \left(\sum_{k=n+1+m}^\infty p^{-1/p^k}\right)^{p^{n+1}}=  \left(\sum_{k=m+1}^\infty p^{-1/p^k}\right)^p+O\left(p^1\right) =        \sum_{k=m}^\infty p^{-1/p^{k-1}}+O\left(p^{1-1/p^m}\right).
    \end{align*}
    The result follows from the induction.
\end{proof}
In the rest, we will establish the second useful proposition (cf. \Cref{prop:55108}), which allows us to reduce the study of $\Lambda_{p-1}^{p^2}$ to that of $\Lambda_{p-1}^p$.

\begin{lemma}\label{lem:13884}
    $$\widehat{\tilde{\Lambda}}_{p-1}^p-1=-\zetax p^{1+\frac{1}{p-1}}\cdot U_p+O\left(p^{2+\frac{2}{p-1}}\right).$$
\end{lemma}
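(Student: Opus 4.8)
The plan is to expand $\widehat{\tilde{\Lambda}}_{p-1}^p$ by the binomial theorem, reduce each piece — via the generating function for the restricted Stirling numbers of the second kind — to a short combination of powers of $\nu:=\lambda^p$, and then finish with a one–line root–of–unity computation. The organizing observation is that $\widehat{\tilde{\Lambda}}_{p-1}=\sum_{k=0}^{p-1}\frac{\nu^k}{k!}$ is simply $\widehat{\Lambda}_{p-1}$ after the substitution $\lambda\mapsto\lambda^p$, so the apparatus already set up for $\Lambda_{p-1}^p-1$ (\Cref{eq:64747}, \Cref{termp}, \Cref{termj}, \Cref{lem:43810}) carries over. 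Moreover $v_p(\nu)=\frac1{p-1}$ is much larger than $v_p(\lambda)=\frac1{p(p-1)}$, so every ``error'' term that occurred in those lemmas now acquires valuation $\geq 2+\frac2{p-1}$, which makes the bookkeeping considerably lighter here.

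Concretely, writing $w=\sum_{k=1}^{p-1}\frac{\nu^k}{k!}$ (so $v_p(w)=\frac1{p-1}$), I would start from $\widehat{\tilde{\Lambda}}_{p-1}^p-1=w^p+\sum_{j=1}^{p-1}\binom pj w^j$ and treat the two pieces in turn. For $w^p=\sum_{n=p}^{p(p-1)}\frac{p!}{n!}\stirling np_{\leq p-1}\nu^n$, the term $n=p$ contributes $\nu^p$; for $n=kp$ with $2\leq k\leq p-1$ one uses \Cref{it:4} to see the term has valuation $\geq\frac{kp}{p-1}\geq 2+\frac2{p-1}$; for $n=p+i$ with $1\leq i\leq p-1$ one uses $p\mid\stirling{p+i}p_{\leq p-1}$ (\Cref{lem:36099}) together with $v_p((p+i)!)=1$; and for $p\nmid n$, $n\geq 2p$, the multinomial–divisibility argument from the proof of \Cref{termp} again gives valuation $\geq 2+\frac2{p-1}$. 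Hence $w^p=\nu^p+O\bigl(p^{2+\frac2{p-1}}\bigr)$. For $\sum_{j=1}^{p-1}\binom pj w^j=\sum_{j=1}^{p-1}\binom pj\sum_{t\geq j}\frac{j!}{t!}\stirling tj_{\leq p-1}\nu^t$, the factor $\binom pj$ has valuation $1$ and the Stirling coefficients are $p$-integral (\Cref{it:4}), so only the terms with $j\leq t\leq p$ survive modulo $p^{2+\frac2{p-1}}$. The part $t\leq p-1$ is exactly \Cref{it:a} with $\alpha=\nu$ and equals $p\nu+O\bigl(p^{2+\frac2{p-1}}\bigr)$; the part $t=p$ is $\frac{\nu^p}{p!}\sum_{j=1}^{p-1}\binom pj j!\stirling pj_{\leq p-1}$, and since $\stirling p1_{\leq p-1}=0$, \Cref{it:b} turns it into $-U_p\nu^p+O\bigl(p^{2+\frac2{p-1}}\bigr)$ (the leftover $p^{p-1}\nu^p$ has valuation $p+\frac1{p-1}\geq 2+\frac2{p-1}$).

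Assembling, $\widehat{\tilde{\Lambda}}_{p-1}^p-1=\nu^p+p\nu-U_p\nu^p+O\bigl(p^{2+\frac2{p-1}}\bigr)$, and it remains to evaluate $\nu^p+p\nu$ and $\nu^p$ explicitly. Since $\zetax=\zeta_{2(p-1)}$ is a $(2(p-1))$-th root of unity and $p^2\equiv 1\bmod 2(p-1)$, we have $\zetax^{p^2}=\zetax$; moreover $\zetax^{p-1}$ has order $2$, hence equals $-1$. Therefore $\nu^p=\lambda^{p^2}=\zetax^{p^2}p^{\frac p{p-1}}=\zetax\,p^{1+\frac1{p-1}}$ and $\nu^p+p\nu=\zetax^{p^2}p^{\frac p{p-1}}+\zetax^p p^{\frac p{p-1}}=\zetax\bigl(1+\zetax^{p-1}\bigr)p^{\frac p{p-1}}=0$. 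Substituting gives $\widehat{\tilde{\Lambda}}_{p-1}^p-1=-U_p\nu^p+O\bigl(p^{2+\frac2{p-1}}\bigr)=-\zetax\,p^{1+\frac1{p-1}}U_p+O\bigl(p^{2+\frac2{p-1}}\bigr)$, as claimed.

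The bulk of the work is the valuation bookkeeping that discards the error terms; the only genuinely load–bearing (and mildly surprising) point is the exact cancellation $\nu^p+p\nu=0$, which rests on $\zetax^{p-1}=-1$. As a sanity check I would verify the whole identity by hand for $p=3$, where $\widehat{\tilde{\Lambda}}_2^3-1$ can be computed directly, to make sure no stray unit has been lost along the way.
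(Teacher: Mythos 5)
Your proof is correct and follows essentially the same route as the paper's: binomial expansion of the $p$-th power, the generating function for restricted Stirling numbers, \Cref{it:a} for the low-degree part, \Cref{it:b} for the $t=p$ part, and the cancellation of $\nu^p$ against $p\nu$ (which the paper phrases as the $+\zetax p^{1+\frac{1}{p-1}}$ and $-\zetax p^{1+\frac{1}{p-1}}$ terms cancelling, with $\eta=\lambda^p$ playing the role of your $\nu$). The only cosmetic difference is that for the $w^p$ piece you invoke \Cref{lem:36099} and the multinomial divisibility argument where the paper uses \Cref{it:2} and \Cref{it:3}; both yield the same bound.
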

\begin{proof}
    Let \(\eta=-\zetax p^{\frac{1}{p-1}}\). We have
    \begin{equation}\label{eq:51692}
        \left(\sum_{l=0}^{p-1}\frac{\eta^l}{l!}\right)^p-1=\sum_{j=1}^p\binom{p}{j}\left(\sum_{l=1}^{p-1}\frac{\eta^l}{l!}\right)^j=\sum_{j=1}^p\binom{p}{j}\sum_{k=j}^\infty \frac{j!}{k!}\stirling{k}{j}_{\leq p-1}\eta^k.
    \end{equation}
    Notice that
    \[v_p\left(\binom{p}{j}\left(\frac{j!}{k!}\stirling{k}{j}_{\leq p-1}\right)\eta^k\right)\geq 1-v_p(j)+\frac{k}{p-1} ,\]
    by rearranging terms and assembling terms with valuation  \(\geq 2+\frac{2}{p-1}\), we can rewrite \Cref{eq:51692} as
    \begin{align*}
        \left(\sum_{l=0}^{p-1}\frac{\eta^l}{l!}\right)^p-1= & \sum_{j=1}^{p-1}\binom{p}{j}\sum_{k=j}^{p-1} \frac{j!}{k!}\stirling{k}{j}_{\leq p-1}\eta^k+\sum_{j=1}^{p-1}\binom{p}{j}\frac{j!}{p!}\stirling{p}{j}_{\leq p-1}\eta^p \\
                                                            & \quad+\sum_{k=p}^{2p-1}\frac{p!}{k!}\stirling{k}{p}_{\leq p-1}\eta^k+O\left(p^{2+\frac{2}{p-1}}\right).
    \end{align*}
    By \Cref{it:a}, one has
    $$\sum_{j=1}^{p-1}\binom{p}{j}\sum_{k=j}^{p-1} \frac{j!}{k!}\stirling{k}{j}_{\leq p-1}\eta^k=-\zetax p^{1+\frac{1}{p-1}}+O\left(p^{2+\frac{2}{p-1}}\right)$$
    and
    $$\frac{\eta^p}{p!}\sum_{j=1}^{p-1}\binom{p}{j}j!\stirling{p}{j}_{\leq p-1}=-\zetax p^{1+\frac{1}{p-1}}\cdot U_p+O\left(p^3\right).$$
    On the other hand, since $k-p+1\leq p-1$ for $k\leq 2p-2$, we have
    $$\sum_{k=p}^{2p-1}\frac{p!}{k!}\stirling{k}{p}_{\leq p-1}\eta^k=\sum_{k=p}^{2p-2}\frac{p!}{k!}\stirling{k}{p}\eta^k+\frac{p!}{(2p-1)!}\stirling{2p-1}{p}_{\leq p-1}.$$
    Since  \(v_p(p!)=v_p(k!)=1\) for \(k=p,\cdots 2p-2\), by \Cref{it:2} we have
    $$\frac{p!}{k!}\stirling{k}{p}=\begin{cases}O(p),&\text{ if }p<k\leq 2p-2 ;\\1,&\text{ if }k=p.\end{cases}$$
    Besides that, by \Cref{it:3} one has $v_p\left(\frac{p!}{(2p-1)!}\right)=0$ and $$v_p\left(\stirling{2p-1}{p}_{\leq p-1}\right)\geq 1.$$ Therefore
    $$\sum_{k=p}^{2p-1}\frac{p!}{k!}\stirling{k}{p}_{\leq p-1}\eta^k=\eta^p+\sum_{k=p+1}^{2p-2}O\left(p^{1+\frac{k}{p-1}}\right)+O\left(p^{1+\frac{2p-1}{p-1}}\right)=\eta^p+O\left(p^{2+\frac{2}{p-1}}\right).$$

    To sum up, we obtain
    $
        \left(\sum_{l=0}^{p-1}\frac{\eta^l}{l!}\right)^p-1        =                                                    -\zetax p^{1+\frac{1}{p-1}}\cdot U_p+O\left(p^{2+\frac{2}{p-1}}\right).$

\end{proof}
\begin{proposition}\label{prop:55108}
    Let $A\in\bbL_p$. If
    $A=\tilde{\Lambda}_{p-1}^++O\left(p^{1+\frac{1}{p-1}}\right)$,
    then
    $$A^p-1=p\left(A-\tilde{\Lambda}_{p-1}^+\right)+O\left(p^{1+\frac{2}{p-1}}\right).$$
\end{proposition}
\begin{proof}
    Since $\widehat{\tilde{\Lambda}}_{p-1}-\tilde{\Lambda}_{p-1}\in O\left(p^{1+\frac{2}{p-1}}\right)$, we have
    \begin{align*}
        A^p-1= & \left(\widehat{\tilde{\Lambda}}_{p-1}+\zetax p^{\frac{1}{p-1}}U_p+A-\tilde{\Lambda}_{p-1}^++O\left(p^{1+\frac{2}{p-1}}\right)\right)^p-1                                                                                                                                                                                                      \\
        =      & \widehat{\tilde{\Lambda}}_{p-1}^p-1                                                                                                                                     +\sum_{t=1}^p\binom{p}{t}\widehat{\tilde{\Lambda}}_{p-1}^{p-t}\left(\zetax p^{\frac{1}{p-1}}U_p+A-\tilde{\Lambda}_{p-1}^++O\left(p^{1+\frac{2}{p-1}}\right)\right)^t.
    \end{align*}
    By \Cref{lem:13884}, we know that
    \[\widehat{\tilde{\Lambda}}_{p-1}^p-1=-\zetax p^{1+\frac{1}{p-1}}\cdot U_p+O\left(p^{2+\frac{2}{p-1}}\right).\]
    Since $$v_p\left(\left(\zetax p^{\frac{1}{p-1}}U_p+A-\tilde{\Lambda}_{p-1}^++O\left(p^{1+\frac{2}{p-1}}\right)\right)^t\right)\geq t\left(1+\frac{1}{p-1}\right)$$
    and $\widehat{\tilde{\Lambda}}_{p-1}^{p-t}=1+O\left(p^{\frac{1}{p-1}}\right)$,
    we have
    \begin{align*}
        A^p-1= & -\zetax p^{1+\frac{1}{p-1}}\cdot U_p+O\left(p^{2+\frac{2}{p-1}}\right)                                                                                    \\
               & \quad+p\left(\zetax p^{\frac{1}{p-1}}U_p+A-\tilde{\Lambda}_{p-1}^++O\left(p^{1+\frac{2}{p-1}}\right)\right)\left(1+O\left(p^{\frac{1}{p-1}}\right)\right) \\
        =      & p\left(A-\tilde{\Lambda}_{p-1}^+\right)+O\left(p^{2+\frac{2}{p-1}}\right).
    \end{align*}
\end{proof}

\subsubsection{The truncated expansions of $\Lambda^p$ }
\begin{enumerate}[leftmargin=\itemindent]
    \item We first study the $\left(1+\frac{1}{p-1}+\frac{2}{p(p-1)}-\frac{1}{p^2}\right)$-truncated expansion of
          $$\Lambda\coloneqq\Lambda_{p-1}+\left(1+\lambda\right)\zetax p^{\frac{1}{p-1}}\sigma_2$$
          with $\sigma_2=\sum_{k=2}^{+\infty}p^{-1/p^k}$.
          \begin{lemma}\label{lem:23648}
              The $\left(1+\frac{1}{p-1}+\frac{2}{p(p-1)}-\frac{1}{p^2}\right)$-truncated expansion of $\Lambda^p-1$ is the following:
              $$\widehat{\tilde{\Lambda}}_{p-1}-1+\zetax p^{\frac{1}{p-1}}\cdot U_p+\zetax^2 p^{1+\frac{1}{p-1}+\frac{1}{p(p-1)}}+O\left(p^{1+\frac{1}{p-1}+\frac{2}{p(p-1)}-\frac{1}{p^2}}\right).$$
          \end{lemma}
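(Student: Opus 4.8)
The plan is to write $\Lambda=\Lambda_{p-1}+\beta$ with $\beta=(1+\lambda)\zetax p^{\frac{1}{p-1}}\sigma_2$, so that $v_p(\sigma_2)=-\frac{1}{p^2}$ gives $v_p(\beta)=\frac{1}{p-1}-\frac{1}{p^2}$, and then to expand
$$\Lambda^p-1=\left(\Lambda_{p-1}^p-1\right)+p\Lambda_{p-1}^{p-1}\beta+\sum_{k=2}^{p-1}\binom{p}{k}\Lambda_{p-1}^{p-k}\beta^k+\beta^p.$$
Write $r=1+\frac{1}{p-1}+\frac{2}{p(p-1)}-\frac{1}{p^2}$ for the target truncation level. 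The middle sum has valuation $\geq 1+2v_p(\beta)=1+\frac{2}{p-1}-\frac{2}{p^2}$, which exceeds $r$ precisely when $p^2-3p+1>0$, i.e. for all $p\geq 3$; hence it is absorbed into $O(p^r)$. For $\Lambda_{p-1}^p-1$ I invoke \Cref{lem:43810}: since $r<1+\frac{2}{p-1}$ (equivalently $p+1<p^2$) and every term of the sum $\sum_{n=2}^{p-1}\frac{(-1)^{n+1}}{n!n}\zetax^{n+1}p^{1+\frac{1}{p-1}+\frac{n}{p(p-1)}}$ has valuation $\geq 1+\frac{1}{p-1}+\frac{2}{p(p-1)}>r$, it simplifies to $\Lambda_{p-1}^p-1=\widehat{\tilde{\Lambda}}_{p-1}-1+\zetax p^{1+\frac{1}{p(p-1)}}+\zetax p^{\frac{1}{p-1}}U_p+O(p^r)$.

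The next step is to compute the part of $\beta^p=(1+\lambda)^p\zetax^p p^{\frac{p}{p-1}}\sigma_2^p$ of valuation below $r$. By \Cref{lem:16960} (equivalently \Cref{lemma:12551} with $m=n=1$), $\sigma_2^p=\sigma_1+O\left(p^{1-\frac{1}{p}}\right)$ with $\sigma_1=p^{-1/p}+\sigma_2$, while the binomial theorem gives $(1+\lambda)^p=1+\zetax^p p^{\frac{1}{p-1}}+\zetax p^{1+\frac{1}{p(p-1)}}+O\left(p^{1+\frac{2}{p(p-1)}}\right)$. Multiplying these out, discarding the (easily checked) contributions of valuation $\geq r$, and using that $\zetax^{p-1}=\zeta_{2(p-1)}^{p-1}=-1$, hence $\zetax^p=-\zetax$ and $\zetax^{2p}=\zetax^2$, one obtains
$$\beta^p=-\zetax p^{1+\frac{1}{p(p-1)}}-\zetax p^{1+\frac{1}{p-1}}\sigma_2+\zetax^2 p^{1+\frac{1}{p-1}+\frac{1}{p(p-1)}}+O(p^r).$$
The decisive feature is that the leading term of $\beta^p$ is $\zetax^p p^{\frac{p}{p-1}-\frac{1}{p}}=-\zetax p^{1+\frac{1}{p(p-1)}}$, which cancels the term $\zetax p^{1+\frac{1}{p(p-1)}}$ produced by \Cref{lem:43810}.

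Collecting the three pieces and performing this cancellation leaves
$$\Lambda^p-1=\widehat{\tilde{\Lambda}}_{p-1}-1+\zetax p^{\frac{1}{p-1}}U_p+\zetax^2 p^{1+\frac{1}{p-1}+\frac{1}{p(p-1)}}+\left(p\Lambda_{p-1}^{p-1}\beta-\zetax p^{1+\frac{1}{p-1}}\sigma_2\right)+O(p^r).$$
Since $p\Lambda_{p-1}^{p-1}\beta=\zetax p^{1+\frac{1}{p-1}}(1+\lambda)\Lambda_{p-1}^{p-1}\sigma_2$, the parenthesized remainder equals $\zetax p^{1+\frac{1}{p-1}}\sigma_2\bigl((1+\lambda)\Lambda_{p-1}^{p-1}-1\bigr)$, and I would finish by showing $v_p\bigl((1+\lambda)\Lambda_{p-1}^{p-1}-1\bigr)\geq\frac{2}{p(p-1)}$: its constant Teichmüller coefficient is $1\cdot 1-1=0$, and---since in the range of valuations $<1$ the Teichmüller coefficients of a product in $\bbL_p$ are computed by ordinary polynomial multiplication modulo $p$ (carries land at valuation $\geq 1$)---the coefficient of $\lambda$ reduces to the $y$-coefficient of $(1+y)\bigl(\sum_{k=0}^{p-1}\frac{y^k}{k!}\bigr)^{p-1}$ in $\bbF_p[y]$, namely $(-1)+1=0$. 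Consequently the remainder has valuation $\geq 1+\frac{1}{p-1}-\frac{1}{p^2}+\frac{2}{p(p-1)}=r$ and is absorbed into $O(p^r)$, yielding exactly the asserted expansion. The main obstacle is bookkeeping rather than conceptual: the level $r$ is hit \emph{exactly} by the $p\Lambda_{p-1}^{p-1}\beta$-against-$\beta^p$ interaction, so several valuation comparisons are tight and must be verified carefully, and one must make sure the polynomial-coefficient description of low-valuation Teichmüller coefficients is legitimate.
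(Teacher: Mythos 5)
Your proposal is correct and follows essentially the same route as the paper: the same splitting $\Lambda^p-1=(\Lambda_{p-1}^p-1)+p\Lambda_{p-1}^{p-1}\beta+\sum_{k=2}^{p-1}\binom{p}{k}\Lambda_{p-1}^{p-k}\beta^k+\beta^p$, the same appeal to \Cref{lem:43810} and to $\sigma_2^p=\sigma_1+O\left(p^{1-1/p}\right)$, and the same cancellation of $\zetax p^{1+\frac{1}{p(p-1)}}$ against the leading term of $\beta^p$. The only cosmetic difference is that you absorb the higher-order part of the $j=1$ term by factoring out $(1+\lambda)\Lambda_{p-1}^{p-1}-1$, whereas the paper expands $\Lambda_{p-1}^{p-1}=1+(p-1)\lambda+O\left(p^{\frac{2}{p(p-1)}}\right)$ directly; both verifications are equivalent.
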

          \begin{proof}
              Since $\Lambda=\Lambda_{p-1}+\left(1+\zetax p^{\frac{1}{p(p-1)}}\right)\zetax p^{\frac{1}{p-1}}\sigma_2$, we have
              \[ \Lambda^p-1= \Lambda_{p-1}^p-1+\sum_{j=1}^{p}\binom{p}{j}\Lambda_{p-1}^{p-j}\left(\left(1+\zetax p^{\frac{1}{p(p-1)}}\right)\zetax p^{\frac{1}{p-1}}\sigma_2\right)^j.   \]
              Notice that, the \Cref{lemma:12551} shows that $\sigma_2^p=\sigma_1+O(p^{1-1/p})$. Thus,
              \begin{enumerate}
                  \item for $2\leq j\leq p-1$, we have
                        \begin{align*}
                            v_p\left(\binom{p}{j}\Lambda_{p-1}^{p-j}\left(\left(1+\zetax p^{\frac{1}{p(p-1)}}\right)\zetax p^{\frac{1}{p-1}}\sigma_2\right)^j\right)
                            \geq  1+ \frac{2}{p-1}-\frac{2}{p^2}.\end{align*}
                  \item for $j=p$, by binomial expansion and truncation, we have
                        \begin{align*}
                              & \left(\left(1+\zetax p^{\frac{1}{p(p-1)}}\right)\zetax p^{\frac{1}{p-1}}\sigma_2\right)^p                                                                                                                                                                 \\
                            = & \left(1-\zetax p^{\frac{1}{p-1}}\right)                                                                                         \cdot\left(-\zetax p^{1+\frac{1}{p(p-1)}}-\zetax p^{1+\frac{1}{p-1}}\sigma_2+O\left(p^{2+\frac{1}{p(p-1)}}\right)\right).
                        \end{align*}
                  \item for $j=1$, by binomial expansion and truncation, we have
                        \begin{align*}
                              & p\Lambda_{p-1}^{p-1}\left(\left(1+\zetax p^{\frac{1}{p(p-1)}}\right)\zetax p^{\frac{1}{p-1}}\sigma_2\right)                                                                                \\
                            = & \left(1-\zetax p^{\frac{1}{p(p-1)}}\right)\left(1+\zetax p^{\frac{1}{p(p-1)}}\right)\zetax p^{1+\frac{1}{p-1}}\sigma_2 +O\left(p^{1+\frac{1}{p-1}+\frac{2}{p(p-1)}-\frac{1}{p^2}}\right) .
                        \end{align*}
              \end{enumerate}
              As a consequence, we have
              \begin{align*}
                  \Lambda^p-1
                  =  \Lambda_{p-1}^p-1-\zetax p^{1+\frac{1}{p(p-1)}}+\zetax^2 p^{1+\frac{1}{p-1}+\frac{1}{p(p-1)}}                                                  +O\left(p^{1+\frac{1}{p-1}+\frac{2}{p(p-1)}-\frac{1}{p^2}}\right).
              \end{align*}

              By \Cref{lem:43810}, we have
              \begin{align*}
                  \Lambda_{p-1}^p-1=  \widehat{\tilde{\Lambda}}_{p-1}-1+\zetax p^{1+\frac{1}{p(p-1)}}+\zetax p^{\frac{1}{p-1}}\cdot U_p +O\left(p^{1+\frac{1}{p-1}+\frac{2}{p(p-1)}-\frac{1}{p^2}}\right).
              \end{align*}
              Therefore,
              \begin{align*}
                  \Lambda^p-1=  \widehat{\tilde{\Lambda}}_{p-1}-1+\zetax p^{\frac{1}{p-1}}\cdot U_p+\zetax^2 p^{1+\frac{1}{p-1}+\frac{1}{p(p-1)}} +O\left(p^{1+\frac{1}{p-1}+\frac{2}{p(p-1)}-\frac{1}{p^2}}\right).
              \end{align*}
          \end{proof}
          By applying \Cref{prop:55108} to $\Lambda^p$, we know that
          \begin{equation}\label{eq:31503}
              \Lambda^{p^2}-1=\zetax^2 p^{2+\frac{1}{p-1}+\frac{1}{p(p-1)}}+o\left(p^{2+\frac{1}{p-1}+\frac{1}{p(p-1)}}\right).
          \end{equation}
    \item Now, we turn to study the second case. Recall that, we set
          \begin{align*}
              \mu_0=\Lambda_{p-1}+\left(1+\lambda\right)\zetax p^{\frac{1}{p-1}}\sigma_2, r_0=\frac{1}{p-1}+\frac{1}{p(p-1)}, r=\frac{2}{p-1}
          \end{align*} and
          $\mu=\Lambda_{p-1}\left(1+\zetax p^{\frac{1}{p-1}}\sigma_2\right)$.
          Suppose $\Lambda=\mu+\calM$ is a $r$-truncated expansion of $\zeta_{p^2}$ with an indeterminate $\calM$. By applying the $r_0$-truncation $\Lambda=\mu_0+O(p^{r_0})$ of $\Lambda$, we have
          \[
              \Lambda^p
              =         \tilde{\Lambda}_{p-1}^++o\left(p^{1+\frac{1}{p-1}}\right),
          \]
          where $ \tilde{\Lambda}_{p-1}^+=\sum_{l=0}^{p-1}\frac{(-1)^l}{[l!]}\zetax^l p^{\frac{l}{p-1}}+\zetax p^{\frac{1}{p-1}}U_p$.
          The following lemma confirms the assertion \Cref{cote1}.
          \begin{lemma}\label{lem:26652}
              We have $\Lambda^p=\tilde{\Lambda}_{p-1}^++O\left(p^{1+r}\right)$.
          \end{lemma}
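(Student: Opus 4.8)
The plan is to bootstrap the estimate $\Lambda^p=\tilde\Lambda_{p-1}^++o\!\left(p^{1+\frac1{p-1}}\right)$ --- already obtained via the $r_0$-truncation $\Lambda=\mu_0+O(p^{r_0})$ and the computation of $\mu_0^p$ in \Cref{lem:23648} --- up to the sharp $\Lambda^p=\tilde\Lambda_{p-1}^++O\!\left(p^{1+r}\right)$, using that $\Lambda$ is not merely close to $\tilde\Lambda_{p-1}^+$ but is an $r$-truncated expansion of a genuine root $\zeta_{p^2}$ of $\Phi_{p^2}$. The bridge between $\Lambda^p$ and $\Lambda^{p^2}$ is \Cref{lemma:55108}, and the leverage is that $\Lambda^{p^2}$ is forced to be extremely close to $1$.

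First I would record that $\delta:=\Lambda-\zeta_{p^2}$ satisfies $v_p(\delta)\ge r=\tfrac2{p-1}$. Since $\zeta_{p^2}^{p^2}=1$,
\[\Lambda^{p^2}-1=(\zeta_{p^2}+\delta)^{p^2}-1=\sum_{k=1}^{p^2}\binom{p^2}{k}\zeta_{p^2}^{\,p^2-k}\delta^{k},\]
and each summand has valuation $\ge 2+r$: for $1\le k\le p^2-1$ one has $v_p\binom{p^2}{k}=2-v_p(k)$ with $v_p(k)\le1$, so the claim reduces to $(k-1)\tfrac2{p-1}\ge v_p(k)$, which is immediate ($0\ge0$ for $p\nmid k$, and $(p-1)\tfrac2{p-1}=2\ge1$ once $p\mid k$, since then $k\ge p$); and for $k=p^2$ the valuation is $p^2v_p(\delta)\ge\frac{2p^2}{p-1}>2+r$. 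Hence $v_p\!\left(\Lambda^{p^2}-1\right)\ge 2+\tfrac2{p-1}$. This is the concrete form of the ``maximal-slope'' input alluded to in the text: equivalently $\Lambda^{p^2}-1=(\Lambda^p-1)\,\Phi_{p^2}(\Lambda)$ with $v_p(\Lambda^p-1)=\tfrac1{p-1}$ and $v_p(\Phi_{p^2}(\Lambda))=\sum_{\zeta}v_p(\zeta-\Lambda)=2-\tfrac1{p-1}+v_p(\delta)$, the sum over the $p(p-1)$ primitive $p^2$-th roots of unity --- of which $p(p-2)$ sit at distance $\tfrac1{p(p-1)}$ from $\Lambda$, $p-1$ at distance $\tfrac1{p-1}$, and only $\zeta_{p^2}$ at distance $\ge r$ --- so that the largest slope of $\Newt{P_\Lambda}$ is $v_p(\delta)\ge r$.

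To finish, note that $\Lambda^p=\tilde\Lambda_{p-1}^++O\!\left(p^{1+\frac1{p-1}}\right)$ meets the hypothesis of \Cref{lemma:55108}; applied to $A=\Lambda^p$, its proof in fact delivers the sharp identity $\Lambda^{p^2}-1=p\bigl(\Lambda^p-\tilde\Lambda_{p-1}^+\bigr)+O\!\left(p^{2+\frac2{p-1}}\right)$, the gain of a power of $p$ coming from the cancellation of the opposite $\pm\zetax p^{1+\frac1{p-1}}U_p$ terms (this is where \Cref{lem:13884} is used). Comparing with $v_p\!\left(\Lambda^{p^2}-1\right)\ge 2+\tfrac2{p-1}$ yields $v_p\bigl(\Lambda^p-\tilde\Lambda_{p-1}^+\bigr)\ge 1+\tfrac2{p-1}=1+r$, i.e.\ $\Lambda^p=\tilde\Lambda_{p-1}^++O(p^{1+r})$. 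The one point that must be handled with care is precisely this sharpness: with only the displayed form of \Cref{lemma:55108}, error $O\!\left(p^{1+\frac2{p-1}}\right)$, one would get merely $v_p\bigl(\Lambda^p-\tilde\Lambda_{p-1}^+\bigr)\ge\tfrac2{p-1}$, falling short by a whole power of $p$. Equivalently, the $\sigma_1$-type terms that $\sigma_2^{\,p}\sim\sigma_1$ injects into $\Lambda^p$ must cancel up to valuation $1+r$ --- which is exactly what the choice $\mu=\Lambda_{p-1}\!\left(1+\zetax p^{\frac1{p-1}}\sigma_2\right)$ secures --- and one then transports this through the $p^2$-nd power as above; this is the main obstacle.
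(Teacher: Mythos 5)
Your proof is correct and follows the same overall skeleton as the paper's: both arguments hinge on combining the identity $\Lambda^{p^2}-1=p\bigl(\Lambda^p-\tilde{\Lambda}_{p-1}^+\bigr)+O\bigl(p^{2+\frac{2}{p-1}}\bigr)$ coming from \Cref{lemma:55108} with the lower bound $v_p\bigl(\Lambda^{p^2}-1\bigr)\geq 2+\frac{2}{p-1}$, and then dividing by $p$. Where you diverge is in how that lower bound is obtained. The paper derives it from the geometry of $\Newt{\Phi_{p^2}(T+\Lambda)}$: it locates the intersection point $\calP=(p(p-1)-1,y_\calP)$ of the maximal-slope segment with a vertical line, bounds $y_\calP\geq 2-\frac{1}{p-1}$ by comparison with the Newton polygons $\Newt{\Phi^{(k,2)}}$ from the earlier work (citing Lemma 2.4 and Fig. 3 of that paper), and adds the slope bound $\fraks_\Lambda\geq\frac{2}{p-1}$. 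You instead write $\Lambda=\zeta_{p^2}+\delta$ with $v_p(\delta)\geq r$ and expand $(\zeta_{p^2}+\delta)^{p^2}-1$ binomially, checking term by term via $v_p\binom{p^2}{k}=2-v_p(k)$ that every summand has valuation at least $2+r$; your parenthetical factorization $\Lambda^{p^2}-1=(\Lambda^p-1)\,\Phi_{p^2}(\Lambda)$ with the root-by-root valuation count recovers exactly the paper's Newton-polygon data. Your route is more elementary and self-contained, avoiding the external references; the paper's route is uniform with the machinery it uses elsewhere. You are also right to flag the sharpness issue in \Cref{lemma:55108}: its displayed error term $O\bigl(p^{1+\frac{2}{p-1}}\bigr)$ would lose a full power of $p$ and fail to close the argument, but its proof (and the paper's own \Cref{eq:60660}) delivers $O\bigl(p^{2+\frac{2}{p-1}}\bigr)$, which is what both you and the paper actually use — this is a typo in the lemma's statement rather than a gap in your reasoning.
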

          \begin{proof}Since $ \Lambda^p
                  =         \tilde{\Lambda}_{p-1}^++o\left(p^{1+\frac{1}{p-1}}\right)$, by \Cref{prop:55108} we have
              \begin{equation}\label{eq:60660}
                  \Lambda^{p^2}-1=p\left(\Lambda^p-\tilde{\Lambda}_{p-1}^+\right)+O\left(p^{2+\frac{2}{p-1}}\right).
              \end{equation}

              On the other hand, since $\Lambda$ is an approximation of $\zeta_{p^2}$, the maximal slope of the Newton polygon of $\Phi_{p^2}\left(T+\Lambda \right)$ is equal to or greater than $ r=\frac{2}{p-1}$.

              Denote by $\calP=(p(p-1)-1,y_\calP)$ the intersection of the line $x=p(p-1)-1$ and the segment $\calL$ in $\Newt{\Phi(T+\Lambda)}$ with the maximal slope. By \cite[Lemma 2.4]{WangYuan2021}, the endpoints of $\calL$ are on or above the Newton polygon $\Newt{\Phi^{(k,2)}}$, where $\Phi^{(k,2)}=\Phi_{p^2}\left(T+\zeta_{p^2}^{(k-1)}\right)$, for any integer number $k\geq p$. Therefore, the point $\calP$ is on or above $\Newt{\Phi^{(k,2)}}$ for any $k\in\bbZ_{\geq p}$. By \cite[Fig. 3]{WangYuan2021}, for every $k\in\bbZ_{\geq p}$, we can calculate that the line $x=p(p-1)-1$ intersects with the segment with maximal slope in $\Newt{\Phi^{(k,2)}}$ at $\left(p(p-1)-1,2-\frac{1}{p-1}-\frac{p-1}{p^{k-p+2}}\right)$. As a consequence, for $k\in\bbZ_{\geq p}$, we have  $y_\calP\geq 2-\frac{1}{p-1}-\frac{p-1}{p^{k-p+2}}$, which implies $y_\calP\geq 2-\frac{1}{p-1}$. Therefore the maximal slope $\fraks_{\Lambda}=v_p\left(\frac{\Lambda^{p^2}-1}{\Lambda^p-1}\right)-y_\calP$ of $\Newt{\Phi(T+\Lambda)}$ is
              $\geq \frac{2}{p-1}$,
              and consequently
              \begin{align*}
                  v_p\left(\Lambda^{p^2}-1\right)\geq  v_p\left(\Lambda^p-1\right)+y_\calP+\frac{2}{p-1}
                  \geq            2+\frac{2}{p-1}.
              \end{align*}

              Compare this inequality with \Cref{eq:60660}, we know that
              $$v_p\left(p\left(\Lambda^p-\tilde{\Lambda}_{p-1}^+\right)\right)\geq 2+\frac{2}{p-1}.$$

          \end{proof}
\end{enumerate}

\subsubsection{Solve the congruence equation}\label{truncated3}
Recall that, we set
\begin{align*}
    \mu_0=\Lambda_{p-1}+\left(1+\lambda\right)\zetax p^{\frac{1}{p-1}}\sigma_2, r_0=\frac{1}{p-1}+\frac{1}{p(p-1)}, r=\frac{2}{p-1}
\end{align*} and
$\mu=\Lambda_{p-1}\left(1+\zetax p^{\frac{1}{p-1}}\sigma_2\right)$.
Suppose $\Lambda=\mu+\calM$ is a $r$-truncated expansion of $\zeta_{p^2}$ with an indeterminate $\calM$.
In the following, we compute the $(1+r)$-truncated expansion of $\mu\left(\frac{\tilde{\Lambda}_{p-1}^+}{\mu^p}-1\right)$ and solve the equation $$p\calM=\mu\left(\frac{\tilde{\Lambda}_{p-1}^+}{\mu^p}-1\right)+O\left(p^{1+r}\right).$$
\begin{lemma}\label{lem:23892} Let   \begin{align*}
        \calW= & \sum_{l=0}^{p-1}\frac{(-1)^l}{l!}\zetax^l p^{\frac{l}{p-1}}+\zetax p^{\frac{1}{p-1}}U_p+\zetax p^{1+\frac{1}{p(p-1)}} \\
               & \quad-\zetax^2 p^{1+\frac{1}{p(p-1)}+\frac{1}{p-1}}+\frac{1}{2}\zetax^2 p^{1+\frac{2}{p-1}}\sigma_2+\newerr.
    \end{align*}
    Then we have
    $$\frac{\tilde{\Lambda}_{p-1}^+}{\mu^p}=\frac{\calW}{\Lambda_{p-1}^p}+\newerr.$$

\end{lemma}
\begin{proof}
    By the definition of $\mu$, we only need to prove the truncated expansion of
    $$\frac{\tilde{\Lambda}_{p-1}^+}{\left(1+\zetax p^{\frac{1}{p-1}}\sigma_2\right)^p}+\newerr$$
    is $\calW$.
    By binomial expansion,
    $$\left(1+\zetax p^{\frac{1}{p-1}}\sigma_2\right)^p= 1+\zetax^p p^{\frac{p}{p-1}}\sigma_2^p+\sum_{k=1}^{p-1}\binom{p}{k}\zetax^k p^{\frac{k}{p-1}}\sigma_2^k .$$
    Since $v_p\left(\zetax^k p^{\frac{k}{p-1}}\sigma_2^k\right)\geq \frac{2}{p-1} \text{ for }k\geq 3$, and by \Cref{lemma:12551}, we have
    $$\sigma_2^p-\sigma_2=p^{-1/p}+\sigma_2+O\left(p^{1-1/p}\right)-\sigma_2=p^{-1/p}+O\left(p^{1-1/p}\right),$$
    we can write
    \begin{equation}\label{anonying2}
        \begin{split}
            \left(1+\zetax p^{\frac{1}{p-1}}\sigma_2\right)^p=
            & 1-\zetax p^{1+\frac{1}{p-1}}\left(\sigma_2^p-\sigma_2\right)              -\frac{1}{2}\zetax^2 p^{1+\frac{2}{p-1}}\sigma_2^2+\newerr \\
            =                 & 1-\zetax p^{1+\frac{1}{p(p-1)}}-\frac{1}{2}\zetax^2 p^{1+\frac{2}{p-1}}\sigma_2^2+\newerr.
        \end{split}
    \end{equation}

    By taking multiplicative inverse and expanding into power series, we have
    \begin{align*}
        \left(1+\zetax p^{\frac{1}{p-1}}\sigma_2\right)^{-p}= 1+\zetax p^{1+\frac{1}{p(p-1)}}+\frac{1}{2}\zetax^2 p^{1+\frac{2}{p-1}}\sigma_2^2+\newerr.
    \end{align*}
    As a result, by expanding the product and using the congruence $[k]\equiv k \bmod{p}$, the truncated expansion of $\frac{\tilde{\Lambda}_{p-1}^+}{\left(1+\zetax p^{\frac{1}{p-1}}\sigma_2\right)^p}$ is the following :
    \begin{align*}
          & \tilde{\Lambda}_{p-1}^+\left(1+\zetax p^{1+\frac{1}{p(p-1)}}+\frac{1}{2}\zetax^2 p^{1+\frac{2}{p-1}}\sigma_2^2+\newerr\right) \\
        = & \sum_{l=0}^{p-1}\frac{(-1)^l}{l!}\zetax^l p^{\frac{l}{p-1}}+\zetax p^{\frac{1}{p-1}}U_p+\zetax p^{1+\frac{1}{p(p-1)}}         \\
          & \quad-\zetax^2 p^{1+\frac{1}{p(p-1)}+\frac{1}{p-1}}+\frac{1}{2}\zetax^2 p^{1+\frac{2}{p-1}}\sigma_2^2+\newerr,
    \end{align*}
    which is exactly $\calW+\newerr$.
\end{proof}
Recall we set $\kappa= -\sum_{n=2}^{p-1}\frac{(-1)^{n+1}}{n!n}\zetax^{n+1}p^{1+\frac{1}{p-1}+\frac{n}{p(p-1)}}.$
\begin{corollary}\label{coro:46486}
    We have
    $$\frac{\tilde{\Lambda}_{p-1}^+}{\mu^p}-1=-\zetax^2 p^{1+\frac{1}{p(p-1)}+\frac{1}{p-1}}+\frac{1}{2}\zetax^2 p^{1+\frac{2}{p-1}}\sigma_2^2+\kappa+\newerr.$$
\end{corollary}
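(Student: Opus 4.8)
The plan is to feed the expansion of $\frac{1}{\Lambda_{p-1}^p}$ supplied by \Cref{lem:2239} into the identity $\frac{\tilde{\Lambda}_{p-1}^+}{\mu^p}=\frac{\calW}{\Lambda_{p-1}^p}+\newerr$ of \Cref{lem:23892}, expand the product $\calW\cdot\frac{1}{\Lambda_{p-1}^p}$ modulo $p^{1+\frac{2}{p-1}}$, and then subtract $1$. Both factors have the shape ``leading part $+$ lower-order corrections'': the leading part of $\calW$ is $\widehat{\tilde{\Lambda}}_{p-1}=\sum_{l=0}^{p-1}\frac{(-1)^l}{l!}\zetax^l p^{\frac{l}{p-1}}$ and that of $\frac{1}{\Lambda_{p-1}^p}$ is $\sum_{k=0}^{p-1}\frac{1}{k!}\zetax^k p^{\frac{k}{p-1}}$, while the remaining summands of either factor — the $\zetax p^{\frac{1}{p-1}}U_p$, $\zetax p^{1+\frac{1}{p(p-1)}}$, $\zetax^2 p^{1+\frac{1}{p(p-1)}+\frac{1}{p-1}}$, $\frac12\zetax^2 p^{1+\frac{2}{p-1}}\sigma_2^2$ terms and $\kappa$ — all have valuation in the window $\bigl[\frac{1}{p-1},\,1+\frac{2}{p-1}\bigr)$. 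Hence only finitely many cross-terms survive the truncation and the computation reduces to a finite bookkeeping.

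The one genuinely algebraic input is that the two leading parts multiply to $1$ up to the required precision. Writing $z=\zetax p^{\frac{1}{p-1}}$, so that $\widehat{\tilde{\Lambda}}_{p-1}=\sum_{l=0}^{p-1}\frac{(-z)^l}{l!}$ and the leading part of $\frac{1}{\Lambda_{p-1}^p}$ is $\sum_{k=0}^{p-1}\frac{z^k}{k!}$, their product is $\sum_{n\ge 0}c_n z^n$ with $c_n=\frac{1}{n!}\sum_{l=0}^n\binom{n}{l}(-1)^l=0$ for $1\le n\le p-1$, and $c_p=\frac{1}{p!}\sum_{l=1}^{p-1}\binom{p}{l}(-1)^l=0$ as well because $p$ is odd; since $v_p(z^n)=\frac{n}{p-1}\ge 1+\frac{2}{p-1}$ for $n\ge p+1$ and $v_p(c_n)\ge 0$ for $n\le 2p-2$ (each $c_n$ being a sum of rationals with denominator coprime to $p$), the product equals $1+\newerr$. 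For the cross-terms one multiplies the leading part ($=1+O(p^{\frac{1}{p-1}})$) of one factor against each correction term of the other; here it matters that $v_p(U_p)\ge 1$ by Wilson's theorem, so the $\zetax p^{\frac{1}{p-1}}U_p$ terms have valuation $\ge 1+\frac{1}{p-1}$ and acquire no further correction, and that $v_p(\sigma_2)=-\frac{1}{p^2}<0$, so $\frac12\zetax^2 p^{1+\frac{2}{p-1}}\sigma_2^2$ is not swallowed into $\newerr$; multiplying a $\zetax p^{1+\frac{1}{p(p-1)}}$ term by the subleading $\mp\zetax p^{\frac{1}{p-1}}$ produces a $\zetax^2 p^{1+\frac{1}{p(p-1)}+\frac{1}{p-1}}$ contribution. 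All products of two correction terms have valuation $\ge 2+\frac{2}{p(p-1)}>1+\frac{2}{p-1}$ and are negligible.

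Collecting the surviving contributions: the two $\zetax p^{\frac{1}{p-1}}U_p$ contributions (one from $\calW$, one from $\frac{1}{\Lambda_{p-1}^p}$) have opposite sign and cancel; likewise the two $\zetax p^{1+\frac{1}{p(p-1)}}$ contributions cancel; the coefficient of $\zetax^2 p^{1+\frac{1}{p(p-1)}+\frac{1}{p-1}}$ collapses to $1-2+1-1=-1$; the term $\frac12\zetax^2 p^{1+\frac{2}{p-1}}\sigma_2^2$ survives once; and $\kappa$ survives once. Thus $\frac{\calW}{\Lambda_{p-1}^p}=1-\zetax^2 p^{1+\frac{1}{p(p-1)}+\frac{1}{p-1}}+\frac12\zetax^2 p^{1+\frac{2}{p-1}}\sigma_2^2+\kappa+\newerr$, and subtracting $1$ together with \Cref{lem:23892} yields the stated formula. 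The main obstacle is not conceptual but the bookkeeping itself: one must retain every summand whose valuation falls in the narrow window $\bigl[\frac{1}{p-1},\,1+\frac{2}{p-1}\bigr)$ — keeping the Wilson-theorem bound on $U_p$ and the negativity of $v_p(\sigma_2)$ straight — and verify that the spurious-looking terms cancel in pairs, leaving exactly the three asserted terms and $\kappa$.
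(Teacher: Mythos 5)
Your proposal is correct and follows essentially the same route as the paper: both substitute \Cref{lem:2239} into \Cref{lem:23892}, reduce the product of the two leading sums to $1+\newerr$ via the vanishing of the alternating binomial sums $\sum_{i=0}^m\binom{m}{i}(-1)^i$ for $1\leq m\leq p-1$ (with the $m=p$ coefficient also vanishing because $p$ is odd), and then track the finitely many cross-terms in the window below $1+\frac{2}{p-1}$. Your explicit cancellation bookkeeping (the $U_p$ and $\zetax p^{1+\frac{1}{p(p-1)}}$ pairs cancelling, and the coefficient $1-2+1-1=-1$ of $\zetax^2 p^{1+\frac{1}{p(p-1)}+\frac{1}{p-1}}$) matches the paper's expansion step by step.
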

\begin{proof} Using $ \frac{\tilde{\Lambda}_{p-1}^+}{\mu^p}-1= \calW\cdot\frac{1}{\Lambda_{p-1}^p}-1                                                                                                 $, the formula for $\calW$ (cf. \Cref{lem:23892} ) and the $O\left(p^{1+\frac{2}{p-1}}\right)$-truncated expansion of $\frac{1}{\Lambda_{p-1}^p}$(cf. \Cref{lem:2239}), we have
    \begin{equation}\label{anonying3}
        \begin{split}
            \frac{\tilde{\Lambda}_{p-1}^+}{\mu^p}-1
            =& -1-\zetax^2 p^{1+\frac{1}{p(p-1)}+\frac{1}{p-1}}+\frac{1}{2}\zetax^2 p^{1+\frac{2}{p-1}}\sigma_2^2                                             \\
            & \quad +\kappa  +\left(\sum_{i=0}^{p-1}\frac{(-1)^i}{i!}\zetax^l p^{\frac{i}{p-1}}\right)\left(\sum_{j=0}^{p-1} \frac{\zetax^j}{j!}p^{\frac{j}{p-1}}\right)+\newerr.
        \end{split}
    \end{equation}

    The product $\left(\sum_{i=0}^{p-1}\frac{(-1)^i}{i!}\zetax^i p^{\frac{i}{p-1}}\right)\left(\sum_{j=0}^{p-1} \frac{\zetax^j}{j!}p^{\frac{j}{p-1}}\right)$ can be expanded as
    \[  \sum_{m=0}^{p-1}\frac{1}{m!}\zetax^m p^{\frac{m}{p-1}}\sum_{i=0}^m\binom{m}{i}(-1)^i+\newerr.
    \]
    Since
    $$\sum_{i=0}^m\binom{m}{i}(-1)^i=\begin{cases}1,&\ m=0\\0,&\ m>0\end{cases},$$
    we know that
    $$\left(\sum_{i=0}^{p-1}\frac{(-1)^i}{i!}\zetax^l p^{\frac{i}{p-1}}\right)\left(\sum_{j=0}^{p-1} \frac{\zetax^j}{j!}p^{\frac{j}{p-1}}\right)=1+\newerr.$$
    As a result, we have
    $$\Cref{anonying3}=-\zetax^2 p^{1+\frac{1}{p(p-1)}+\frac{1}{p-1}}+\frac{1}{2}\zetax^2 p^{1+\frac{2}{p-1}}\sigma_2^2+\kappa+\newerr.$$
\end{proof}
\begin{lemma}\label{lem:38120}
    One has
    \begin{align*}
        \calM= & \frac{1}{2}\zetax^2 p^{\frac{2}{p-1}}\sigma_2^2+\frac{1}{2}\zetax^3 p^{\frac{2}{p-1}-\frac{p-2}{p^2(p-1)}} -\sum_{k=1}^{p-1}\frac{H_k}{k!}\zetax^{k+1} p^{\frac{1}{p-1}+\frac{k}{p(p-1)}} +O\left(p^{\frac{2}{p-1}}\right).
    \end{align*}
\end{lemma}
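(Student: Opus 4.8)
The plan is to read off $\calM$ from the congruence $p\calM=\mu\left(\frac{\tilde{\Lambda}_{p-1}^{+}}{\mu^p}-1\right)+\newerr$ (with $r=\frac2{p-1}$) by inserting \Cref{coro:46486} for $\frac{\tilde{\Lambda}_{p-1}^{+}}{\mu^p}-1$ and then expanding the product against $\mu=\Lambda_{p-1}\left(1+\zetax p^{\frac1{p-1}}\sigma_2\right)$. Write $X:=\frac{\tilde{\Lambda}_{p-1}^{+}}{\mu^p}-1=-\zetax^2 p^{1+\frac1{p(p-1)}+\frac1{p-1}}+\frac12\zetax^2 p^{1+\frac2{p-1}}\sigma_2^2+\kappa+\newerr$. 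First I would record that every monomial of $X$ has valuation at least $1+\frac1{p(p-1)}+\frac1{p-1}$; since $v_p\!\left(\zetax p^{\frac1{p-1}}\sigma_2\right)=\frac1{p-1}-\frac1{p^2}$ and $\frac1{p(p-1)}\geq\frac1{p^2}$, the strong triangle inequality gives $\zetax p^{\frac1{p-1}}\sigma_2\cdot X=\newerr$, so that $p\calM=\Lambda_{p-1}\,X+\newerr$.

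Next I would multiply $\Lambda_{p-1}=\sum_{m=0}^{p-1}\frac1{[m!]}\zetax^m p^{\frac{m}{p(p-1)}}$ against $X$ term by term, retaining only monomials of valuation $<1+\frac2{p-1}$. The piece $\frac12\zetax^2 p^{1+\frac2{p-1}}\sigma_2^2$ of $X$ produces $\frac12\zetax^2 p^{1+\frac2{p-1}}\sigma_2^2$ (against the constant term of $\Lambda_{p-1}$) and $\frac12\zetax^3 p^{1+\frac2{p-1}-\frac{p-2}{p^2(p-1)}}$ (the leading monomial $\zetax p^{\frac1{p(p-1)}}$ of $\Lambda_{p-1}-1$ against the leading monomial $p^{-2/p^2}$ of $\sigma_2^2$), everything else being absorbed; here the comparisons $\frac1{p(p-1)}<\frac2{p^2}$ and $\frac1{p(p-1)}\geq\frac1{p^2}+\frac1{p^3}$ decide exactly which products survive. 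The piece $-\zetax^2 p^{1+\frac1{p(p-1)}+\frac1{p-1}}$ produces $-\sum_{k=1}^{p-1}\frac1{(k-1)!}\zetax^{k+1}p^{1+\frac1{p-1}+\frac{k}{p(p-1)}}$, and $\kappa$ produces $-\sum_{k=2}^{p-1}\Bigl(\sum_{n=2}^{k}\frac{(-1)^{n+1}}{n!\,n\,(k-n)!}\Bigr)\zetax^{k+1}p^{1+\frac1{p-1}+\frac{k}{p(p-1)}}$; since these last two families land on the same monomials, the coefficient of $-\zetax^{k+1}p^{1+\frac1{p-1}+\frac{k}{p(p-1)}}$ in $\Lambda_{p-1}X$ is congruent modulo $p$ to $\frac1{(k-1)!}+\sum_{n=2}^{k}\frac{(-1)^{n+1}}{n!\,n\,(k-n)!}$ for $1\leq k\leq p-1$.

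The final ingredient is the harmonic identity: by \Cref{lem:35904} (equivalently \Cref{maintheorm1}) that coefficient is $\equiv\frac{H_k}{k!}\bmod p$, and the discrepancy, being $p$ times an element of $\bbZ_{(p)}$, gives a term of valuation $>1+\frac2{p-1}$ and so disappears into $\newerr$. Collecting everything yields $p\calM=\frac12\zetax^2 p^{1+\frac2{p-1}}\sigma_2^2+\frac12\zetax^3 p^{1+\frac2{p-1}-\frac{p-2}{p^2(p-1)}}-\sum_{k=1}^{p-1}\frac{H_k}{k!}\zetax^{k+1}p^{1+\frac1{p-1}+\frac{k}{p(p-1)}}+\newerr$, and dividing by $p$ gives the stated formula. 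I expect the main obstacle to be the monomial bookkeeping of the middle step: one has to decide precisely which of the (infinitely many) products of monomials coming from $\Lambda_{p-1}$, from the $\sigma_2$ inside $\mu$, and from $\sigma_2^2$ inside $X$ survive modulo $p^{1+\frac2{p-1}}$, and the cleanest way to organize this is to first establish uniform lower bounds for the valuations occurring in $X$ and in $\Lambda_{p-1}-1$; once the surviving terms are isolated, the collapse to $H_k$ is immediate from the already-proven identity.
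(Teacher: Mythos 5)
Your proposal is correct and follows essentially the same route as the paper's proof: substitute \Cref{coro:46486} into $p\calM=\mu\left(\frac{\tilde{\Lambda}_{p-1}^{+}}{\mu^p}-1\right)+O\left(p^{1+\frac{2}{p-1}}\right)$, expand $\Lambda_{p-1}$ against the three pieces $-\zetax^2 p^{1+\frac{1}{p(p-1)}+\frac{1}{p-1}}$, $\frac{1}{2}\zetax^2 p^{1+\frac{2}{p-1}}\sigma_2^2$ and $\kappa$, collapse the resulting coefficients to $\frac{H_k}{k!}$, and absorb the $\zetax p^{\frac{1}{p-1}}\sigma_2$ factor of $\mu$ into the error. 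The only cosmetic difference is that the paper invokes the exact identity $\sum_{k=1}^n\binom{n}{k}\frac{(-1)^{k+1}}{k}=H_n$ directly rather than the mod-$p$ statement of \Cref{maintheorm1}, but your handling of the discrepancy makes this immaterial.
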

\begin{proof}Recall $\calM=\frac{1}{p}\left(1+\zetax p^{\frac{1}{p-1}}\sigma_2\right)\Lambda_{p-1}\left(\frac{\tilde{\Lambda}_{p-1}^+}{\mu^p}-1\right)$.
    By \Cref{coro:46486}, after expanding the product and rearranging terms, we obtain
    \begin{align*}
        \Lambda_{p-1}\left(\frac{\tilde{\Lambda}_{p-1}^+}{\mu^p}-1\right) = & \newerr+\frac{1}{2}\zetax^2 p^{1+\frac{2}{p-1}}\sigma_2^2+\frac{1}{2}\zetax^3 p^{1+\frac{2}{p-1}-\frac{p-2}{p^2(p-1)}}           \\
                                                                            & \quad-\zetax^2 p^{1+\frac{1}{p-1}+\frac{1}{p(p-1)}}                                                                              \\
                                                                            & \quad -\sum_{n=2}^{p-1}\frac{1}{n!}\zetax^{n+1}p^{1+\frac{1}{p-1}+\frac{n}{p(p-1)}}\sum_{k=1}^n\binom{n}{k}\frac{(-1)^{k+1}}{k}.
    \end{align*}

    By \cite[(9.2)]{Boyadzhiev2018}, we know that $\sum_{k=1}^n\binom{n}{k}\frac{(-1)^{k+1}}{k}=H_n$. Therefore by moving the term $-\zetax^2 p^{1+\frac{1}{p-1}+\frac{1}{p(p-1)}}$ into the summation,
    \begin{align*}
          & \Lambda_{p-1}\left(\frac{\tilde{\Lambda}_{p-1}^+}{\mu^p}-1\right)                                                                                            \\
        = & \frac{1}{2}\zetax^2 p^{1+\frac{2}{p-1}}\sigma_2^2+\frac{1}{2}\zetax^3 p^{1+\frac{2}{p-1}-\frac{p-2}{p^2(p-1)}}
        -\sum_{n=1}^{p-1}\frac{H_n}{n!}\zetax^{n+1}p^{1+\frac{1}{p-1}+\frac{n}{p(p-1)}}+\newerr.
    \end{align*}
    As a result,
    \begin{align*}
        \calM= & \frac{1}{p}\left(1+\zetax p^{\frac{1}{p-1}}\sigma_2\right)\Lambda_{p-1}\left(\frac{\tilde{\Lambda}_{p-1}^+}{\mu^p}-1\right) \\
        =      & \frac{1}{2}\zetax^2 p^{\frac{2}{p-1}}\sigma_2^2+\frac{1}{2}\zetax^3 p^{\frac{2}{p-1}-\frac{p-2}{p^2(p-1)}}
        -\sum_{n=1}^{p-1}\frac{H_n}{n!}\zetax^{n+1}p^{\frac{1}{p-1}+\frac{n}{p(p-1)}}+O\left(p^{\frac{2}{p-1}}\right).
    \end{align*}
\end{proof}
Finally, we obtain the $\frac{2}{p-1}$-truncated expansion of $\zeta_{p^2}$:
\begin{theorem}\label{mainexpansion}
    We have
    \begin{align*}
        \zeta_{p^2}= & \left(\sum_{k=0}^{p-1}\frac{1}{[k!]}\zetax^k p^{\frac{k}{p(p-1)}}\right)\left(1+\zetax p^{\frac{1}{p-1}}\sigma_2\right)+\frac{1}{2}\zetax^2 p^{\frac{2}{p-1}}\sigma_2^2        \\
                     & \quad+\frac{1}{2}\zetax^3p^{\frac{2}{p-1}-\frac{p-2}{p^2(p-1)}}-\sum_{k=1}^{p-1}\frac{H_k}{k!}\zetax^{k+1} p^{\frac{1}{p-1}+\frac{k}{p(p-1)}}+O\left(p^{\frac{2}{p-1}}\right).
    \end{align*}
\end{theorem}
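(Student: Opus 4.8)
The plan is to deduce \Cref{mainexpansion} by collecting the ingredients already assembled above: I would write $\zeta_{p^2}=\mu+\calM$ with $\mu=\Lambda_{p-1}\left(1+\zetax p^{\frac{1}{p-1}}\sigma_2\right)$, read off $\calM$ from \Cref{lem:38120}, and expand $\mu$ using $\Lambda_{p-1}=\sum_{k=0}^{p-1}\frac{1}{[k!]}\zetax^k p^{\frac{k}{p(p-1)}}$.

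First I would fix the decomposition. By \Cref{prop:47112} one has $\zeta_{p^2}=\mu_0+O\left(p^{r_0}\right)$ with $\mu_0=\Lambda_{p-1}+(1+\lambda)\zetax p^{\frac{1}{p-1}}\sigma_2$ and $r_0=\frac{1}{p-1}+\frac{1}{p(p-1)}$. Since $\Lambda_{p-1}=1+\lambda+O\!\left(p^{\frac{2}{p(p-1)}}\right)$, one computes $v_p(\mu-\mu_0)=v_p(\Lambda_{p-1}-1-\lambda)+\frac{1}{p-1}-\frac{1}{p^2}\geq\frac{2}{p(p-1)}+\frac{1}{p-1}-\frac{1}{p^2}>r_0$, hence $\zeta_{p^2}=\mu+O\left(p^{r_0}\right)$ as well. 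Setting $\calM\coloneqq\zeta_{p^2}-\mu$, we have $v_p(\calM)\geq r_0$, and $\zeta_{p^2}=\mu+\calM$ is a $\frac{2}{p-1}$-truncated expansion of $\zeta_{p^2}$ in the sense of the definition from the Introduction.

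Next I would extract the congruence satisfied by $\calM$. Expanding $\zeta_{p^2}^p=(\mu+\calM)^p=\mu^p+p\mu^{p-1}\calM+\sum_{k=2}^{p}\binom{p}{k}\mu^{p-k}\calM^k$ and noting that $v_p\!\left(\binom{p}{k}\mu^{p-k}\calM^k\right)\geq 1+\frac{2}{p-1}$ for $2\leq k\leq p$ (because $2r_0>\frac{2}{p-1}$ and $pr_0=1+\frac{2}{p-1}$), one gets $\zeta_{p^2}^p=\mu^p+p\mu^{p-1}\calM+\newerr$. On the other hand, \Cref{lem:26652}, itself obtained from \Cref{lem:23648}, \Cref{lemma:55108} and the Newton-polygon estimate for $\Phi_{p^2}(T+\zeta_{p^2})$ furnished by the variant of the transfinite Newton algorithm, gives $\zeta_{p^2}^p=\tilde{\Lambda}_{p-1}^++\newerr$. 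Comparing the two and dividing by the unit $\mu^{p-1}$ yields
\[p\calM=\mu\left(\frac{\tilde{\Lambda}_{p-1}^+}{\mu^p}-1\right)+\newerr,\]
which determines $\calM$ modulo $p^{\frac{2}{p-1}}$. Now \Cref{lem:38120} evaluates precisely this right-hand side (via \Cref{coro:46486} and the explicit form of $\mu$), giving
\[\calM=\frac{1}{2}\zetax^2 p^{\frac{2}{p-1}}\sigma_2^2+\frac{1}{2}\zetax^3p^{\frac{2}{p-1}-\frac{p-2}{p^2(p-1)}}-\sum_{k=1}^{p-1}\frac{H_k}{k!}\zetax^{k+1} p^{\frac{1}{p-1}+\frac{k}{p(p-1)}}+O\left(p^{\frac{2}{p-1}}\right).\]

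Finally I would substitute $\Lambda_{p-1}=\sum_{k=0}^{p-1}\frac{1}{[k!]}\zetax^k p^{\frac{k}{p(p-1)}}$ into $\mu=\Lambda_{p-1}\left(1+\zetax p^{\frac{1}{p-1}}\sigma_2\right)$ and add the displayed value of $\calM$; the only thing to verify is that every exponent occurring in $\calM$ lies strictly below $\frac{2}{p-1}$, namely $\frac{2}{p-1}-\frac{2}{p^2}$ (from $\sigma_2^2$), $\frac{2}{p-1}-\frac{p-2}{p^2(p-1)}$, and $\frac{1}{p-1}+\frac{k}{p(p-1)}\leq\frac{2p-1}{p(p-1)}<\frac{2}{p-1}$ for $1\leq k\leq p-1$, so that nothing is absorbed into the error term and the $O\!\left(p^{\frac{2}{p-1}}\right)$ of \Cref{lem:38120} is exactly the asserted truncation. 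This produces the formula of \Cref{mainexpansion}. The genuinely hard work in this argument is not the bookkeeping but the two inputs it rests on: \Cref{lem:26652}, where the Newton-polygon bound (through \cite[Lemma 2.4]{WangYuan2021} and \cite[Fig. 3]{WangYuan2021}) is what upgrades the error in $\zeta_{p^2}^p$ from $o\!\left(p^{1+\frac{1}{p-1}}\right)$ to $\newerr$; and \Cref{lem:38120}, whose evaluation of $\mu\!\left(\tilde{\Lambda}_{p-1}^+/\mu^p-1\right)$ relies on \Cref{lem:35904} (equivalently the $\bmod\ p$ harmonic number identity \Cref{maintheorm1}) to collapse the restricted-Stirling-number sums into the harmonic numbers $H_k$. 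Granting those, \Cref{mainexpansion} is simply the identity $\zeta_{p^2}=\mu+\calM$.
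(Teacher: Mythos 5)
Your proposal is correct and follows essentially the same route as the paper: the theorem is obtained there by combining the decomposition $\zeta_{p^2}=\mu+\calM$ with $\mu=\Lambda_{p-1}\left(1+\zetax p^{\frac{1}{p-1}}\sigma_2\right)$, the congruence $p\calM=\mu\left(\frac{\tilde{\Lambda}_{p-1}^+}{\mu^p}-1\right)+\newerr$ coming from \Cref{lem:26652}, and the evaluation of $\calM$ in \Cref{lem:38120}. Your only departures are cosmetic improvements in rigor (defining $\calM\coloneqq\zeta_{p^2}-\mu$ outright and checking $v_p(\mu-\mu_0)>r_0$ explicitly), which the paper leaves implicit.
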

\begin{remark}
    One obtains the $\frac{2}{p-1}$-truncated canonical expansion of $\zeta_{p^2}$ by expanding the product in \Cref{mainexpansion} and replace the coefficient of each term by its Teichmuller lifting.
\end{remark}

\subsection{The truncated expansion of $\zeta_{p^n}$}
\begin{lemma}\label{monter}
    Suppose $A\in\calO_{\bbL_p}$ has $1$-truncated canonical expansion
    $$A=\sum_{g\in\Supp{A}\cap [0,1)}[a_g]p^g +O\left(p^1\right).$$
    Then there exists a $p$-th root of $A$ in $\bbL_p$ with $\frac{1}{p}$-truncated canonical expansion given by
    $$B=\sum_{g\in\Supp{A}\cap [0,1)}[a_g^{\frac{1}{p}}]p^{\frac{g}{p}}+O\left(p^{\frac{1}{p}}\right)$$
\end{lemma}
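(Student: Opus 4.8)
The plan is to write down the expected leading part $C$ of the $p$-th root, check that $C^p$ agrees with $A$ up to valuation $1$, and then correct $C$ to an exact $p$-th root without disturbing its canonical expansion below $\frac1p$. Concretely, I would set
\[
  C=\sum_{g\in\Supp{A}\cap[0,1)}[a_g^{1/p}]\,p^{g/p}\in\calO_{\bbL_p},
\]
where $a_g^{1/p}\in\bar{\bbF}_p$ is the unique $p$-th root of $a_g$ (Frobenius being bijective on $\bar{\bbF}_p$). Since $\Supp{C}\subseteq[0,\frac1p)$, this expression is already the canonical expansion of $C$, and it coincides with the $\frac1p$-truncated canonical expansion of $C$. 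Hence it suffices to produce $B\in\bbL_p$ with $B^p=A$ and $v_p(B-C)\geq\frac1p$: then $B-C$ is supported in $[\frac1p,\infty)$, disjoint from $\Supp{C}$, so no carrying occurs between the two ranges of exponents and the canonical expansion of $B$ restricted to $(-\infty,\frac1p)$ equals that of $C$, which is exactly the asserted $\sum_{g}[a_g^{1/p}]p^{g/p}+O(p^{1/p})$.

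The next step is to estimate $C^p$. Because $\Supp{C}\subseteq[0,\frac1p)$, \Cref{lem:16960} applies, and since the coefficient of $p^{g/p}$ in $C$ is $a_g^{1/p}$, whose $p$-th power is $a_g$, and $p\cdot\frac{g}{p}=g$, it yields
\[
  C^p=\sum_{g\in\Supp{A}\cap[0,1)}[a_g]\,p^{g}+O\left(p^{1+pv_p(C)}\right).
\]
On the other hand, the hypothesis on $A$ states precisely that $A-\sum_{g\in\Supp{A}\cap[0,1)}[a_g]p^{g}=\sum_{g\in\Supp{A},\,g\geq1}[a_g]p^{g}$ has valuation $\geq1$. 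Subtracting the two displays and using $v_p(C)\geq0$, I obtain $v_p(C^p-A)\geq1$.

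Finally, $C^p$ agrees with $A$ only to precision $p^{1}$, which is too coarse for a naive Hensel-type correction (the ``derivative'' $pC^{p-1}$ has valuation $1+(p-1)v_p(C)\geq1$). Instead I would use that $\bbL_p$ is algebraically closed: fix any $p$-th root $B_0$ of $A$ in $\bbL_p$ and factor
\[
  C^p-A=C^p-B_0^{\,p}=\prod_{\zeta\in\mu_p(\bbL_p)}\bigl(C-\zeta B_0\bigr),
\]
the product running over the $p$ distinct $p$-th roots of unity in $\bbL_p$. Taking valuations, $\sum_{\zeta}v_p(C-\zeta B_0)=v_p(C^p-A)\geq1$, so $v_p(C-\zeta_0 B_0)\geq\frac1p$ for at least one $\zeta_0$. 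Then $B:=\zeta_0B_0$ satisfies $B^p=\zeta_0^{\,p}B_0^{\,p}=A$ and $v_p(B-C)=v_p(\zeta_0B_0-C)\geq\frac1p$, which is what was required (the degenerate case $A=0$, where $C=B_0=0$, is covered verbatim). The only real bookkeeping hazard is the estimate of $C^p-A$ — keeping the sum over $\Supp{A}\cap[0,1)$ cleanly separated from the tail $\{g\in\Supp{A}:g\geq1\}$ and invoking \Cref{lem:16960} under the hypothesis $\Supp{C}\subseteq[0,\frac1p)$ — while conceptually the key move is the factorization, which sidesteps the failing Hensel step and selects the correct one among the $p$ different $p$-th roots of $A$.
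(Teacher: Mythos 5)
Your proof is correct and follows essentially the same route as the paper: the same leading-term candidate $C$, the same appeal to \Cref{lem:16960} to obtain $v_p(C^p-A)\geq 1$, and then the existence of an exact $p$-th root within distance $p^{1/p}$ of $C$. The only difference is cosmetic: where you factor $C^p-B_0^{\,p}=\prod_{\zeta\in\mu_p}(C-\zeta B_0)$ and pigeonhole on valuations, the paper reads off the same conclusion from the Newton polygon of $(T+C)^p-A$, whose maximal slope is at least $\frac{1}{p}$ because the constant term has valuation $\geq 1$ while the leading coefficient is a unit.
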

\begin{proof}
    Let $B_0=\sum_{g\in\Supp{A}\cap [0,1)}[a_g^{\frac{1}{p}}]p^{\frac{g}{p}}$. We claim that $B_0$ is an approximation of a $p$-th root $A^{\frac{1}{p}}$ of $A$ in $\bbL_p$.

    Consider the polynomial $f(T)=\left(T+B_0\right)^p-A\in \bbL_p[T]$. We estimate the $p$-adic valuation of the coefficients of $f(T)$. For $1\leq i\leq p-1$, we have $v_p\left(\binom{p}{i}B_0^{p-i}\right)\geq 1$.
    By \Cref{prop:16960}, one has
    $$B_0^p=\sum_{g\in\Supp{A}\cap[0,1)}[a_g]p^g+O\left(p^{1+v_p(B_0)}\right),$$
    and consequently $v_p\left(B_0^p-A\right)\geq 1$. Thus, except that the coefficient of leading term of $f(T)$ has valuation $0$, the coefficients of other terms in $f(T)$ are of $p$-adic valuation at least $1$. As a result, the maximal slope of $\Newt{f}$ is at least $\frac{1}{p}$. Therefore there exists a root $U\in\bbL_p$ of $f(T)$ with $v_p(U)\geq \frac{1}{p}$. Set $B=B_0+U$. Then we have $B^p=A$ and
    $B=\sum_{g\in\Supp{A}\cap [0,1)}[a_g^{\frac{1}{p}}]p^{\frac{g}{p}}+O\left(p^{\frac{1}{p}}\right).$

\end{proof}
Form this lemma and the $\frac{2}{p-1}$-truncated expansion of $\zeta_{p^2}$ (cf. \Cref{mainexpansion}), we can deduce the $\frac{2}{p^{n-2}(p-1)}$-truncated expansion of $\zeta_{p^n}$:
\begin{proposition}\label{truncatedfinal}
    For $n\geq 2$, we have the following $\frac{2}{p^{n-2}(p-1)}$-truncated expansion of $\zeta_{p^n}$:
    \begin{align*}
        \zeta_{p^n}= & \sum_{k=0}^{p-1}\frac{(-1)^{nk}}{k!}\zetax^k p^{\frac{k}{p^{n-1}(p-1)}}+\sum_{k=0}^{p-1}\frac{(-1)^{n(k+1)}}{k!}\zetax^{k+1}p^{\frac{k+p}{p^{n-1}(p-1)}}\sigma_n \\
                     & \quad-\sum_{k=1}^{p-1}\frac{H_k}{k!}(-1)^{n(k+1)}\zetax^{k+1} p^{\frac{k+p}{p^{n-1}(p-1)}}                                                                       \\
                     & \quad+\frac{1}{2}\zetax^2 p^{\frac{2}{p^{n-2}(p-1)}}\sigma_n^2+\frac{(-1)^n}{2}\zetax^3 p^{\frac{2}{p^{n-2}(p-1)}-\frac{p-2}{p^n(p-1)}}                          \\
                     & \quad+O\left(p^{\frac{2}{p^{n-2}(p-1)}}\right).
    \end{align*}
\end{proposition}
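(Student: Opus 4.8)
The plan is to realise a primitive $p^{n}$-th root of unity $\zeta_{p^{n}}$ as an iterated $p$-th root of $\zeta_{p^{2}}$ and to follow how the explicit expansion of \Cref{mainexpansion} is transformed by each application of \Cref{monter}; the case $n=2$ is \Cref{mainexpansion} itself, so assume $n\geq 3$. First I would multiply out the product in \Cref{mainexpansion} to write
\begin{align*}
\zeta_{p^{2}}={}&\sum_{k=0}^{p-1}\frac{\zetax^{k}}{k!}p^{\frac{k}{p(p-1)}}+\sum_{k=0}^{p-1}\frac{\zetax^{k+1}}{k!}p^{\frac{k+p}{p(p-1)}}\sigma_{2}-\sum_{k=1}^{p-1}\frac{H_{k}}{k!}\zetax^{k+1}p^{\frac{k+p}{p(p-1)}}\\
&\quad+\frac{1}{2}\zetax^{2}p^{\frac{2}{p-1}}\sigma_{2}^{2}+\frac{1}{2}\zetax^{3}p^{\frac{2}{p-1}-\frac{p-2}{p^{2}(p-1)}}+O\!\left(p^{\frac{2}{p-1}}\right).
\end{align*}
Every monomial displayed here has $p$-adic valuation lying in $\left[0,\tfrac{2}{p-1}\right)\subseteq[0,1)$ for $p\geq 3$: the finite monomials sit strictly below $\tfrac{2}{p-1}$, while each of the three $\sigma_{2}$-factors contributes a strictly negative amount, so even in the boundary case $p=3$ (where $\tfrac{2}{p-1}=1$) every valuation stays strictly below $1$. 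Hence $\zeta_{p^{2}}\in\calO_{\bbL_{p}}$, and, after replacing each coefficient by the Teichmuller lift of its residue modulo $p$, the right-hand side above is exactly the part below $\tfrac{2}{p-1}$ of the $1$-truncated canonical expansion of $\zeta_{p^{2}}$.

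Since $\zeta_{p^{n}}^{p^{n-2}}$ is a primitive $p^{2}$-th root of unity, I would build $\zeta_{p^{n}}$ by applying \Cref{monter} exactly $n-2$ times: for $0\leq j\leq n-2$ let $\zeta_{p^{2+j}}$ be the primitive $p^{2+j}$-th root of unity produced by $j$ successive $p$-th root extractions via \Cref{monter}, starting from $\zeta_{p^{2}}$. Each $\zeta_{p^{2+j}}$ is a root of unity, hence lies in $\calO_{\bbL_{p}}$ and has a $1$-truncated canonical expansion, so \Cref{monter} applies at every step. I would then show, by induction on $j$, that the part below $\tfrac{2}{p^{j}(p-1)}$ of the canonical expansion of $\zeta_{p^{2+j}}$ is obtained from the displayed expansion of $\zeta_{p^{2}}$ by the substitutions $p^{g}\mapsto p^{g/p^{j}}$, $\zetax\mapsto(-1)^{j}\zetax$ and $\sigma_{2}\mapsto\sigma_{2+j}$ (the rational coefficients being unchanged), with error $O\!\left(p^{2/(p^{j}(p-1))}\right)$. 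The base case $j=0$ is the previous paragraph. For the inductive step, \Cref{monter} says that the $\tfrac{1}{p}$-truncated canonical expansion of $\zeta_{p^{2+j+1}}$ is obtained from that of $\zeta_{p^{2+j}}$ by applying the $p$-th-root map (the inverse of Frobenius on $\bar{\bbF}_{p}$, in particular additive) to each Teichmuller coefficient and dividing each exponent by $p$; since $\tfrac{2}{p^{j+1}(p-1)}\leq\tfrac{1}{p}$ for all $j\geq 0$ (because $p^{j+1}(p-1)\geq p(p-1)\geq 2p$ when $p\geq 3$), the part below $\tfrac{2}{p^{j+1}(p-1)}$ of $\zeta_{p^{2+j+1}}$ is read off from the part below $\tfrac{2}{p^{j}(p-1)}$ of $\zeta_{p^{2+j}}$, which is known by the inductive hypothesis.

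It then remains to identify the effect of one such step on the individual factors. The rational coefficients $\tfrac{1}{2},\tfrac{1}{k!},\tfrac{H_{k}}{k!}$ are fixed, since $x\mapsto x^{1/p}$ is the identity on $\bbF_{p}$; one has $\sigma_{m}=\sum_{k\geq m}p^{-1/p^{k}}\mapsto\sum_{k\geq m}p^{-1/p^{k+1}}=\sigma_{m+1}$ (and \Cref{lemma:12551} confirms this term-by-term $p$-th root is a genuine $p$-th root up to the relevant error); and, crucially, $\zetax^{1/p}=-\zetax$. Indeed, in $\bar{\bbF}_{p}$ the element $\zetax^{p-1}$ has order $2$ and hence equals $-1$, while $p^{2}\equiv 1\pmod{2(p-1)}$ --- because $2(p-1)\mid(p-1)^{2}$, so $p^{2}=(p-1)^{2}+(2p-1)\equiv 2p-1\equiv 1$ --- whence $\zetax^{1/p}=\zetax^{p}=\zetax\cdot\zetax^{p-1}=-\zetax$. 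Composing $n-2$ steps and setting $j=n-2$, using $(-1)^{(n-2)k}=(-1)^{nk}$, $(-1)^{(n-2)(k+1)}=(-1)^{n(k+1)}$, $(-1)^{2(n-2)}=1$, $(-1)^{3(n-2)}=(-1)^{n}$, together with the exponent rescalings $\tfrac{k}{p(p-1)}\mapsto\tfrac{k}{p^{n-1}(p-1)}$, $\tfrac{k+p}{p(p-1)}\mapsto\tfrac{k+p}{p^{n-1}(p-1)}$, $\tfrac{2}{p-1}\mapsto\tfrac{2}{p^{n-2}(p-1)}$ and $\tfrac{2}{p-1}-\tfrac{p-2}{p^{2}(p-1)}\mapsto\tfrac{2}{p^{n-2}(p-1)}-\tfrac{p-2}{p^{n}(p-1)}$, one recovers exactly the asserted expansion of $\zeta_{p^{n}}$.

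The conceptual content is light. The only genuinely substantive ingredient is the identity $\zetax^{1/p}=-\zetax$, which produces the entire sign pattern $(-1)^{nk}$, $(-1)^{n(k+1)}$, $(-1)^{n}$; everything else is bookkeeping --- composing the substitutions, noting the collapse $(-1)^{(n-2)\bullet}=(-1)^{n\bullet}$, and checking that the truncation position contracts by exactly the factor $1/p$ at each step so that the explicitly known terms are never absorbed into the error (this is where $p\geq 3$, and at $p=3$ the boundary equality $\tfrac{2}{p-1}=1$, is used). No strengthening of \Cref{monter} beyond its stated form is required: at each step one feeds it the full root of unity --- automatically in $\calO_{\bbL_{p}}$ with a $1$-truncated canonical expansion --- and only reads off a truncation of the output below $\tfrac{1}{p}$.
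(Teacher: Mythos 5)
Your proposal is correct and follows exactly the route the paper intends: the paper derives \Cref{truncatedfinal} from \Cref{mainexpansion} by iterating the $p$-th-root extraction of \Cref{monter}, and your write-up supplies the details (the sign computation $\zetax^{1/p}=-\zetax$, the substitutions $\sigma_m\mapsto\sigma_{m+1}$ and $p^g\mapsto p^{g/p}$, the collapse $(-1)^{(n-2)\bullet}=(-1)^{n\bullet}$, and the check that $\tfrac{2}{p^{j+1}(p-1)}\leq\tfrac{1}{p}$ so the truncation survives each step) that the paper leaves implicit.
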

\begin{remark}
    We obtain the $\frac{2}{p^{n-2}(p-1)}$-truncated \textbf{canonical} expansion of $\zeta_{p^n}$ from \Cref{truncatedfinal} immediately by expanding the product and replacing the coefficients by their Teichmuller lifting. This is based on the fact :
    $$[k]=\begin{cases}
            k,      & \text{ if }k=0,1;   \\
            k+O(p), & \text{ if }k\geq 2.
        \end{cases}$$
    Taking account of the support of $\zeta_{p^n}$, we know that in general\footnote{There exists some exceptional primes such that the assertion is not true.}, for an odd prime $p$,  taking off or adding on the Teichmuller lifting does not affect the correctness of the truncated expansion if and only if the truncated position is not greater than $1+\frac{2}{p^{n-1}(p-1)}$. Thus, the Teichmuller lifting in the calculation may not be neglectable anymore for $r\geq 1+\frac{2}{p^{n-1}(p-1)}$. For example, $\widehat{\Lambda}_{p-1}$ is no longer a truncated expansion of $\Lambda_{p-1}$ when the truncated position is greater than $1+\frac{2}{p(p-1)}$. On the other hand, since combinatorial techniques are our primary tools, the truncated (noncanonical) expansion without the Teichmuller liftings is more convenient for us.
\end{remark}
\printbibliography
\end{document}